\newcommand{\set}[1]{\mathchoice%
  {\left\lbrace #1 \right\rbrace}%
  {\lbrace #1 \rbrace}%
  {\lbrace #1 \rbrace}%
  {\lbrace #1 \rbrace}%
}
\newcommand{\paren}[1]{\mathchoice%
  {\left( #1 \right)}%
  {( #1 )}%
  {( #1 )}%
  {( #1 )}%
}
\newcommand{\abs}[1]{\mathchoice%
  {\left\lvert #1 \right\rvert}%
  {\lvert #1 \rvert}%
  {\lvert #1 \rvert}%
  {\lvert #1 \rvert}%
}
\newcommand{\union}{\cup}
\newcommand{\cross}{\times}
\newcommand{\numset}[1]{\mathbb{#1}}
\newcommand{\N}{\numset{N}}
\newcommand{\Z}{\numset{Z}}
\newcommand{\R}{\numset{R}}
\newcommand{\bdy}{\partial}
\newcommand{\connsum}{\mathbin{\sharp}}
\DeclareMathOperator{\tb}{tb}
\DeclareMathOperator{\rot}{r}
\newcommand*{\dfn}[1]{\textbf{#1}}
\newcommand{\disjunion}{\sqcup}
\newcommand{\FF}{\ensuremath{\mathbb{F}}}
\newcommand*{\leg}{\Lambda}
\DeclareMathOperator{\surg}{Surg}
\newcommand*{\aug}{\varepsilon}
\newcommand*{\poly}{\ensuremath{\mathcal{P}}}
\newcommand*{\alg}{\ensuremath{\mathcal{A}}}
\newcommand*{\qconc}{\approx}
\newcommand*{\qcob}{\sim}
\newcommand*{\qconce}{\stackrel{\aug}{\approx}}
\newcommand*{\qcobe}{\stackrel{\aug}{\sim}}
\newcommand*{\qconcd}{\qconc_{\mathrm{d}}}
\newcommand*{\legjoin}{\mathbf{L}}
\newcommand*{\qcobordism}{\mathbb{L}}
\newcommand*{\graph}{\Gamma}
\newcommand*{\handles}{\mathcal{H}}
\newcommand*{\vertices}{\mathcal{V}}
\newcommand*{\cusps}{\mathcal{C}}
\newcommand*{\maxunknot}{\Upsilon}
\newcommand*{\gd}{\mathbb{G}}
\newcommand*{\LagGenus}{g_{\mathrm{L}}}
\newcommand*{\ConcGrp}{\mathcal{C}}
\newcommand*{\LagCobRel}{\prec}
\newcommand*{\LZC}{\mathcal{LZC}}
\DeclareMathOperator{\LCH}{LCH}
\newcommand*{\concat}{\mathbin{\odot}}
\DeclareMathOperator{\Wh}{Wh}
\newcommand*{\taxicab}{d_{\mathrm{t}}}
\newcommand*{\loss}{\mathcal{L}}
\newcommand*{\lossh}{\widehat{\loss}}
\DeclareMathOperator{\HFK}{HFK}
\newcommand*{\HFKh}{\widehat{\HFK}}
\newcommand*{\arcindex}{\alpha}
\title{Lagrangian Zigzag Cobordisms}
\author[J.\ M.\ Sabloff]{Joshua M. Sabloff}
\address{Department of Mathematics and Statistics \\ Haverford College \\  
  Haverford, PA 19041}
\email{\href{mailto:jsabloff@haverford.edu}{jsabloff@haverford.edu}}
\urladdr{\url{https://jsabloff.sites.haverford.edu}}
\author[D.\ S.\ Vela-Vick]{David Shea Vela-Vick}
\address{Department of Mathematics \\ Louisiana State University \\ Baton 
  Rouge, LA 70803}
\email{\href{mailto:shea@math.lsu.edu}{shea@math.lsu.edu}}
\urladdr{\url{https://www.math.lsu.edu/~shea/}}
\author[C.-M.\ M.\ Wong]{C.-M. Michael Wong}
\address{Department of Mathematics and Statistics \\ University of Ottawa \\  
  Ottawa, ON K2P 1C8}
\email{\href{mailto:Mike.Wong@uOttawa.ca}{Mike.Wong@uOttawa.ca}}
\urladdr{\url{https://mysite.science.uottawa.ca/cwong/}}
\author[A.\ Wu]{Angela Wu}
\address{Department of Mathematics \\ Louisiana State University \\ Baton 
  Rouge, LA 70803}
\email{\href{mailto:awu@lsu.edu}{awu@lsu.edu}}
\urladdr{\url{https://www.math.lsu.edu/~awu/}}
\keywords{Legendrian links, Lagrangian cobordisms}
\subjclass[2020]{53D12, 57K33; 57K10, 57R90}
\begin{document}

\normalem

\begin{abstract}
We investigate an equivalence relation on Legendrian knots in the standard contact
three-space defined by the existence of an interpolating zigzag of Lagrangian 
cobordisms. We compare this relation, restricted to genus-$0$ surfaces, to smooth 
concordance and Lagrangian concordance. We then study the metric monoid formed 
by the set of Lagrangian zigzag concordance classes, which parallels the metric 
group formed by the set of smooth concordance classes, proving structural 
results on torsion and satellite operators.  Finally, we discuss the relation 
of Lagrangian zigzag cobordism to non-classical invariants of Legendrian knots.
\end{abstract}

\maketitle

%For notes / markup:
%\js{Josh}, \sv{Shea}, \mw{Mike}, \aw{Angela}.  You can change your color in \texttt{commands-local.tex}.

%\js{Table of contents is here temporarily to help us see document structure; the paper is too short to have one when we're done.}
%\tableofcontents

% !TEX root = lqc.tex

% ****************************************
% Introductory Section for LZC Paper
% ****************************************

%%%%% Version 3

% ********************
\section{Introduction}
\label{sec:intro}

% **********
\subsection{Context and aims}
\label{ssec:context-aims}

An essential framework in the study of contact and symplectic topology is to 
explore the boundary between flexibility and rigidity.  A prominent locus for 
this study involves questions about the existence, (non-)uniqueness, and 
structure of exact, orientable Lagrangian concordances between Legendrian 
knots.  Evidence for rigidity arises from the development of obstructions 
beyond the topological to the existence of such cobordisms 
\cite{BalLidWon22:LagCobHFK, BalSiv18:KHMLeg, EkhHonKal16:LagCob, 
  GolJuh19:LOSSConc, Pan17:LagCobAug, SabTra13:LagCobObstructions} and the 
discovery of many symplectically distinct but smoothly isotopic Lagrangian 
concordances \cite{CasGao22:InfLag, CasNg22:InfLag, Kal05:LCHpi1}.  Of 
particular importance for this paper is the structural fact that the relation on the set of Legendrian knots
defined by Lagrangian concordance is reflexive and transitive, but not 
symmetric \cite{Cha10:LagConc, Cha15:LagConcNotSym}, though whether Lagrangian 
concordance yields a partial order is still an open question.  This suggests 
that Lagrangian concordance is more akin to smooth ribbon concordance, which 
was recently shown by Agol to induce a partial order 
\cite{Ago22:RibConcPartialOrder}, than to ordinary smooth concordance, which 
yields an equivalence relation.

The goal of this paper is to introduce a new relation we term \dfn{Lagrangian 
  zigzag concordance} which lies between smooth and Lagrangian concordance, and 
to explore its structure and whether it still captures symplectic rigidity.  
Following \cite{SabVelWon21:MaxLeg}, we say that Legendrians $\leg$ and $\leg'$ 
are \dfn{Lagrangian zigzag concordant} if they are related by a sequence of 
connected Lagrangian concordances as in the following zigzag diagram:
%\mw{subscripts? superscripts? $+$. consistency}
\begin{equation} \label{eq:zz}
\begin{tikzcd}[column sep=small, row sep=small]
  & \leg_{+,1} && && \leg_{+,n} & \\
	\leg = \leg_0 \ar[ur,"L^<_1"] && \leg_1 \ar[ul, "L_1^>"'] & \cdots & \leg_{n-1} \ar[ur,"L^<_n"] && \leg_n  = \leg' \ar[ul,"L^>_n"']
\end{tikzcd}
\end{equation}	See \fullref{sec:lzc} for a precise definition.  We note that 
Sarkar has defined a similar notion for ribbon concordance \cite{Sar20:RibKh}.

The notions of zigzag cobordism and concordance allow us to pursue a closer 
analogy between contact and smooth knot theory.  To begin, the Lagrangian 
zigzag concordance relation is an equivalence relation that refines the smooth 
concordance relation. Similarly to smooth knots, any two Legendrians with the 
same rotation number are related by zigzag cobordism \cite{SabVelWon21:MaxLeg}.  
Thus, we may define a metric on the set $\LZC$ of zigzag concordance classes of 
Legendrians given by the minimum total genus $\LagGenus$ over all Lagrangian 
zigzag cobordisms (setting the distance between two Legendrians to be $\infty$ 
if they have different rotation numbers), which parallels the smooth 
concordance genus $g_4$ on the smooth concordance group $\ConcGrp$.

The motivating questions for this paper may now come into sharper focus:  
First, how does the zigzag concordance relation refine the smooth concordance 
relation, and how do their structures compare?  We can ask about both the 
metric structure and the algebraic structure of the relation. For example, what 
is the diameter of a component of $\LZC$ that corresponds to a fixed rotation 
number? Can we find interesting quasi-isometries of $\LZC$? Does connected sum 
induce a group structure on the set of zigzag concordance classes?  Second, 
does the additional flexibility of the Lagrangian zigzag concordance relation 
still allow for symplectic rigidity?  In particular, how does the zigzag 
concordance relation interact with existing non-classical invariants?  We 
enumerate our efforts to answer these questions in the results detailed below.

While many (but not necessarily all) statements in the article hold for general 
$(M, \alpha)$ with essentially the same proof, for ease of exposition, we focus 
on $(\R^3, \mathrm{d}z - y \mathrm{d}x)$ throughout.

% **********
\subsection{Comparisons}
\label{ssec:comparisons}

We first study the differences between smooth concordance, Lagrangian 
zigzag concordance, and Lagrangian concordance. It has already been shown that 
smooth and Lagrangian zigzag concordance do not coincide 
\cite[Example~6.6]{SabVelWon21:MaxLeg}. Just how similar these notions are, 
however, can be seen if we drop the assumption that the individual cobordisms 
in \eqref{eq:zz} are connected (even if the total cobordism is still required 
to be a cylinder).  We call such an object a \dfn{disconnected Lagrangian 
  zigzag-concordance}. It turns out that this notion coincides with smooth 
concordance:

\begin{theorem}
  \label{thm:disconnected-lqc}
  The Legendrian knots $\leg$ and $\leg'$ are smoothly concordant if and only 
  if they are disconnected Lagrangian zigzag concordant.
\end{theorem}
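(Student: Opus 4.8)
The reverse implication is essentially a tautology: forgetting the symplectic structure turns each of the Lagrangian cobordisms $L^<_i$ and $L^>_i \subset \R \times \R^3$ into a smooth cobordism between the smooth knot (or link) types of its ends, and concatenating these along \eqref{eq:zz} yields a smooth cobordism between the smooth types of $\leg$ and $\leg'$ which, by the definition of a disconnected Lagrangian zigzag concordance, is a cylinder. Hence $\leg$ and $\leg'$ are smoothly concordant.

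For the forward implication, let $C$ be a smooth concordance from $\leg$ to $\leg'$. First I would place $C$ in normal form: by a standard rearrangement of handles (a self-indexing Morse function on $C$ rel $\bdy$), the associated movie may be taken, after a smooth ambient isotopy, to consist of a sequence of births of disjoint unknots, then a sequence of oriented saddle (band) moves, then a sequence of deaths of disjoint unknots, with smooth link isotopies in between; equivalently $C$ is the concatenation of a cobordism from $\leg$ to $\leg' \disjunion \unknot^{\disjunion d}$ built using only births and saddles with the cobordism capping off the $d$ split unknots. Inserting cancelling birth--death and saddle pairs, I keep the freedom to impose convenient numerical constraints on the handle counts.

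Next I would realize each elementary piece Lagrangianly and assemble them into the shape \eqref{eq:zz}. A birth is realized by disjoint union with a \emph{minimum}, the Lagrangian disk filling the standard Legendrian unknot $\unknot$, attached going upward; this is exactly where \emph{disconnectedness} of the pieces is used, and it explains why the connected notion \eqref{eq:zz} alone is strictly finer. A saddle move is realized by a Legendrian \emph{pinch} (ambient surgery) move: after a Legendrian isotopy placing the two feet of the band on parallel strands of the front in standard position, the band move is the reverse of a pinch, hence realized by a Lagrangian saddle cobordism traversed upward, while reading the same cobordism downward realizes the reverse band move; threading these into the ascending and descending slopes of a few tents realizes all the saddles. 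A death is realized dually, by disjoint union with a minimum of $\unknot$ traversed \emph{downward} --- a Lagrangian maximum is forbidden, but a minimum read top-to-bottom is precisely a death --- and these pieces are again disconnected. Concatenating everything in the up--down pattern of \eqref{eq:zz}, padded with trivial cylindrical tents, gives a zigzag whose underlying smooth cobordism is isotopic to $C$, hence a cylinder, as required.

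I expect the main obstacle to be the handling of the saddles: one must verify that every oriented saddle move on a Legendrian link is realizable, up to Legendrian isotopy, as a pinch (equivalently, as its reverse), and --- more delicately --- that the directions of the pinches and the placement of the interspersed births and deaths can be arranged so that the construction never calls for a forbidden move (a Lagrangian maximum, or a Legendrian destabilization) and terminates at the prescribed Legendrian $\leg'$, not merely at some Legendrian of the same smooth type. The first point is the standard Legendrian ambient-surgery toolkit. The second is bookkeeping with the classical invariants: along the zigzag a birth lowers the total Thurston--Bennequin number by $1$, a death raises it by $1$, and each (un)pinch changes it by $\pm 1$, while none of these moves changes the rotation number --- so the construction preserves $\rot$, and $\tb$ can be steered to its target by choosing the directions of the saddles and, if needed, enlarging the handle counts, with the finer distribution of classical invariants among the intermediate components controlled by the Legendrian isotopies preceding each pinch.
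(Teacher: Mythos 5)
Your outline follows the same broad strategy as the paper (realize births by Lagrangian minima, deaths by minima traversed downward through a cospan, saddles by pinches), but there is a genuine gap exactly where you defer to ``bookkeeping with the classical invariants.'' You never supply a mechanism for changing the Legendrian representative of an intermediate link within its smooth isotopy class, and your construction needs this constantly: to put the feet of each band into standard position, to turn a component into a split maximal unknot before it can die, and above all to terminate at the prescribed $\leg'$ rather than at some Legendrian of the same smooth type. Your proposed remedies cannot do this: Legendrian isotopies preserve the componentwise classical invariants, so ``the finer distribution of classical invariants among the intermediate components controlled by the Legendrian isotopies preceding each pinch'' is not available, and the bands are dictated by the concordance, so the ``directions'' of the saddles are not free parameters for steering $\tb$. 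The missing idea is the paper's \fullref{lem:double-stab}: a Legendrian is disconnected zigzag concordant to its double stabilization $S_{+-}(\leg)$, via a cospan one of whose legs is a pinch and the other a \emph{disconnected} cylinder-plus-cancelling-$0$-handle (so disconnectedness is essential here, not only at births and deaths). Combined with Fuchs--Tabachnikov, this gives \fullref{cor:disconnected-smooth-isotopy}: any two smoothly isotopic Legendrian links whose corresponding components have equal rotation numbers are $\qconcd$-equivalent. That makes $\tb$ a complete non-issue and is the engine that lets each elementary step, and the final identification with $\leg'$, go through; your insertion of cancelling handle pairs gestures toward it but is never developed into this statement.

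The second gap is the rotation-number bookkeeping, which is componentwise and cannot be dismissed with ``none of these moves changes the rotation number.'' A death can only cap a component of rotation number $0$, a birth creates a rot-$0$ component, and a saddle adds the rotation numbers of the components it merges; since no move in your toolkit can ever alter the rotation number of an individual component, the distribution of rotation numbers over all components of all intermediate levels must be arranged coherently in advance, compatibly with both the dying components and the target $\rot(\leg')$. The paper does this by choosing an embedded path $\gamma$ in the concordance from $\leg$ to $\leg'$ meeting each homologically essential component of each level exactly once, and running the induction only through ``$\gamma$-compatible'' Legendrian representatives (rotation number $\pm r$ on components crossed by $\gamma$, $0$ elsewhere); this is precisely what guarantees that every capped component has rotation number $0$ and that \fullref{cor:disconnected-smooth-isotopy} applies at the last step. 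Without an argument of this kind your plan need not terminate at $\leg'$.
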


On the other side, we show that Lagrangian zigzag concordance is coarser than Lagrangian 
concordance:

\begin{theorem}
  \label{thm:lqc-not-c}
  There exist a pair of Legendrian knots $\leg$ and $\leg'$ that are Lagrangian 
  zigzag concordant but not Lagrangian concordant.
\end{theorem}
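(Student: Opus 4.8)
The plan is to exhibit an explicit pair $\leg, \leg'$ by starting from a Legendrian knot type for which Lagrangian concordance is known to be asymmetric, then using the flexibility built into the zigzag relation to link the two directions. First I would choose a smooth knot type $K$ admitting two Legendrian representatives $\leg$ and $\leg'$ such that there is a Lagrangian concordance $\leg \LagCobRel \leg'$ but no Lagrangian concordance $\leg' \LagCobRel \leg$; such examples exist in the literature (e.g.\ from Chantraine's asymmetry results \cite{Cha15:LagConcNotSym}, or the $m(9_{46})$-type examples, or stabilization phenomena), and one must also arrange that $\leg$ and $\leg'$ have the \emph{same} rotation number so that the zigzag distance is finite. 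Given the one-directional concordance $L \colon \leg \prec \leg'$, the key observation is that a single Lagrangian concordance is in particular an instance of the zigzag in \eqref{eq:zz} (take $n=1$, $\leg_{+,1} = \leg'$, $L_1^< = L$, and $L_1^>$ a trivial cylinder on $\leg'$), so $\leg$ and $\leg'$ are automatically Lagrangian zigzag concordant. Meanwhile, by construction they are not Lagrangian concordant in the reverse direction; but Theorem 1.4 only asks that they not be Lagrangian concordant at all, so I should instead pick the pair so that \emph{neither} $\leg \prec \leg'$ nor $\leg' \prec \leg$ holds, while still being zigzag concordant.

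Concretely, the cleaner route is: take a Legendrian $\leg$ and its \dfn{positive stabilization-destabilization partner} or, better, two Legendrians $\leg, \leg'$ that both sit as ``top ends'' over a common Legendrian $\leg_0$ via connected Lagrangian concordances $\leg_0 \prec \leg$ and $\leg_0 \prec \leg'$ — then $\leg \qcobe \leg'$ holds by a length-two zigzag through $\leg_0$ — yet choose the Legendrians so that there is no Lagrangian concordance in either direction between $\leg$ and $\leg'$ directly. For the obstruction, I would invoke one of the non-topological obstructions to Lagrangian concordance cited in the introduction (for instance the behavior of the augmentation variety / linearized contact homology under concordance \cite{EkhHonKal16:LagCob, Pan17:LagCobAug}, or the DGA-map constraint, or the $\tb$/ruling constraints): a Lagrangian concordance $\leg \prec \leg'$ forces an induced map on augmentations and in particular constrains $\tb$ and the set of linearized homologies, and one can select $\leg, \leg'$ whose invariants are incompatible in both directions while each is still compatible with sitting above a common $\leg_0$. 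The asymmetry example of \cite{Cha15:LagConcNotSym} already packages exactly this kind of obstruction.

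The main obstacle I expect is the bookkeeping of rotation numbers and the simultaneous two-sided obstruction: it is easy to get a one-directional Lagrangian concordance and hence a zigzag, but to rule out Lagrangian concordance in \emph{both} directions one needs the chosen invariants (augmentation numbers, linearized homology, ruling polynomials, or the filtered invariants from \cite{BalLidWon22:LagCobHFK}) to obstruct both $\leg \prec \leg'$ and $\leg' \prec \leg$, and one must check that these same invariants do not also obstruct the two connecting concordances $\leg_0 \prec \leg$, $\leg_0 \prec \leg'$ in the zigzag. So the delicate step is a compatible choice of the three Legendrians $\leg_0, \leg, \leg'$ — I would likely take $\leg_0$ to be a maximal-$\tb$ Legendrian (say a Legendrian unknot or a standard Legendrian representative of the relevant knot type) and obtain $\leg, \leg'$ from $\leg_0$ by pinch moves / saddle cobordisms realizing two different but rotation-number-matched knot types, then verify via \cite{Cha15:LagConcNotSym}-style obstructions that no direct Lagrangian concordance between $\leg$ and $\leg'$ exists. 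Once the example is fixed, the remaining verifications (that the connecting cobordisms are genuinely connected Lagrangian concordances, and that the obstruction computations are as claimed) are routine applications of the cited tools.
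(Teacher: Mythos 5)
Your construction of the zigzag is exactly the paper's: the paper takes $\leg$ and $\leg'$ to be the decomposably Lagrangian slice Legendrians of $m(9_{46})$ and $m(12n_{768})$ (\fullref{ex:zz-not-c}), both filled from the maximal unknot $\maxunknot$ by connected decomposable concordances, and pads with trivial cylinders to get a length-two zigzag through $\maxunknot$. Your worry that the obstructing invariants might also obstruct the connecting concordances $\maxunknot \LagCobRel \leg$, $\maxunknot \LagCobRel \leg'$ is moot, since those concordances are constructed explicitly; so the first half of your plan is fine and coincides with the paper.

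The gap is in the second half, which you defer to ``routine applications of the cited tools.'' That step is the heart of the proof, and as sketched it would likely stall: all the obstructions you name (ruling polynomials, pulled-back augmentations, Chantraine-style constraints) are one-directional in nature, and for a pair of max-$\tb$, rotation-zero, Lagrangian-slice knots with identical classical invariants you give no mechanism forcing the obstruction to fail in \emph{both} directions simultaneously; in particular the paper does not (and apparently cannot easily) run the obstruction directly on $\leg$ and $\leg'$. The missing idea is the satellite trick: a concordance in either direction between $\leg$ and $\leg'$ would, by \fullref{thm:satellite-concordance}, induce one between the $\tb$-twisted Whitehead doubles $\Wh(\leg)$ and $\Wh(\leg')$, whose ruling polynomials are computed to be $5+3z^2$ and $2+3z^2+3z^4+z^6$; Pan's inequality (\fullref{thm:ruling-ineq}) evaluated at the two prime powers $q=2$ and $q=3$ then gives strict inequalities in \emph{opposite} directions, ruling out both $\Wh(\leg) \LagCobRel \Wh(\leg')$ and $\Wh(\leg') \LagCobRel \Wh(\leg)$, hence both directions for $\leg, \leg'$. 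Without some such device --- passing to a satellite and exploiting the freedom to evaluate the ruling polynomial at several points, or an invariant with genuinely two-sided strength --- the ``delicate step'' you flag remains unproved, so the proposal as written has a genuine gap precisely there.
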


% **********
\subsection{Structural results}
\label{ssec:structural}

We next consider the metric properties of $\LZC$.  Thinking of $\LZC$ as a 
graph weighted by $\LagGenus$, we begin by investigating its basic properties:

\begin{proposition} \label{prop:graph-structure}
  The graph $\LZC$ has exactly one connected component for each rotation 
  number. Each component has infinite diameter.  The $1$-link of each vertex of 
  $\LZC$ is countably infinite.
\end{proposition}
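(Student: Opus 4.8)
The plan is to establish the three assertions separately, using throughout Chantraine's relation $\tb(\legp)-\tb(\legm)=-\chi(L)$ for a connected exact Lagrangian cobordism $L$, the invariance of $\rot$ under such cobordisms \cite{Cha10:LagConc}, \fullref{thm:disconnected-lqc}, the connectivity theorem of \cite{SabVelWon21:MaxLeg}, and the standard ambient connected-sum operation on Lagrangian cobordisms. For the component count, note first that $\rot$ and $\tb$ descend to invariants of a vertex of $\LZC$: each is unchanged along a connected Lagrangian concordance, hence along any Lagrangian zigzag concordance. Since $\rot$ is moreover unchanged by every connected Lagrangian cobordism, it is constant on each connected component, so there is a well-defined rotation number of each component; this assignment is onto $\Z$ because every integer is $\rot(\leg)$ for some Legendrian knot (for instance, an iterated positive or negative stabilization of a maximal-$\tb$ Legendrian unknot), and it is injective by \cite{SabVelWon21:MaxLeg}. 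Hence the components are indexed by $\Z$.

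For the diameter I would apply Chantraine's formula piece-by-piece along a zigzag as in \eqref{eq:zz}: if $L^<_i$ and $L^>_i$ have genera $g^<_i$ and $g^>_i$, then $\tb(\leg_i)-\tb(\leg_{i-1})=2(g^<_i-g^>_i)$, so a genus-$g$ Lagrangian zigzag cobordism from $\leg$ to $\leg'$ forces $\abs{\tb(\leg)-\tb(\leg')}\le 2g$ upon summing over the pieces, whence $\LagGenus([\leg],[\leg'])\ge\tfrac12\abs{\tb(\leg)-\tb(\leg')}$. Taking $\leg_k$ to be a Legendrian obtained from $\leg$ by $k$ positive and $k$ negative stabilizations — which preserves $\rot$ but lowers $\tb$ by $2k$ — yields $\LagGenus([\leg],[\leg_k])\ge k$, so every component has infinite diameter. (Alternatively, a genus-$g$ zigzag cobordism produces a smooth genus-$\le g$ cobordism between the underlying knots, so $\LagGenus$ dominates their smooth $4$-genus cobordism distance, and one may instead use fixed-rotation-number Legendrian representatives of the torus knots $T(2,2k+1)$.)

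For the $1$-link, countability is immediate: there are only countably many Legendrian isotopy classes of links — carried by finite combinatorial data such as grid diagrams — hence only countably many vertices of $\LZC$. For the infinitude, fix a vertex $[\leg]$ and choose Legendrian knots $\mu_1,\mu_2,\dots$, each bounding a genus-one exact Lagrangian filling, whose underlying smooth knot types $K(\mu_n)$ are pairwise non-concordant (discarding one term so that none is slice). For each $n$, the ambient connected sum of the trivial Lagrangian cylinder on $\leg$ with a genus-one filling of $\mu_n$ is a genus-one Lagrangian cobordism from $\leg$ to $\leg\connsum\mu_n$, and $\rot(\leg\connsum\mu_n)=\rot(\leg)$ because a Lagrangian-fillable Legendrian has rotation number $0$; hence $\LagGenus([\leg],[\leg\connsum\mu_n])\le 1$. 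It is not $0$: were $\leg$ and $\leg\connsum\mu_n$ Lagrangian zigzag concordant, they would be smoothly concordant by \fullref{thm:disconnected-lqc}, forcing $K(\mu_n)$ slice. And the classes $[\leg\connsum\mu_n]$ are pairwise distinct, since zigzag concordance of $\leg\connsum\mu_n$ and $\leg\connsum\mu_m$ would imply, via \fullref{thm:disconnected-lqc} and cancellation in $\ConcGrp$, that $K(\mu_n)$ and $K(\mu_m)$ are concordant. Thus $\setc{[\leg\connsum\mu_n]}{n\ge1}$ is an infinite subset of the $1$-link of $[\leg]$, which is therefore countably infinite.

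The step that is not formal — the main obstacle — is the input to the last paragraph: producing infinitely many pairwise non-concordant knots that admit genus-one Lagrangian fillings. I would supply such a family from a known construction of Lagrangian fillings for a suitable family of genus-one Legendrian knots (for instance, (double-)twist knots assembled from positively clasped and positively twisted bands) together with the observation that the family's members are separated in the smooth concordance group by their Tristram--Levine signature functions, whose jumps occur at distinct points of the unit circle for distinct members. Everything else — Chantraine's $\tb$- and $\rot$-relations, the connected-sum operation for cobordisms, and the two reductions to smooth concordance via \fullref{thm:disconnected-lqc} and \cite{SabVelWon21:MaxLeg} — is routine bookkeeping.
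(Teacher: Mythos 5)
Your architecture is the same as the paper's: connectivity of components from the main theorem of \cite{SabVelWon21:MaxLeg}, infinite diameter from the Thurston--Bennequin lower bound $\LagGenus \geq \tfrac12\abs{\Delta\tb}$ (this is exactly \fullref{prop:zz-cob-classical}, which you may simply cite rather than re-derive; your re-derivation tacitly assumes every intermediate Legendrian is a knot), and a countably infinite $1$-link obtained by connect-summing $\leg$ with an infinite family of Legendrians that sit at Lagrangian genus $1$ from the unknot but are pairwise non-concordant smoothly, with distinctness pushed down to smooth concordance via \fullref{thm:disconnected-lqc}. Your diameter example (iterated double stabilizations, which drop $\tb$ by $2$ while fixing $\rot$) is a legitimate simplification; the paper instead sums with the maximal Legendrian $m(6_1)$, which costs nothing more and yields the stronger statement that $\LagGenus$ can be arbitrarily large while $g_4 = 0$, but for the bare proposition your choice suffices. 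Your genus-$1$ cobordism from $\leg$ to $\leg\connsum\mu_n$ is also fine once phrased carefully (e.g.\ take the disjoint union of the cylinder over $\leg$ with the filling of $\mu_n$ and attach one Lagrangian $1$-handle realizing the connected sum, or puncture the filling to get a cobordism from $\maxunknot$ and connect-sum cobordisms); the paper packages this as \cite[Lemma~6.7]{SabVelWon21:MaxLeg}.

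The genuine gap is the input you yourself flag: you never actually exhibit an infinite family of Legendrians admitting genus-one exact Lagrangian fillings whose smooth types are pairwise non-concordant. As written this is an assertion, and the family you gesture at needs care: fillability forces $\tb = 2g_4 - 1$ and $\rot = 0$, so only one chirality of each twist knot can possibly work, and for the \emph{negatively} clasped twist knots the Alexander roots are real, the Tristram--Levine signature has no unit-circle jumps, and your proposed signature argument collapses; it is only for the positively clasped family (trefoil, $m(5_2)$, \dots) that the roots lie on the unit circle at distinct points, and even there you must verify that these Legendrians really bound genus-one exact fillings (e.g.\ by exhibiting decomposable fillings from two $1$-handle attachments). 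The paper closes exactly this hole concretely in \fullref{prop:valence}: it takes the pretzel knots $P(-3,-5,-(2i+1))$, shows by an explicit front-diagram picture that each is obtained from $\maxunknot$ by two Lagrangian $1$-handles (hence $\LagGenus(\maxunknot,\leg_i')=1$), and distinguishes them in smooth concordance by a Fox--Milnor argument with their degree-two irreducible Alexander polynomials $(4i+6)t^2+(8i+11)t+(4i+6)$. So your plan is salvageable, but until you pin down and verify such a family (fillability with the correct chirality, plus a genuine concordance obstruction), the third assertion of the proposition is not proved.
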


We then study $\LZC$ as a monoid under connected sum, both proving results and 
raising questions. For example, in contrast to the smooth case, it is unknown 
whether any non-trivial Lagrangian zigzag concordance class has an inverse.  We 
open the topic by investigating the existence of torsion, proving a result that 
contrasts the case of $\ConcGrp$:

\begin{theorem}
\label{thm:amph-not-torsion}
  No non-trivial amphicheiral Legendrian knot is $2$-torsion in $\LZC$.
\end{theorem}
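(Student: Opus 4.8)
The plan is to reduce the statement to a constraint on the classical invariants, to squeeze the maximal Thurston--Bennequin number using amphichirality, and then to eliminate the one remaining case via the behavior of a Floer-theoretic Legendrian invariant under Lagrangian concordance. For the reduction, suppose toward a contradiction that the non-trivial amphicheiral Legendrian $\leg$, with underlying (amphicheiral) knot type $K$, is $2$-torsion in $\LZC$, so that $[\leg]\neq[\unknot]$ while $\leg\connsum\leg$ is Lagrangian zigzag concordant to $\unknot$. Each arrow in the defining zigzag \eqref{eq:zz} is a connected Lagrangian \emph{concordance}, hence a cylinder, so it preserves both $\tb$ and $\rot$ by Chantraine's theorem \cite{Cha10:LagConc}; chasing the zigzag shows that $\leg_0,\leg_{+,1},\dots,\leg_n$ all share a common value of $\tb$ and of $\rot$. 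In particular $\tb(\leg\connsum\leg)=\tb(\unknot)=-1$ and $\rot(\leg\connsum\leg)=\rot(\unknot)=0$, so from $\tb(\leg_1\connsum\leg_2)=\tb(\leg_1)+\tb(\leg_2)+1$ and additivity of $\rot$ we deduce $\tb(\leg)=-1$ and $\rot(\leg)=0$. Since the standard Legendrian unknot is the unique Legendrian unknot with $\tb=-1$, the knot $K$ must be non-trivial (otherwise $\leg=\unknot$, contradicting $[\leg]\neq[\unknot]$).

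Next I would squeeze $\overline{\tb}(K)$. Because $K$ is amphicheiral, $K\connsum\overline{K}$ is smoothly slice, so the slice--Bennequin inequality together with additivity of the maximal Thurston--Bennequin number under connected sum (Etnyre--Honda) yields
\[
  2\,\overline{\tb}(K)+1 \;=\; \overline{\tb}(K\connsum\overline{K}) \;\leq\; 2\,g_4(K\connsum\overline{K})-1 \;=\; -1,
\]
so $\overline{\tb}(K)\leq-1$. Combined with $\overline{\tb}(K)\geq\tb(\leg)=-1$ we conclude $\overline{\tb}(K)=-1$, attained by $\leg$ with vanishing rotation number. This is precisely where the smooth input runs out: the available bounds only give $\overline{\tb}(K)\leq-1$, matching the lower bound forced by $\leg$, so the remaining step must use an invariant that genuinely detects amphichirality.

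For that step I would use the behavior of the grid/LOSS invariant $\lossh$ under Lagrangian concordance. A concordance $\legm\prec\legp$ induces a cobordism map on $\HFKh$ of the mirror knots under which the classes $\lossh(\legp)$ and $\lossh(\legm)$ correspond \cite{GolJuh19:LOSSConc,BalLidWon22:LagCobHFK}; upgrading this (as in the ribbon-concordance setting) to injectivity of the relevant maps would promote ``$\lossh\neq 0$'' to an invariant of zigzag concordance classes. Since $\lossh(\unknot)\neq 0$ and $\leg\connsum\leg$ is zigzag concordant to $\unknot$, this would give $\lossh(\leg\connsum\leg)\neq 0$, hence $\lossh(\leg)\neq 0$ by the connected-sum formula for $\lossh$. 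Now $\lossh(\leg)$ lies in Alexander grading $\tfrac12\bigl(\tb(\leg)-\rot(\leg)+1\bigr)=0$ of $\HFKh(K)\cong\HFKh(\overline{K})$, and the plan is to combine this with the interaction of $\lossh$ with mirroring --- available because $\overline{K}\cong K$ --- and with the sharpness $\overline{\tb}(K)=2\tau(K)-1=-1$ to contradict the non-triviality of $K$. The main obstacle is exactly this last step: showing that ``$\lossh(\leg)\neq 0$ with $\tb(\leg)=-1$, $\rot(\leg)=0$, and $K$ amphicheiral'' forces $K$ to be trivial, which requires pinning down the grading- and mirror-symmetry consequences of amphichirality for a non-vanishing LOSS class, as well as verifying that the Lagrangian-concordance maps are injective enough to make the zigzag-propagation argument go through.
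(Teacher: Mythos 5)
Your reduction is sound and matches the paper's opening move: since every arrow in a zigzag concordance is a concordance, $\tb$ and $\rot$ are preserved (this is \fullref{prop:zz-cob-classical}), so $2$-torsion forces $\tb(\leg)=-1$ and $\rot(\leg)=0$. But from there your argument has a genuine gap. The slice--Bennequin estimate only yields $\overline{\tb}(K)\leq -1$, which is exactly consistent with the situation you are trying to rule out, namely a non-trivial amphicheiral $K$ admitting a Legendrian representative with $\tb=-1$; nothing in your second step excludes it. The proposed third step does not close this: the needed facts about $\lossh$ are not available. Under a Lagrangian concordance one only gets a map under which the classes correspond; injectivity of these maps is not known, and, more structurally, in a cospan both cobordisms point \emph{up} to $\leg_+$, so non-vanishing of $\lossh(\maxunknot\,\connsum$-side$)$ cannot be pushed across the zigzag to $\leg\connsum\leg$ --- this is precisely the obstruction-failure the paper spells out in the remark of \fullref{ssec:lch-zz} for $\lossh$-type invariants. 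Finally, even granting $\lossh(\leg)\neq 0$ with $\tb=-1$, $\rot=0$, and $K$ amphicheiral, you have no argument that this forces $K$ to be trivial, and you acknowledge as much; the claim that sharpness $\overline{\tb}(K)=2\tau(K)-1$ leads to a contradiction is unsubstantiated.

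The paper avoids all of this with an elementary grid-diagram identity (\fullref{prop:max-tb-grid}): for any knot, $\overline{\tb}(K)+\overline{\tb}(m(K))=-\arcindex(K)$, proved by rotating a minimal grid diagram by $\pi/2$ and observing that crossing signs cancel while the $X$'s and $O$'s account for all cusps. For amphicheiral $K$ this gives $\arcindex(K)=-2\,\overline{\tb}(K)\leq -2\tb(\leg)=2$, and arc index $\leq 2$ forces the unknot (any non-trivial knot has arc index at least $5$; equivalently, the identity shows every non-trivial amphicheiral knot has $\overline{\tb}\leq -3$, so the case $\overline{\tb}(K)=-1$ that your approach leaves open simply never occurs). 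If you want to salvage your outline, the missing ingredient is a replacement for this quantitative strengthening of ``$\overline{\tb}(K)\leq -1$'' for amphicheiral knots, not a Floer-theoretic propagation through the zigzag.
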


Paralleling work of Cochran and Harvey \cite[Proposition~4.1]{CocHar18:ConcGeom}, we show that $\LZC$ is not $\delta$-hyperbolic. Further, inspired by \cite[Theorem~6.5]{CocHar18:ConcGeom}, 
which shows that every winding-number-$\pm 1$ satellite operator is a quasi-isometry of $\ConcGrp$ to 
itself, we prove that $\LZC$ has a similar property:

\begin{theorem} \label{thm:winding-1-qis}
  Any winding-number-$\pm 1$ satellite operator with rotation number $0$ 
  relative to the identity or reverse operator is a quasi-isometry of $\LZC$ to 
  itself.%
  %\footnote{\mw{The relative rotation number depends on the choice of Seifert 
  %surface. Here, we mean...}}
\end{theorem}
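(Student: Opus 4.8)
The plan is to follow the strategy of Cochran–Harvey \cite[Theorem~6.5]{CocHar18:ConcGeom}, adapting it to the Lagrangian zigzag setting. A winding-number-$\pm 1$ satellite operator $P$ acts on $\LZC$ because a Lagrangian zigzag cobordism between companions can be turned into one between the corresponding satellites by taking the (relative) product with the pattern knot in the solid torus; the rotation-number hypothesis ensures that the pattern, inserted at both ends, does not change the rotation number, so $P$ maps each rotation-number component to itself. The two properties we must establish for $P$ to be a quasi-isometry are: (i) $P$ is coarsely Lipschitz, i.e.\ $\LagGenus(P(\leg), P(\leg')) \le C \cdot \LagGenus(\leg, \leg') + C$ for a constant $C$ depending only on $P$; and (ii) $P$ is coarsely surjective and admits a coarse inverse with the same Lipschitz property. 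Property~(i) will come from the satellite construction on cobordisms: a genus-$g$ Lagrangian zigzag cobordism from $\leg$ to $\leg'$ yields a Lagrangian zigzag cobordism from $P(\leg)$ to $P(\leg')$ whose total genus is bounded by $a g + b$, where $a$ depends on the winding number's interaction with the surface (roughly, the pattern is carried along $w$-fold, but $w = \pm 1$ here, so the main contribution is the genus added by the handles realizing the pattern's own topology at each elementary piece of the zigzag).

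The key new ingredient, compared to the smooth case, is that we must perform the satellite operation \emph{within the category of Lagrangian cobordisms}, not merely smooth ones. First I would recall (or set up) the construction that takes an embedded exact Lagrangian cobordism $L \subset \R \times \R^3$ from $\legm$ to $\legp$ and a Legendrian pattern $\beta$ in the $1$-jet space of the solid torus, and produces an exact Lagrangian cobordism from the $\beta$-satellite of $\legm$ to the $\beta$-satellite of $\legp$ — this is standard when the framing is handled by the Legendrian contact framing, and the winding-number-$\pm 1$, rotation-number-$0$ hypotheses are exactly what make the Maslov/rotation bookkeeping trivial. Applying this to each of the $2n$ connected cobordisms $L_i^<, L_i^>$ in a zigzag \eqref{eq:zz} realizing $\LagGenus(\leg,\leg')$, and checking that connectedness is preserved (the winding number $\pm 1$ guarantees the satellited cobordism is connected whenever the original is, since the pattern strand is carried through), produces a zigzag between $P(\leg)$ and $P(\leg')$; its total genus is $\LagGenus(\leg,\leg')$ plus a bounded correction per elementary handle/cusp, giving~(i).

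For (ii), the coarse inverse is the satellite operator $\bar P$ by the "reverse" pattern: since $w(P) = \pm 1$, the composition $\bar P \circ P$ is a satellite operator of winding number $1$ that is Lagrangian-zigzag-concordant-to-the-identity on every companion with a uniformly bounded genus cost — concretely, there is a Lagrangian zigzag cobordism from $\bar P(P(\leg))$ to $\leg$ whose genus is bounded by a constant $D(P)$ independent of $\leg$, obtained by a standard "cancel the pattern" move realized by Lagrangian handle attachments. This gives $\LagGenus(\bar P(P(\leg)), \leg) \le D(P)$ for all $\leg$, which together with~(i) for both $P$ and $\bar P$ is precisely the statement that $P$ is a quasi-isometry of $\LZC$ onto itself.

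The main obstacle I anticipate is the Lagrangian realizability of the satellite-and-cancel operations: in the smooth category one freely attaches bands and handles, but here every piece must be an exact Lagrangian cobordism, so the technical heart of the argument is constructing the satellite cobordism and the "inverse-pattern cancellation" cobordism explicitly (e.g.\ via front projections / the Legendrian solid torus, or by a Weinstein-neighborhood argument) and verifying exactness, connectedness, and the rotation-number constraint at each step. The winding-number-$\pm 1$ and relative-rotation-$0$ hypotheses are what keep this tractable; without them the genus blow-up would not be coarsely linear (for $|w| \ge 2$ one expects no quasi-isometry, exactly as in \cite{CocHar18:ConcGeom}), so I would isolate precisely where $w = \pm 1$ is used and flag that the bound $C = C(P)$ depends on the genus and number of Reidemeister-type moves in a fixed front diagram of the pattern.
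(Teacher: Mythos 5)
There is a genuine gap, and it lies in both pillars of your argument. For your step (i) you need to satellite each constituent cobordism $L^<_i, L^>_i$ of a minimal-genus zigzag by the pattern; but the satellite construction for Lagrangian cobordisms is only available when the companion-level cobordism is a concordance (\fullref{thm:satellite-concordance}; even the generalization in \cite{GuaSabYac22:LegSatellitesCob} does not produce a satellite of an arbitrary higher-genus exact Lagrangian cobordism, because the Weinstein neighborhood of a genus-$g$ cobordism is modeled on $T^*L$ rather than $\R \cross J^1S^1$, so one cannot simply insert the cylinder over the pattern). Your step (ii) is worse off: there is no ``cancel the pattern'' move, and the claim that $\bar{P}\comp P$ is uniformly zigzag-close to the identity is essentially the theorem itself applied to the winding-number-one composite operator, so the argument is circular. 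Note also that this is not actually the Cochran--Harvey strategy: they do not build a coarse inverse; they show a winding-number-$\pm 1$ operator is a \emph{bounded distance} from the identity (or reverse) operator and invoke the stability of quasi-isometries under bounded perturbation (\fullref{lem:quasi-isom}). Once $\LagGenus(P(\leg),\leg) \leq D(P)$ uniformly in $\leg$, coarse Lipschitzness and coarse surjectivity are automatic, so neither of your steps is needed in that form.

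The missing mechanism, which is the actual content of the proof, is to compare the pattern to the core of $J^1S^1$ \emph{inside the pattern space}: the pattern $\Pi$ and the cable $C^m_{n,1}$ are homologous in $J^1S^1$, and when the relative rotation number vanishes one gets a Lagrangian zigzag cobordism between them in $\R \cross J^1S^1$ of some total genus $D$ depending only on $\Pi$. This requires extending the cospan construction of \cite[Proposition~3.1]{SabVelWon21:MaxLeg} to homologically non-trivial Legendrians in a general $(M,\alpha)$ (\fullref{lem:lqc-satellite-general} and \fullref{cor:lqc-satellite-j1s1}); your rotation-number-$0$ hypothesis is exactly what forces the comparison cable to be the unstabilized identity (or reverse) pattern. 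One then transplants this zigzag into $\R \cross \R^3$ along the trivial cylinder over any companion $J$ (this only uses the easy, cylindrical-companion case of the satellite-of-cobordisms construction), yielding $\LagGenus(P(J), J) \leq D$ for every $J$, i.e.\ bounded distance from the identity, and the theorem follows from \fullref{lem:quasi-isom}. Without this comparison in $J^1S^1$, your proposal has no source for any uniform genus bound.
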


The proof of this statement requires extending  
\cite[Proposition~3.1]{SabVelWon21:MaxLeg}, which uses convex surface theory to 
obtain a span (i.e.\ a pair from a common lower bound) of decomposable 
Lagrangian cobordisms between any two null-homologous Legendrian knots in any 
contact $3$-manifold.  We establish the analogous statement 
(\fullref{lem:lqc-satellite-general}) for homologous knots in a possibly 
non-trivial homology class.

% **********
\subsection{Relation to non-classical invariants}
\label{ssec:relation-non-classical}

To better understand how non-classical invariants of Legendrian knots interact 
with zigzag concordance, we specialize to the Legendrian contact homology 
differental graded algebra (LCH DGA).  The effective use of LCH often requires 
Lagrangian cobordisms to have Maslov number $0$. Therefore, we prove a 
different strengthening of \cite{SabVelWon21:MaxLeg} to produce Maslov-$0$ 
cospans for Legendrian links in $\R^3$:

\begin{theorem}[cf.\ {\cite[Theorem~1.1]{SabVelWon21:MaxLeg}}]
  \label{thm:maslov-0}
  If $\leg$ and $\leg'$ are oriented Legendrian links in the standard contact 
  $\R^3$, all of whose components have vanishing rotation number, then there 
  exist a Legendrian link $\leg_+$ and Maslov-$0$ Lagrangian cobordisms from $\leg$ to $\leg_+$ and from $\leg'$ to $\leg_+$.
\end{theorem}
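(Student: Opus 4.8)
The plan is to re-run the argument behind \cite[Theorem~1.1]{SabVelWon21:MaxLeg}, which furnishes a cospan $\leg \to \leg_+ \leftarrow \leg'$ of decomposable Lagrangian cobordisms, while threading a Maslov potential through the construction and exploiting the vanishing of all rotation numbers to keep every piece $\Z$-graded. Recall that a decomposable Lagrangian cobordism $L$, presented as a movie whose frames are Legendrian isotopies, births and deaths of standard Legendrian unknots, and oriented saddle cobordisms, is Maslov-$0$ --- that is, $\mu_L = 0 \in H^1(L;\Z)$ --- precisely when the Maslov potentials on the fronts of the successive frames can be chosen coherently: transported across isotopies, equal to the canonical potential $\{q,q+1\}$ on the two strands of each born (or dying) standard unknot, and agreeing, up to the fixed local offset of an oriented saddle, on the two strands joined by each saddle. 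Since every component of $\leg$ and of $\leg'$ has vanishing rotation number, each such component's front admits a $\Z$-valued potential, unique up to a global shift; the problem is thus reduced to propagating these choices consistently through the two cobordisms.

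Two obstructions arise. The first is at the top: the restriction of $\mu_L$ to a boundary component is governed by that component's rotation number and vanishes exactly when it does, so one needs $\leg_+$ to have rotation-$0$ components. As the total rotation number is preserved along $L$ and equals that of $\leg$, hence $0$, the rotation numbers of the components of $\leg_+$ sum to zero; appending to $L$, and simultaneously to $L'$, a sequence of saddles that repeatedly merges a positively wound component with a negatively wound one, we obtain a new common top all of whose components have rotation number $0$, after which $\mu_L$ and $\mu_{L'}$ vanish on every boundary class. The second and principal obstruction is interior: any saddle that joins two strands of a single connected component has its potential offset $d$ already fixed, and when $d$ disagrees with the saddle's local offset the potential cannot be continued across it. I would remove each such saddle by a \emph{relay} --- just before performing it, insert a chain of $\lvert d\rvert$ births of standard unknots and replace the single saddle by $\lvert d\rvert+1$ saddles that thread the first strand through the chain to the second; because the two strands of each auxiliary unknot differ by $1$, their potentials can be chosen so that the potential continues consistently across all the new saddles. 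These auxiliary births are consumed by the relay's own saddles, so the endpoints of the cobordism are unaffected, and exactness of the enlarged cobordism is routine, the auxiliary Reeb chords being arbitrarily short. After relaying every offending saddle in $L$ and in $L'$, the Maslov potentials propagate through both movies, and both cobordisms become Maslov-$0$.

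The step I expect to be the main obstacle is arranging the construction of \cite[Theorem~1.1]{SabVelWon21:MaxLeg} so that every intermediate frame --- not just $\leg$, $\leg'$, and $\leg_+$ --- consists of rotation-$0$ Legendrian components, equivalently so that no saddle is forced to split off a component of nonzero rotation number; it is here that the $r=0$ hypothesis must be used in an essential way, and where the convex-surface-theoretic input to \cite{SabVelWon21:MaxLeg} has to be revisited. I would approach this by first normalizing the fronts so that up- and down-cusps sit compatibly with a $\Z$-valued potential (cancelling any unbalanced zigzag pairs by Legendrian isotopy) and then checking that each saddle called for in the original argument, being an oriented resolution, can be routed so as to distribute cusps evenly among the resulting components; if a recalcitrant saddle should nevertheless produce a nonzero-rotation component, one can merge it back against a canceling partner, as in the first obstruction above, before continuing. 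With all frames made rotation-$0$, the relay argument then upgrades the resulting cospan to the desired Maslov-$0$ cospan.
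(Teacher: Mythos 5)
Your proposal correctly identifies the two necessary conditions (coherent Maslov potentials frame-by-frame, and rotation number zero for every component at every level), and your ``relay'' through auxiliary unknot births is genuinely close in spirit to the paper's key move (\fullref{lem:maslov-0-surgery}, which reroutes an incompatible pinch through extra zigzags). But there are two concrete gaps. First, your post-hoc fixes cannot work. The Maslov class of a Lagrangian cobordism evaluates on every regular slice circle as (up to sign) twice the rotation number of that slice component; this evaluation is a homological quantity and is unaffected by any handles attached later. So if the original top $\leg_+$ of the cospan from \cite{SabVelWon21:MaxLeg} has a component of nonzero rotation number, appending merging saddles above it leaves those circles as interior cycles on which the Maslov class is already nonzero; likewise, if a ``recalcitrant saddle'' ever produces a nonzero-rotation component, merging it back against a cancelling partner ``before continuing'' does not repair anything. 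The only viable strategy is to prevent such components (and incompatible offsets) from ever arising, by rerouting the offending saddles themselves --- and that is exactly the step you flag as ``the main obstacle'' and leave unproved. This is where the real content of the paper's proof lives: Maslov-graded handle graphs with vertex conditions, ordered pinches, and \fullref{lem:maslov-0-stab} and \fullref{lem:maslov-0-surgery}, which rework every pinch in the reduction of $\leg$ and $\leg'$ into a Maslov-$0$ operation.

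Second, your relays destroy the common top. Each relay inserts births and extra saddles into the interior of $L$, and the resulting slice (hence every later slice, including the positive end) differs from the original by extra cusps/zigzags; the relays needed inside $L'$ are different, so after your modifications the two cobordisms no longer end on the same Legendrian, and you cannot simply stabilize the positive end of the other cobordism to compensate, since stabilization of the top of an exact Lagrangian cobordism is not available ``for free.'' The paper sidesteps this by making all corrections on the handle graphs over a \emph{common bottom}: the price of each correction is a stabilization of the negative end, which is harmless because the bottoms are stabilized Legendrian unknots that can be re-matched at the end (Legendrian unknots are determined by their classical invariants), and the common top is only then produced by surgering the \emph{union} of the two Maslov-$0$ handle graphs, with \fullref{lem:flat-handle-lagr} guaranteeing that the resulting partial-surgery cobordisms are Maslov-$0$. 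To turn your relay idea into a proof you would need to perform it at the level of this shared handle data rather than inside the two cospan movies separately.
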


As we shall see in \fullref{ssec:lch-zz}, however, the structure of the known 
non-classical invariants makes it difficult to use them to obstruct zigzag 
concordance.  On one hand, this serves as motivation to develop a more refined 
understanding of these invariants under Lagrangian cobordism or to develop new 
styles of non-classical invariants.  On the other, the precise nature of the 
failure of linearized LCH to provide an effective obstruction to zigzag 
concordance yields an interesting result, as follows.  The set of linearized 
LCH, taken over all possible augmentations of the LCH DGA, is usually encoded 
by the set $\mathcal{P}_\leg$ of corresponding Poincar\'e--Chekanov polynomials.  Zigzag cobordism yields 
a solution to the setwise geography problem for Poincar\'e--Chekanov 
polynomials, culminating in the following theorem:

\begin{theorem} \label{thm:poly-geography}
  For any finite set $\{ p_1(t), \ldots, p_n(t)\}$ of polynomials with 
  non-negative integral coefficients, there exist a Legendrian knot $\leg$ and 
  non-negative even integers $c_1, \ldots, c_n$ so that \[\{p_1(t) + 
    p_1(t^{-1}) +t + c_1, \ldots, p_n(t) + p_n(t^{-1}) + t+c_n\} \subset 
    \poly_\leg.\]
\end{theorem}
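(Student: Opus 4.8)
The plan is to realize each prescribed polynomial $p_i(t) + p_i(t^{-1}) + t$ as the Poincaré–Chekanov polynomial of a linearized LCH of some \emph{model} Legendrian knot $\leg_i$, and then to use zigzag cobordism to assemble a single Legendrian $\leg$ whose augmentation set dominates all of these. First I would recall the standard stabilization picture: if $\unknot$ is a maximal-Thurston–Bennequin, rotation-$0$ Legendrian unknot, its LCH DGA has a unique augmentation with linearized homology of Poincaré polynomial $t + 1$ (a single degree-$1$ generator and a single degree-$0$ generator, or in the normalization of the statement, $t$ together with the $c_i$-shift absorbing the grading-$0$ contributions). To produce a summand $p(t)$ I would take a connected sum / front-stacking construction: given $p(t) = \sum a_k t^k$, build a front by inserting, near the right cusp of $\unknot$, a collection of ``dip'' regions each contributing a pair of Reeb chords in complementary degrees $k$ and $-k+1$ — a now-routine move in the Chekanov–Eliashberg calculus — so that for a suitable augmentation the linearized differential vanishes on exactly the chords recording $p(t) + p(t^{-1})$, while the unavoidable grading-$0$ and grading-$1$ generators contribute $t + c_i$. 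The even integer $c_i$ records the number of ``extra'' degree-$0$ generators forced by the construction, and evenness comes from the fact that such generators appear in canceling-degree pairs under the $\Z/2$-grading bookkeeping.

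The key idea for packaging all $n$ polynomials into one knot is to exploit that a \emph{single} Legendrian can carry many augmentations, one per ``branch'' of a front. Concretely, I would form a Legendrian $\legjoin$ built so that its front contains $n$ well-separated regions $R_1, \dots, R_n$, with $R_i$ engineered as above for $p_i$, glued together along a long horizontal strand. By the standard ``augmentations are local'' principle — an augmentation can be chosen to be nontrivial on the chords of one region $R_i$ and trivial (or the ``base'' augmentation) on the chords of all other regions — one gets, for each $i$, an augmentation $\aug_i$ of the DGA of $\legjoin$ whose linearized homology is the homology contributed by $R_i$ plus a fixed contribution from the connecting strand. Thus $\{p_i(t)+p_i(t^{-1})+t+c_i : i=1,\dots,n\} \subset \poly_{\legjoin}$, absorbing the strand's fixed contribution into the $c_i$. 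At this stage one could already stop if $\poly$ is defined purely from LCH; but to be robust to the zigzag-cobordism framing of the section, I would instead invoke \fullref{thm:maslov-0}: take $\leg$ and $\leg'$ each to be $\legjoin$ (or, if a rotation-number normalization is needed, a stabilized version), obtain a common Maslov-$0$ Lagrangian cap $\leg_+$, and use functoriality of LCH under exact Lagrangian cobordisms — each augmentation of the DGA of $\leg_+$ pulls back to an augmentation of $\legjoin$, and the cobordism map on linearized homology only \emph{adds} generators (increasing $c_i$), so the prescribed polynomials persist in $\poly_\leg$ after adjusting the $c_i$ upward by the fixed even contribution of the cobordism.

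The main obstacle, and where I would spend the most care, is the \emph{realization step}: showing that an arbitrary polynomial $p(t)$ with non-negative integer coefficients — including high-degree and large-coefficient terms — can be obtained as an exact linearized-homology contribution, with \emph{no parasitic cancellation} and with the grading-$0$ excess being both \emph{finite} and \emph{even}. The danger is that adding many dips to a front introduces new degree-$0$ and degree-$1$ Reeb chords whose linearized differentials are not under control; one must choose the augmentation so that the linearized complex in each positive degree $k \geq 2$ has zero differential (giving exactly $a_k$ generators in degree $k$ and $a_k$ in degree $1-k$, i.e.\ the $p(t)+p(t^{-1})$ contribution), while all the ``noise'' is confined to degrees $0$ and $1$. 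I expect this to follow from an explicit dipped-front construction in the spirit of the Chekanov–Eliashberg normal form together with a careful choice of augmentation making the relevant structure coefficients vanish, but verifying that the degree-$0$ count is even — rather than merely non-negative — will require a parity argument, likely using that the total LCH Euler characteristic (or the self-linking/Thurston–Bennequin number mod $2$) constrains $(\text{\# degree-}0) - (\text{\# degree-}1)$, combined with the fact that we have arranged exactly one ``new'' degree-$1$ generator (the $t$ term). A secondary, more routine obstacle is bookkeeping the additive constants: each of the connecting strand, the stabilizations needed for rotation-number normalization, and the Lagrangian cobordism map contributes a \emph{fixed} even number of grading-$0$ generators, and I would absorb all of these into the final $c_i$, checking evenness is preserved under each step.
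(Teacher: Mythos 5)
Your overall aim is right, but the central packaging step has a genuine gap, and it is precisely the step the paper's machinery is designed to make unnecessary. Your plan is to build one front containing $n$ ``well-separated'' regions $R_1,\dotsc,R_n$ and to invoke an ``augmentations are local'' principle so that the augmentation $\aug_i$ sees only the contribution of $R_i$, with all other regions confined to degrees $0$ and $1$. No such principle is available as stated: the chords of the inactive regions $R_j$, $j\neq i$, remain generators of the linearized complex, and with the ``base'' augmentation there is no reason their homology is concentrated in degrees $0$ and $1$ or cancels --- arranging even a single prescribed polynomial this way, with no parasitic survivors outside degrees $0,1$, is exactly the content of Melvin--Shrestha's theorem, which you are in effect proposing to reprove and then to strengthen to $n$ polynomials carried simultaneously by one front. (A connected-sum-style gluing makes the problem visible: every augmentation of a sum restricts to augmentations of the pieces, and the linearized homologies of \emph{all} pieces appear, so you cannot ``switch off'' the regions $R_j$ in nonzero degrees in general.) Your closing appeal to \fullref{thm:maslov-0} does not repair this, because you apply it to $\legjoin$ against itself, where it adds nothing.

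The paper's route avoids all of this: cite Melvin--Shrestha once to get, for each $i$, a separate knot $\leg_i$ with an augmentation realizing $p_i(t)+p_i(t^{-1})+t$; then use \fullref{thm:maslov-0} inductively to produce a \emph{single} $\leg_+$ admitting Maslov-$0$ cobordisms $L_i$ from every $\leg_i$; finally pull back each $\aug_i$ along $\phi_{L_i}$ and apply \fullref{thm:lch-obstruction}, which shifts the polynomial only by $-\chi(L_i)$ in degree $0$. This also disposes of the parity issue you flag as needing a separate argument: since each $L_i$ is a connected cobordism between knots, $c_i=-\chi(L_i)=2g(L_i)$ is automatically a non-negative even integer. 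In short, you have the right ingredients (Maslov-$0$ cospans and Pan's functoriality) but deploy them as an afterthought; used as the actual combining mechanism for $n$ distinct model knots, they give the theorem with no new front engineering and no parity argument.
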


This theorem generalizes a theorem of Melvin and Shrestha 
\cite{MelShr05:ChePolys} that any single polynomial of the form $p(t) = q(t) + 
q(t^{-1}) +t$ can be realized as the linearized Legendrian contact homology of 
a knot with respect to an augmentation; see \cite{BouGal22:GeoBilinLCH, 
  BouSabTra15:LagCobGF} for related results.

% **********
\subsection{Organization}

Throughout the paper, we assume familiarity with fundamental notions of 
Legendrian knots and contact topology as in \cite{Etn05:LegTransSurvey, 
  Gei08:ContactBook, Tra21:LegTransIntro}.

The article is organized as follows: We begin by defining Lagrangian zigzag 
cobordism in \fullref{sec:lzc}, and then comparing and contrasting that 
definition with smooth and Lagrangian cobordism.  We then proceed to 
investigate the structure of the Lagrangian zigzag cobordism graph in 
\fullref{sec:zz-graph}, including proofs of \fullref{prop:graph-structure}, 
\fullref{thm:amph-not-torsion}, and \fullref{thm:winding-1-qis}.  Finally, we 
discuss interactions between zigzag cobordisms and non-classical invariants, 
first investigating the existence of Maslov-$0$ zigzag cobordisms in 
\fullref{sec:maslov-0}, then treating Legendrian contact homology in 
\fullref{sec:non-classical}.

% **********
\subsection*{Acknowledgments}

The authors thank Dan Rutherford for raising the question that led to 
\fullref{thm:poly-geography} and Allison N.\ Miller for suggesting the examples 
in the proof of \fullref{prop:valence}.  Part of the research was done while 
JMS was hosted by the Institute for Advanced Study; in particular, this paper 
is partly based on work supported by the Institute for Advanced Study.  DSV was 
partially supported by NSF Grant DMS-1907654. CMMW was partially supported by 
NSF Grant DMS-2238131 and an NSERC Discovery Grant.  Part of the research was 
done while CMMW was at Louisiana State University, and he thanks the department 
for its support. AW was partially supported by an AMS--Simons Travel Grant and 
NSF Grant DMS-2238131. 

% !TEX root = lqc.tex

% ****************************************
% "Undecorated" LZC structure
% ****************************************

\section{The Lagrangian Zigzag Cobordism Relation}
\label{sec:lzc}

In this section, we formulate precise definitions of the Lagrangian zigzag 
cobordism and concordance relations on Legendrian links.  We then compare the 
zigzag concordance relation to smooth concordance (proving 
\fullref{thm:disconnected-lqc}) and Lagrangian concordance (proving 
\fullref{thm:lqc-not-c}).

% **********
\subsection{Definition of Lagrangian zigzag cobordism}
\label{ssec:lzc-defn}

In order to define Lagrangian zigzag cobordism, we first recall the definition 
of a Lagrangian cobordism.

\begin{definition}
  Let $\leg_-$ and $\leg_+$ be Legendrian links in a contact $3$-manifold $(M, 
  \ker \alpha)$. An \emph{exact Lagrangian cobordism} is an exact embedded 
  orientable Lagrangian surface $L\subset (\R \times M, d(e^t\alpha))$ such 
  that there exists $T$, satisfying that:
  \begin{itemize}
    \item $L\cap ((-\infty, -T)\times M) = (-\infty, -T) \times\leg_-$, denoted 
      $\mathcal{E}_- (L)$;
    \item $L\cap ((T, \infty)\times M) = (T,\infty)\times \leg_+$, denoted 
      $\mathcal{E}_+ (L)$;
    \item There is a smooth function $f \colon L\to \R$ so that $df = 
      e^t\alpha\rvert_L$ and $f$ is constant at each cylindrical end 
      $\mathcal{E}_\pm (L)$; and
    \item $L \setminus (\mathcal{E}_- (L) \union \mathcal{E}_+ (L))$ is compact 
      with boundary $(\set{-T} \times \leg_-) \union (\set{T} \times \leg_+)$.
  \end{itemize}
  %\mw{Isn't there something about the middle part being compact?}
  In this case we say that $\leg_- \LagCobRel \leg_+$ via the Lagrangian $L$ or 
  that $L$ is a Lagrangian cobordism from $\leg_-$ to $\leg_+$. A Lagrangian 
  cobordism $L$ between Legendrian knots is a \dfn{Lagrangian concordance} if 
  it has genus 0.
\end{definition}

Henceforth, unless specified, we will focus on the case $M = \R^3$ and $\alpha 
= \mathrm{d}z - y \mathrm{d}x$.
 
Classical invariants of Legendrian knots act as obstructions to Lagrangian 
cobordism.  In particular, Chantraine \cite{Cha10:LagConc} proved:

\begin{proposition}[{\cite[Theorem~1.2]{Cha10:LagConc}}] 
  \label{prop:cob-classical}
  If $\leg_- \LagCobRel \leg_+$ via the Lagrangian $L$, then 
  $\rot(\leg_-)=\rot(\leg_+)$ and $\tb(\leg_+)-\tb(\leg_-) = 
  - \chi(L)$.%
  \footnote{The statement in fact holds for null-homologous knots $\leg_\pm$ in 
    a general $(M, \alpha)$, with the rotation numbers taken with corresponding 
    Seifert surfaces.}
\end{proposition}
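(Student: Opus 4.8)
The plan is to prove the two assertions separately: the rotation-number equality from the Maslov class of the Lagrangian $\cob$, and the Thurston--Bennequin identity from a Poincar\'e--Hopf count for a suitable Lagrangian perturbation of $\cob$.

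For the rotation number, I would first fix a compatible almost complex structure $J$ on the symplectization $(\R\times\R^3, d(e^t\alpha))$ with $J\bdy_t = R_\alpha$, where $R_\alpha = \bdy_z$ is the Reeb field, and $J(\xistd) = \xistd$, so that $T(\R\times\R^3) \isom \C\langle\bdy_t, R_\alpha\rangle \dirsum \xistd$ as complex vector bundles. This bundle is trivial, so the Maslov class $\mu_{\cob}\in H^1(\cob;\Z)$ of the Lagrangian $\cob$ is defined. I would then compute $\mu_{\cob}$ on the boundary circles: over a cylindrical end $\mathcal{E}_\pm(\cob) = \R\times\legpm$ one has $T\cob = \R\bdy_t \dirsum T\legpm$ with $\bdy_t$ constant in the chosen trivialization and $J$-orthogonal to $\xistd \supset T\legpm$, so the Maslov index of the loop of Lagrangian planes traced by $T\cob$ along the circle $\{0\}\times\legpm$ equals that of the loop of lines $\R\cdot v(s)$ in $\mathrm{Lag}(\xistd)$ swept out by the tangent $v$ to $\legpm$, which by the definition of the rotation number is $2\rot(\legpm)$. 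Finally, since $\cob$ is connected with $\bdy\cob = \legp \disjunion \legm$, the class $[\bdy\cob]$ vanishes in $H_1(\cob;\Z)$, so $[\legp] = [\legm]$ there (in the general, possibly disconnected, link case one argues on each connected component of $\cob$ and sums over its boundary circles, using additivity of $\rot$); evaluating $\mu_{\cob}$ then gives $2\rot(\legp) = 2\rot(\legm)$, hence $\rot(\legp) = \rot(\legm)$.

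For the Thurston--Bennequin identity, I would use that $\tb(\legpm)$ is the linking number of $\legpm$ with its pushoff $\legpm^R$ in the Reeb direction $R_\alpha$, which realizes the contact framing. Take a Morse function $h\colon\cob\to\R$ with no critical points on the cylindrical ends, arranged so that near the ends $dh$ corresponds, via the symplectic form, to a small nonvanishing section of the Reeb direction; then, under the Weinstein identification of a neighborhood of $\cob$ with $T^*\cob$, the graph $\cob'$ of a small multiple of $dh$ is a Lagrangian surface, $C^1$-close to $\cob$, cylindrical near the ends and equal there to $\R\times\legpm^R$, meeting $\cob$ transversally exactly at the critical points of $h$. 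I would then evaluate the algebraic intersection number $\cob\cdot\cob'$ in two ways. First, the local sign of a transverse intersection of a Lagrangian surface with such a graph-type pushoff is $(-1)^{\mathrm{ind}(p)+1}$ at a critical point $p$; viewing $h$ as a Morse function on the cobordism $\cob$ from $\legm$ to $\legp$ (whose lower portion is a half-cylinder over $\legm$, with $\chi(\legm) = 0$), these signs sum to $\cob\cdot\cob' = -\chi(\cob)$. Second, since $\cob$ and $\cob'$ are disjoint outside a compact set, the intersection number can be computed by truncating both at $\{\pm T'\}\times\R^3$ for $T'$ large and capping off, whereupon the outgoing end contributes $\mathrm{lk}(\legp, \legp^R) = \tb(\legp)$ and the incoming end contributes $-\tb(\legm)$, so $\cob\cdot\cob' = \tb(\legp) - \tb(\legm)$. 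Equating the two gives $\tb(\legp) - \tb(\legm) = -\chi(\cob)$.

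The step I expect to be the main obstacle is making both evaluations of $\cob\cdot\cob'$ rigorous: the Poincar\'e--Hopf count has to be justified on a surface with cylindrical ends (including checking that the graph pushoff can be taken genuinely $C^1$-close to $\cob$ near those ends), and the end-by-end computation requires carefully extracting the contribution of each end while reconciling every orientation convention involved --- the orientation of $\cob$, the sign of the canonical symplectic form used in the Weinstein neighborhood relative to that of the symplectization, and the incoming-versus-outgoing sign at the two ends --- so that the two evaluations match with the signs as stated. A useful consistency check is that a Lagrangian pair-of-pants cobordism, for which $\chi(\cob) = -1$ and $\tb$ increases by $1$, satisfies the claimed formula. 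By contrast, once the Maslov-index computation over the ends is carried out with a fixed orientation of $\legpm$, the rotation-number assertion is comparatively routine.
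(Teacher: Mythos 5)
This proposition is not proved in the paper at all: it is quoted verbatim from Chantraine \cite{Cha10:LagConc}, so the only meaningful comparison is with Chantraine's original argument --- and your sketch is essentially a reconstruction of it. The rotation-number step (evaluate the Maslov class $\mu_L$ of the Lagrangian on the two boundary circles, using that the cylindrical ends contribute $2\rot(\legpm)$ and that $[\bdy L]=0$ in $H_1(L;\Z)$) and the $\tb$ step (compute $L\cdot L'$ for a pushoff $L'$ once as a zero count of a section of the normal bundle, which the Lagrangian condition identifies with $T^*L$, giving $-\chi(L)$, and once via the linking numbers of the Reeb pushoffs at the two ends, giving $\tb(\legp)-\tb(\legm)$ after capping with Seifert surfaces) are exactly the two halves of Chantraine's proof. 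The points you flag as delicate are indeed where the work lies, but they go through: on a cylindrical end one has $\iota_{R_\alpha} d(e^t\alpha)\rvert_{TL} = -e^t\,dt$, so a function equal to $-\varepsilon e^t$ near the ends has graph cylindrical over the Reeb pushoff, and the end-by-end evaluation of $L\cdot L'$ is made rigorous by capping both surfaces with (pushed-off) Seifert surfaces and using that the resulting closed surfaces have vanishing algebraic intersection in $\R^4$; the sign bookkeeping is fixed, as you note, by the pair-of-pants consistency check. One small simplification: connectedness of $L$ is not needed for the rotation-number step, since $\mu_L([\bdy L])=0$ holds component by component (a component meeting only one end forces that end's rotation number to vanish), so the equality $\rot(\legm)=\rot(\legp)$ follows in all cases.
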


Lagrangian cobordisms may be constructed using traces of Legendrian isotopies 
or the attachment of Lagrangian $0$- or $1$-handles as in 
\fullref{fig:decomposable_moves} \cite{BouSabTra15:LagCobGF, 
  EkhHonKal16:LagCob}.  A Lagrangian cobordism constructed using these three 
operations is called \dfn{decomposable}.

\begin{figure}[!htb]
  \labellist
  \small\hair 2pt
  \pinlabel {$\leg$} [b] at 28 22
  \pinlabel {$\leg$} [b] at 28 120
  \endlabellist
  \includegraphics{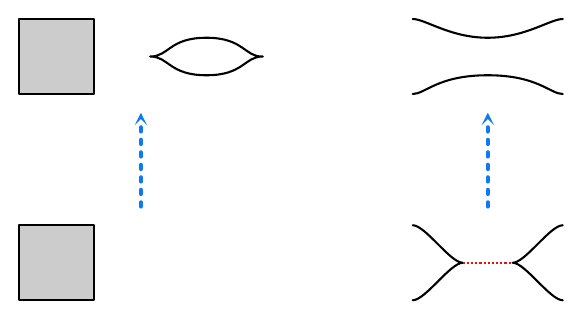}
  \caption{Attaching a Lagrangian $0$-handle (left) and a $1$-handle (right) to 
    a Legendrian link.}
  \label{fig:decomposable_moves}
\end{figure}

To begin the precise definition of Lagrangian zigzag cobordism, let $\leg$ and 
$\leg'$ be two Legendrian links.  A 
\dfn{Lagrangian cospan} $\legjoin(\leg,\leg')$ consists of another Legendrian 
link $\leg_+$ and connected Lagrangian cobordisms $L$ from $\leg$ to $\leg_+$ 
and $L'$ from $\leg'$ to $\leg_+$, which we depict using the following diagram:

\begin{equation*}
  \begin{tikzcd}[column sep=small, row sep = small]
    & \leg_+ & \\
    \leg  \ar[ur,"L"] && \leg' \ar[ul, "L'"']
  \end{tikzcd}
\end{equation*}	

\begin{definition} \label{defn:lqc-relation}
  Two Legendrian links $\leg$ and $\leg'$ are \dfn{Lagrangian zigzag 
    cobordant}, denoted $\leg \qcob \leg'$, if there exist Legendrian links \[
    \leg = \leg_0, \leg_1, \dotsc, \leg_{n-1}, \leg_n=\leg'
  \]
  and Lagrangian cospans $\legjoin(\leg_{i-1},\leg_i)$ for $i=1, \ldots, n$.  
  The data defining a zigzag cobordism is denoted $\qcobordism(\leg, \leg')$.

  If all of the Lagrangian cobordisms in $\qcobordism(\leg, \leg')$ are 
  concordances, then $\leg$ and $\leg'$ are \dfn{Lagrangian zigzag concordant}, 
  denoted $\leg \qconc \leg'$.
\end{definition}
	
See the diagram in \eqref{eq:zz} for a depiction of the data comprised by a 
Lagrangian zigzag cobordism. We note that both $\qcob$ and $\qconc$ are 
equivalence relations on the set of oriented Legendrian links.  
	
\begin{example} \label{ex:zz-from-c}
  A Lagrangian cobordism $L$ from $\leg$ to $\leg'$ induces a Lagrangian zigzag 
  cobordism using the cylindrical cobordism from $\leg'$ to itself:
  \begin{equation*}
    \begin{tikzcd}[column sep=small, row sep = small]
      & \leg' & \\
      \leg  \ar[ur,"L"] && \leg' \ar[ul, "="']
    \end{tikzcd}
  \end{equation*}
\end{example}

\begin{example} \label{ex:zz-not-c}
  Suppose that $\leg$ and $\leg'$ are decomposably Lagrangian slice, i.e.\ 
  there are decomposable Lagrangian concordances $L$ and $L'$ from the maximal 
  unknot $\maxunknot$ to $\leg$ and $\leg'$, respectively.  For example, the 
  Legendrian realizations of the $m (9_{46})$ and $m (12n_{768})$ knots in 
  \fullref{fig:lqc-not-c} are decomposably slice 
  \cite{CorNgSiv16:LagConcObstructions}.
  \begin{figure}[!htb]
    \includegraphics{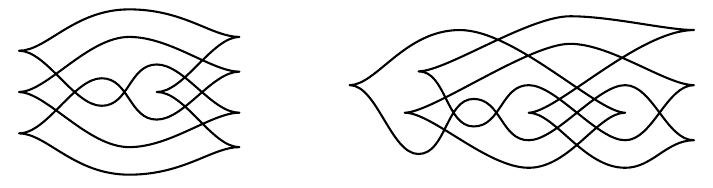}
    \caption{Legendrian realizations of the Lagrangian slice knots $m (9_{46})$ 
      (left) and $m (12n_{768})$ (right).}
    \label{fig:lqc-not-c}
  \end{figure}

  The Legendrians $\leg$ and $\leg'$ are Lagrangian zigzag concordant:
  \begin{equation*}
    \begin{tikzcd}[column sep=small, row sep = small]
      & \leg & & \leg'\\
      \leg  \ar[ur,"="] && \maxunknot \ar[ul,"L"'] \ar[ur,"L'"] && \leg' 
      \ar[ul, "="']
    \end{tikzcd}
  \end{equation*}
\end{example}

We attach several quantities to a Lagrangian zigzag cobordism. The \dfn{Euler 
  characteristic} $\chi(\qcobordism(\leg, \leg'))$ of a Lagrangian zigzag 
cobordism is simply the sum of the Euler characteristics of the constituent 
cobordisms; the genus of a Lagrangian zigzag cobordism is then defined as 
usual.  We use these notions to define a relative genus between two 
Legendrians:

\begin{definition} \label{defn:zz-genus}
  Given two Legendrian knots $\leg$ and $\leg'$, their \dfn{relative Lagrangian 
    zigzag cobordism genus} $\LagGenus(\leg, \leg')$ is the minimal genus of 
  all Lagrangian zigzag cobordisms between $\leg$ and $\leg'$.  
\end{definition}

\begin{remark}
  It is clear that the relative Lagrangian zigzag cobordism genus descends to 
  zigzag concordance classes.
\end{remark}

\begin{remark} \label{rem:zz-genus-neq-smooth-genus}
  In \cite[Section~6]{SabVelWon21:MaxLeg}, it was shown that the smooth 
  relative $4$-genus is a lower bound on the relative Lagrangian zigzag 
  cobordism genus, and that the bound is not always realized.  In particular, 
  the relative Lagrangian zigzag cobordism genus between the maximal unknot and 
  its double stabilization is $1$ while the relative smooth genus is clearly 
  $0$.
\end{remark}

The classical invariants interact nicely with Lagrangian zigzag cobordism and 
its genus.

\begin{proposition} \label{prop:zz-cob-classical}
  Let $\leg$ and $\leg'$ be Legendrian knots. Suppose that $\leg \qcob \leg'$; 
  then $\rot(\leg) = \rot(\leg')$ and \[\left| \tb(\leg)-\tb(\leg')\right| \leq 
    2 \LagGenus(\leg, \leg').\]  In particular, if $\leg \qconc \leg'$, then 
  $\tb(\leg) = \tb(\leg')$.
\end{proposition}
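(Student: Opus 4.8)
The plan is to run Chantraine's formula (\fullref{prop:cob-classical}) along a zigzag and let the contributions of the auxiliary links telescope. Fix any Lagrangian zigzag cobordism between the knots $\leg$ and $\leg'$, with data as in \eqref{eq:zz}: Legendrian links $\leg = \leg_0, \leg_1, \dotsc, \leg_n = \leg'$, and for each $i$ a cospan with apex $\leg_{+,i}$ together with \emph{connected} Lagrangian cobordisms $L^<_i$ from $\leg_{i-1}$ to $\leg_{+,i}$ and $L^>_i$ from $\leg_i$ to $\leg_{+,i}$. For the rotation number, I would chain the equality $\rot(\leg_-) = \rot(\leg_+)$ of \fullref{prop:cob-classical} along each $L^<_i$ and $L^>_i$; when an endpoint is a link this equality should be read as the equality of the sums of rotation numbers over components, which is valid because the oriented boundary of an exact Lagrangian cobordism is null-homologous in it. This makes the total rotation number constant along the whole zigzag, and since $\leg_0$ and $\leg_n$ are knots it yields $\rot(\leg) = \rot(\leg')$.

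For the Thurston--Bennequin estimate, apply \fullref{prop:cob-classical} to the two cobordisms in the $i$-th cospan,
\[
  \tb(\leg_{+,i}) - \tb(\leg_{i-1}) = -\chi(L^<_i), \qquad \tb(\leg_{+,i}) - \tb(\leg_i) = -\chi(L^>_i),
\]
and subtract to eliminate the apex term: $\tb(\leg_i) - \tb(\leg_{i-1}) = \chi(L^>_i) - \chi(L^<_i)$. Since $L^<_i$ and $L^>_i$ are connected surfaces, $\chi(L^<_i) = 2 - 2g(L^<_i) - \#\leg_{i-1} - \#\leg_{+,i}$ and $\chi(L^>_i) = 2 - 2g(L^>_i) - \#\leg_i - \#\leg_{+,i}$, where $\#(\cdot)$ denotes the number of components; the apex counts cancel, leaving
\[
  \tb(\leg_i) - \tb(\leg_{i-1}) = 2\bigl( g(L^<_i) - g(L^>_i) \bigr) + \#\leg_{i-1} - \#\leg_i.
\]
Now sum over $i = 1, \dotsc, n$. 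The left side telescopes to $\tb(\leg') - \tb(\leg)$ (so the Thurston--Bennequin invariants of the auxiliary links drop out of the final identity), and the component counts telescope to $\#\leg - \#\leg' = 0$ since $\leg$ and $\leg'$ are knots; hence
\[
  \tb(\leg') - \tb(\leg) = 2 \sum_{i=1}^n \bigl( g(L^<_i) - g(L^>_i) \bigr).
\]
As genus is non-negative, the right-hand side has absolute value at most $2\sum_{i=1}^n\bigl( g(L^<_i) + g(L^>_i) \bigr)$, which is twice the genus of the chosen zigzag cobordism; minimizing over all zigzag cobordisms between $\leg$ and $\leg'$ gives $\abs{\tb(\leg) - \tb(\leg')} \le 2\LagGenus(\leg, \leg')$. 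Finally, if $\leg \qconc \leg'$ we may take a zigzag cobordism each of whose constituent cobordisms is a concordance, so every $g(L^<_i) = g(L^>_i) = 0$ and the displayed identity forces $\tb(\leg) = \tb(\leg')$.

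There is no deep difficulty in this argument; the only step that genuinely requires attention is the bookkeeping caused by allowing the intermediate links $\leg_j$ and the apexes $\leg_{+,i}$ to have several components. One must check that the apex Thurston--Bennequin terms cancel in pairs within each cospan, that the remaining invariants telescope across cospans so that only the knot invariants $\tb(\leg)$ and $\tb(\leg')$ appear at the end, and that the boundary-component counts likewise telescope to zero --- all of which hinge on $\leg$ and $\leg'$ being knots and on each constituent cobordism being connected. This is precisely what lets the proof go through without fixing a convention for the Thurston--Bennequin or rotation number of a multi-component Legendrian link.
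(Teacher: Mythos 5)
Your proof is correct and takes essentially the same route as the paper: apply \fullref{prop:cob-classical} to both cobordisms in each cospan, cancel the apex terms, telescope, and bound the result by the genus of the zigzag (the paper phrases the bound via the non-positivity of each $\chi(L^<_i)$ and $\chi(L^>_i)$ rather than via genera and component counts, but the content is the same). Two small caveats that do not affect the conclusion: like the paper, you do implicitly invoke \fullref{prop:cob-classical} for multi-component ends, so a convention for the total $\tb$ and rotation number of a link is in fact being used even though it cancels from the final identity; and your identification of $\sum_i \bigl( g(L^<_i) + g(L^>_i) \bigr)$ with the genus of the chosen zigzag cobordism is an equality only when all intermediate links are knots --- in general it is a lower bound for the zigzag genus defined via the total Euler characteristic, which is the direction your estimate needs.
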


\begin{proof}
  The invariance of the rotation number follows from 
  \fullref{prop:cob-classical}, which implies that all Legendrians in 
  $\qcobordism(\leg, \leg')$ have the same rotation number.  The bound on the 
  difference between Thurston--Bennequin numbers follows from the estimate
  \begin{align*}
    \abs{\tb(\leg)-\tb(\leg')}
    &= \abs{\sum_{i=1}^n \paren{\chi(L^{<}_i) - \chi(L^{>}_i)}}\\
    &\leq \sum_{i=1}^n \paren{-\chi(L^{<}_i) -\chi(L^{>}_i)} \\
    &= -\chi(\qcobordism(\leg,\leg') \\
    &= 2g(\qcobordism(\leg,\leg')).
  \end{align*}
	
  We used the fact that each constituent cobordism has at least two boundary 
  components, and hence has non-positive Euler characteristic; we also used the 
  fact that the Legendrians at the ends are connected in the last line.
\end{proof}

Finally, we define the \dfn{Maslov number} $\mu( \qcobordism(\leg,\leg'))$ of a 
Lagrangian zigzag cobordism to be the greatest common divisor of the Maslov 
numbers of the constituent Lagrangian cobordisms.  Of particular interest are 
zigzag cobordisms with Maslov number $0$.
%we denote the associated relations defined using only Maslov-$0$ zigzag 
%cobordisms by $\qcob_0$ and $\qconc_0$ and the associated relative genus by 
%$\LagGenus^0$.\footnote{\js{I don't think we ever use this notation?  Check.}}  
See \fullref{sec:maslov-0} for further discussion of Maslov-$0$ zigzag 
cobordisms.

%\begin{definition} \label{defn:lqc}
%	The \dfn{Lagrangian cobordism monoid}, $\lqc(M,\xi)$ has elements consisting 
%	of the quotient of the set of oriented null-homologous Legendrian links in a 
%	contact manifold $(M,\xi)$ by the Lagrangian concordance relation, and the 
%	binary operation of connect sum between representatives of the equivalence 
%	classes. If $(M,\xi)$ is the standard contact $\R^3$, then we simply denote 
%	this $\lqc$.  
%\end{definition}

%\begin{lemma} \label{lem:metric-monoid}
%	The set $\lqc^\mu(M,\xi)$ has the structure of a monoid under connected sum, 
%	with metric $g^\mu_L$.
%\end{lemma}

% **********
\subsection{Comparison to smooth cobordism}
\label{ssec:lqc-vs-smooth}

With the definition and basic properties of Lagrangian zigzag concordance in 
hand, we next explore its relationship with smooth concordance.  In the proof 
of \fullref{prop:zz-cob-classical}, we have seen that the connectedness 
assumption leads to the Thurston-Bennequin number being invariant under 
Lagrangian zigzag concordance, indicating some level of rigidity in the 
relation.  In this section, we will prove \fullref{thm:disconnected-lqc}, which 
shows that without the connectedness condition, the notion of Lagrangian zigzag 
concordance is quite flexible, reducing to its smooth counterpart.

Write $\leg \qconcd \leg'$ if $\leg$ and $\leg'$ satisfy the definition of 
Lagrangian zigzag concordance, but with the intermediate Lagrangians allowed to 
be disconnected even though the total underlying smooth cobordism is still a 
cylinder; in this case, we say that $\leg$ and $\leg'$ are \dfn{disconnected 
  Lagrangian zigzag concordant}.  

%\begin{proposition} \label{prop:disconnected-lqc}
%	The Legendrian knots $\leg$ and $\leg'$ with $r(\leg) = r(\leg')$ are 
%	smoothly concordant if and only if they are disconnected Lagrangian 
%	quasi-concordant.
%\end{proposition}

The flexibility in disconnected Lagrangian zigzag concordance arises from the 
following construction.

\begin{lemma} \label{lem:double-stab}
  A Legendrian knot $\leg$ is disconnected Lagrangian zigzag concordant to its 
  double stabilization $S_{+-}(\leg)$.
\end{lemma}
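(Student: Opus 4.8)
The plan is to realize $\leg \qconcd S_{+-}(\leg)$ by a single Lagrangian cospan whose apex is the split Legendrian link $\leg \disjunion \maxunknot$, where $\maxunknot$ is the maximal Thurston--Bennequin Legendrian unknot. The first leg is a Lagrangian cobordism $L_1$ from $\leg$ to $\leg \disjunion \maxunknot$, namely the disjoint union of the trivial cylinder over $\leg$ with a Lagrangian $0$-handle (the Lagrangian disk filling the newly born component $\maxunknot$), as on the left of Figure~\ref{fig:decomposable_moves}; this is an exact Lagrangian cobordism that is disconnected, has each component of genus $0$, and satisfies $\chi(L_1) = 1$.

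The second leg is a Lagrangian cobordism $L_2$ from $S_{+-}(\leg)$ to $\leg \disjunion \maxunknot$, obtained by attaching a single Lagrangian $1$-handle (pinch move) to the two strands of the adjacent stabilization ``zigzag'' in a front diagram of $S_{+-}(\leg)$; this pinch severs the trivial loop carrying the two stabilizations, leaving $\leg$ together with a small split unknot. I would verify in the front projection that the split-off component is exactly the maximal unknot $\maxunknot$ and that the other component is $\leg$ itself. As a consistency check, \fullref{prop:cob-classical} forces a connected Lagrangian cobordism with these three boundary circles to satisfy
\[
  \bigl(\tb(\leg) + \tb(\maxunknot)\bigr) - \tb(S_{+-}(\leg)) = -\chi(L_2),
\]
and the identity $\bigl(\tb(\leg)-1\bigr) - \bigl(\tb(\leg)-2\bigr) = 1 = -\chi(L_2)$ both pins down the claimed identification and dictates the direction of $L_2$ (the doubly stabilized knot must occupy the negative end); the rotation numbers agree because $\rot(S_{+-}(\leg)) = \rot(\leg)$. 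Thus $L_2$ is a connected exact Lagrangian cobordism of genus $0$ with $\chi(L_2) = -1$.

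Assembling $L_1$ and $L_2$ into the cospan
\begin{equation*}
  \begin{tikzcd}[column sep=small, row sep=small]
    & \leg \disjunion \maxunknot & \\
    \leg \ar[ur, "L_1"] & & S_{+-}(\leg) \ar[ul, "L_2"']
  \end{tikzcd}
\end{equation*}
it remains to check that the underlying smooth cobordism $L_1 \cup_{\leg \disjunion \maxunknot} \overline{L_2}$ is a cylinder. Capping the $\maxunknot$-boundary circle of the pair of pants $L_2$ with the $0$-handle disk from $L_1$ turns $L_2$ into an annulus running from $S_{+-}(\leg)$ to $\leg$, and gluing on the trivial cylinder over $\leg$ from $L_1$ leaves an annulus, i.e.\ a smooth cylinder (consistently, $\chi(L_1) + \chi(L_2) = 0$). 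Hence $\leg$ and $S_{+-}(\leg)$ are disconnected Lagrangian zigzag concordant.

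The step I expect to be the main obstacle is the front-projection verification in the second leg: one must confirm that pinching the doubly stabilized arc splits off precisely a \emph{maximal} Legendrian unknot and returns the other component to $\leg$ on the nose, since classical invariants alone do not determine a Legendrian type. In practice this is immediate once the local diagram is drawn, because the pinch literally undoes the two stabilizations; the Euler characteristic and Thurston--Bennequin computations above serve only as a check. A minor point worth recording is that, since we are establishing $\qconcd$ rather than $\qcob$, the disconnected cobordism $L_1$ — a disjoint union of a genus-$0$ cylinder and a genus-$0$ disk — is an admissible leg of the cospan, provided one verifies (as above) that the end-to-end cobordism is a cylinder.
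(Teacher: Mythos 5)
Your construction is exactly the paper's: its proof is the figure exhibiting precisely this cospan, with apex $\leg \disjunion \maxunknot$, a single Lagrangian $1$-handle on the $S_{+-}(\leg)$ side, and a cancelling $0$-handle (together with the trivial cylinder over $\leg$) on the other side, so that the glued smooth cobordism is a cylinder. One terminological caution: read upward from $S_{+-}(\leg)$, the saddle appears in the front as the cancellation of the two facing cusps at the neck of the stabilization loop (severing it to leave $\leg \disjunion \maxunknot$), whereas a literal ``pinch of two parallel strands'' is this same move read from the apex down to $S_{+-}(\leg)$ --- your $\tb$ and Euler characteristic checks already place the ends correctly, so this is a wording slip rather than a gap.
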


\begin{proof}
  The zigzag concordance is constructed in \fullref{fig:double-stab}.
  \begin{figure}[!htb]
    \labellist
    \small\hair 2pt
    \pinlabel {$\leg$} [b] at 220 40
    \pinlabel {$S_{+-}(\leg)$} [b] at 100 20
    \pinlabel {$\leg_+$} [b] at 170 200
    \endlabellist

    \includegraphics{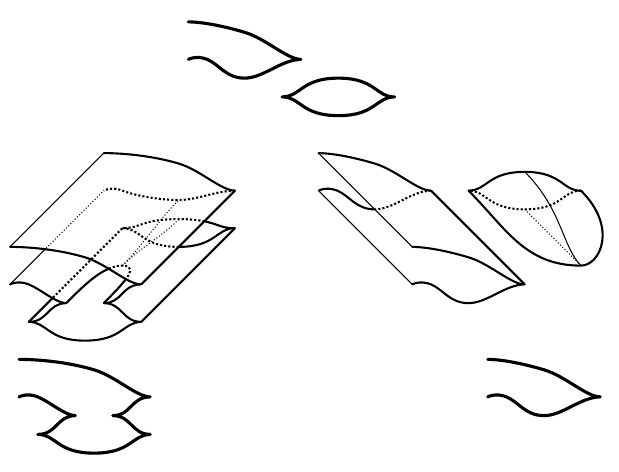}
    \caption{A disconnected Lagrangian zigzag concordance between $\leg$ and 
      $S_{+-}(\leg)$.  The Lagrangian on the left is constructed from a 
      $1$-handle, and the concordance on the right comes from a (cancelling) 
      $0$-handle. The surfaces pictured are front diagrams of the Legendrian 
      lifts of the exact Lagrangian cobordisms.}
    \label{fig:double-stab}
  \end{figure}
\end{proof}

\begin{corollary} \label{cor:disconnected-smooth-isotopy}
  If two Legendrian links $\leg_1$ and $\leg_2$ are smoothly isotopic and the 
  corresponding components have the same rotation numbers, then they are 
  disconnected Lagrangian zigzag concordant.
\end{corollary}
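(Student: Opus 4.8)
The plan is to bootstrap from Lemma \ref{lem:double-stab} together with the classical fact that smoothly isotopic Legendrian links become Legendrian isotopic after sufficiently many stabilizations. First I would recall that two Legendrian links $\leg_1, \leg_2$ that are smoothly isotopic, with corresponding components sharing rotation numbers, admit front diagrams that differ by a sequence of Legendrian Reidemeister moves once each is stabilized an appropriate (and, after matching rotation numbers, equal on each component) number of times by pairs $S_{+-}$; this is the usual consequence of Fuchs--Tabachnikov. So there is a common Legendrian link $\leg$ obtained from each of $\leg_1$ and $\leg_2$ by applying some number of double stabilizations $S_{+-}$ to each component (the number on a given component being the same on the $\leg_1$-side and the $\leg_2$-side because the rotation numbers agree).

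Next I would assemble the disconnected zigzag. Since Legendrian isotopic links are Lagrangian concordant via the trace of the isotopy (a genuine, connected concordance), it suffices to show that each of $\leg_1$ and $\leg_2$ is disconnected Lagrangian zigzag concordant to $\leg$. For a single component, Lemma \ref{lem:double-stab} gives a disconnected Lagrangian zigzag concordance from a knot to its double stabilization; iterating this and taking disjoint unions with cylindrical (identity) cobordisms over the remaining components of the link yields, at each step, a disconnected Lagrangian zigzag concordance from a link to the link obtained by one further $S_{+-}$ on one chosen component. Composing these zigzags — concatenating the cospans in Diagram \eqref{eq:zz}, which is legitimate since $\qconcd$ is transitive — produces a disconnected Lagrangian zigzag concordance from $\leg_1$ to $\leg$, and likewise from $\leg_2$ to $\leg$. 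Reading the $\leg_2$-zigzag backwards (Lagrangian concordances need not be invertible, but cospans are symmetric in their two legs, so a zigzag of cospans can be traversed in either direction) and splicing gives a disconnected Lagrangian zigzag concordance from $\leg_1$ to $\leg_2$, as desired.

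The one point requiring care — and the main obstacle — is the bookkeeping that matches stabilization counts component-by-component using the hypothesis on rotation numbers: one must check that after Legendrian-stabilizing $\leg_1$ and $\leg_2$ to a common Legendrian representative, the two sides really do apply the \emph{same} multiset of $S_{+-}$'s to each component, so that the assembled zigzags have matching endpoints. This is where the equality of corresponding rotation numbers is used, since a single $S_{+-}$ preserves the rotation number while individual stabilizations $S_\pm$ change it by $\mp 1$; requiring the zigzag pieces from Lemma \ref{lem:double-stab} forces all stabilizations to come in $S_{+-}$ pairs, and the rotation-number hypothesis guarantees this is possible with equal counts. Everything else is routine: disjoint union of Lagrangian cobordisms, concatenation of cospans, and the observation that a cospan is symmetric in its two legs so a disconnected Lagrangian zigzag concordance can be reversed.
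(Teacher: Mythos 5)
Your argument is essentially the paper's: invoke Fuchs--Tabachnikov to make the two links Legendrian isotopic after double stabilizations, use \fullref{lem:double-stab} (componentwise, padding with cylinders) to connect each link to its stabilization by a disconnected zigzag, use the trace of the Legendrian isotopy in the middle, and finish by transitivity and symmetry of $\qconcd$. The one correction: your ``main obstacle'' is a non-issue, and your resolution of it is actually false as stated --- the number of $S_{+-}$'s applied on the $\leg_1$-side need \emph{not} equal the number on the $\leg_2$-side (e.g.\ take $\leg_2 = S_{+-}(\leg_1)$, where the Thurston--Bennequin numbers differ, so equal counts are impossible); Fuchs--Tabachnikov only gives $S_{+-}^m(\leg_1)$ Legendrian isotopic to $S_{+-}^n(\leg_2)$ with possibly $m \neq n$, and that is all the argument needs, since both sides still reach a common Legendrian representative. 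The rotation-number hypothesis is used exactly where you say --- to ensure the stabilizations can be taken in balanced $S_{+-}$ pairs, which is what \fullref{lem:double-stab} can absorb --- not to equalize the counts.
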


\begin{proof}
  Under the hypotheses of the corollary, Fuchs and Tabachnikov 
  \cite[Theorem~4.4]{FucTab97:LegTransKnots} show that there exist $m, n \in 
  \Z_{\geq 0}$ such that the $m$-fold double stabilization $S_{+-}^m (\leg_1)$ 
  is Legendrian isotopic to $S_{+-}^n (\leg_2)$.  \fullref{lem:double-stab} 
  shows that $\leg_1 \qconcd S_{+-}^m (\leg_1)$ and that $\leg_2 \qconcd 
  S_{+-}^n (\leg_2)$, while the fact that Legendrian isotopy induces a 
  Lagrangian cobordism shows that $S_{+-}^m (\leg_1) \qconcd S_{+-}^n 
  (\leg_2)$. The proof then follows from the transitivity of the Lagrangian 
  zigzag concordance relation.
\end{proof}

\begin{proof}[Proof of \fullref{thm:disconnected-lqc}]
  The reverse direction is clear, so we need only prove that if $F \subset \R^3 
  \times [0,1]$ is a smooth concordance between $\leg$ and $\leg'$, then $\leg$ 
  and $\leg'$ are disconnected Lagrangian zigzag concordant.  Let $r$ denote 
  the common rotation number of $\leg$ and $\leg'$.
	
  The first step is to decompose $F$ into a sequence of elementary cobordisms 
  \[F = F_1 \concat \cdots \concat F_n,\]
  where each $F_i$ is the trace of an isotopy or the attachment of an 
  $m$-handle for $m=0,1,2$. Denote by $K_{i-1}$ and $K_i$ the links that form 
  the bottom and top boundaries, respectively, of $F_i$. 

  Let $\gamma$ be an oriented smooth embedded path in $F$ that begins on $\leg 
  = K_0$, ends on $\leg' = K_n$, intersects each homologically non-trivial 
  component of the simple multicurve $K_i \subset F$ exactly once, and does not intersect 
  any null-homologous component. Such a path exists by an argument that 
  successively removes innermost arcs that arise when a candidate path 
  intersects a component of $K_i$ at least twice.  A consequence of this 
  construction is that if $F_i$ is the attachment of a $0$-handle (resp.\ a 
  $2$-handle) and $K_i^0$ is the component of $K_i$ created by the $0$-handle 
  (resp.\ $K_{i-1}^2$ is the component erased by the $2$-handle), then $\gamma$ 
  does not intersect $K_i^0$ (resp.\ $K_{i-1}^2$).  We say that a Legendrian 
  representative $\leg_i$ of $K_i$ is \dfn{$\gamma$-compatible} if the rotation 
  number is $r$ for all components that $\gamma$ intersects in an upward 
  direction with respect to the $[0,1]$-component, is $-r$ for all components 
  $\gamma$ intersects in a downward direction, and is $0$ otherwise.

  The proof now proceeds inductively, one elementary cobordism at a time, by 
  showing that if $\leg_{i-1}$ is a $\gamma$-compatible Legendrian 
  representative of the link $K_{i-1}$, then there exists a $\gamma$-compatible 
  Legendrian representative $\leg_i$ of $K_i$ with $\leg_{i-1} \qconcd \leg_i$.  
  We may start the inductive process since $K_0 = \leg$ is a 
  $\gamma$-compatible Legendrian knot by construction.  

  We prove the inductive claim in four cases:
  \begin{description}
    \item[Isotopy] Choose a Legendrian representative $\leg_i$ of $K_i$.  By 
      further stabilizing each component of $\leg_i$ appropriately, we may 
      assume that $\leg_i$ is $\gamma$-compatible.  In particular, we see that 
      corresponding components of $\leg_{i-1}$ and $\leg_i$ have the same 
      rotation numbers.  \fullref{cor:disconnected-smooth-isotopy} then implies 
      that $\leg_{i-1}$ and $\leg_i$ are disconnected Lagrangian zigzag 
      concordant.

    \item[$0$-handle] Simply take $\leg_i$ to be the result of attaching a 
      Lagrangian $0$-handle to $\leg_{i-1}$.  Since the component of $K_i$ 
      birthed by a $0$-handle cannot intersect $\gamma$, we conclude that 
      $\leg_i$ is $\gamma$-compatible if $\leg_{i-1}$ is.

    \item[$1$-handle]  Let $\beta: [-1,1] \times [-\delta,\delta] \to \R^3$ be 
      the embedded band along which the $1$-handle is attached, with 
      $\beta(\{-1,1\} \times [-\delta, \delta]) \subset \leg_{i-1}$.  Perform 
      an isotopy on $\beta$ supported in $(-2\epsilon, 2\epsilon) \times 
      [-\delta,\delta]$, for sufficiently small $\epsilon>0$, so that along 
      $[-\epsilon,\epsilon] \times [\delta,\delta]$, the result of the isotopy 
      matches the band in the standard Lagrangian $1$-handle (see \cite{Dim16:LegAmbSurg, EkhHonKal16:LagCob}. Call the 
      resulting embedding of the band $\beta'$. 

      Let $\leg'_{i-1}$ be a Legendrian link that agrees with $\leg_{i-1}$ off 
      of $\beta(\{-1,1\} \times [-\delta, \delta])$, contains 
      $\beta'(\{-\epsilon,\epsilon\} \times [-\delta,\delta])$, and $C^0$ 
      approximates the curves defined by $\beta'$ that join the endpoints of 
      the curves above. By construction, $\leg_{i-1}$ and $\leg'_{i-1}$ are 
      smoothly isotopic. Add small stabilizations to $\leg'_{i-1}$ if necessary 
      to ensure that rotation numbers of corresponding components of 
      $\leg_{i-1}$ and $\leg'_{i-1}$ agree.  By the first case, above, we know 
      that $\leg_{i-1}$ and $\leg'_{i-1}$ are disconnected Lagrangian zigzag 
      concordant.  

      We then let $\leg_i$ be the result of attaching the Lagrangian $1$-handle 
      defined by $\beta'$ to $\leg'_{i-1}$.  To verify that $\leg_i$ is the 
      desired Legendrian link, we first note that it is, indeed, a 
      representative of the smooth knot $K_i$ since it was obtained from 
      $K_{i-1}$ by attaching a $1$-handle along a band isotopic to the original 
      band.  Second, we check that $\leg_i$ is $\gamma$-compatible. The key 
      property here is that the sum of the rotation numbers of the components 
      on one end of the Lagrangian $1$-handle cobordism is equal to the 
      rotation number of the component on the other end.  We conclude that 
      $\leg_i$ is $\gamma$-compatible, as either $\gamma$ intersects two 
      components on one end of the cobordism in opposite directions (and hence 
      the rotation number on the other end, which cannot intersect $\gamma$, is 
      $r-r = 0$) or intersects one component on either end of the cobordism 
      (and hence the rotation number of the non-intersected component must be 
      $0$ while the rotation numbers of the other components are both $\pm r$). 

    \item[$2$-handle]  We first note that the component $\leg^2_{i-1}$ of 
      $\leg_{i-1}$ that is erased by the $2$-handle does not intersect 
      $\gamma$, and hence has rotation number $0$.  Further, the component 
      $\leg^2_{i-1}$ is unknotted and unlinked from the rest of $\leg_{i-1}$.  
      Thus, by \fullref{cor:disconnected-smooth-isotopy}, the link $\leg_{i-1}$ 
      is Lagrangian zigzag concordant to a Legendrian link $\leg'_{i-1}$ with 
      the component $\leg^2_{i-1}$ replaced by a maximal unknot.  Further, $\leg_i$ is just the link $\leg_{i-1}$ with $\leg^2_{i-1}$ removed.  Thus, by attaching a Lagrangian $0$-handle to $\leg_i$, we obtain a Lagrangian cobordism from $\leg_i$ to $\leg'_{i-1}$. It follows that we have 
      $\leg_{i-1} \qconcd \leg'_{i-1} \qconc \leg_i$. The 
      $\gamma$-compatibility of $\leg_i$ follows immediately.
  \end{description}

  This completes the inductive argument. Together with transitivity of the 
  relation $\qconcd$, this suffices to prove the proposition since $\leg_n$ is 
  a Legendrian representative of the knot $K_n = \leg'$ and the 
  $\gamma$-compatibility of $\leg_n$ implies that $\leg_n$ and $\leg'$ have the 
  same rotation number, so \fullref{cor:disconnected-smooth-isotopy} shows that 
  $\leg_n \qconcd \leg'$.
\end{proof}

% **********
\subsection{Comparison to Lagrangian cobordism}
\label{sssec:lqc-vs-lagr}

\fullref{rem:zz-genus-neq-smooth-genus}, \fullref{prop:zz-cob-classical}, and 
\fullref{thm:disconnected-lqc} show that the Lagrangian zigzag concordance 
relation is more rigid than smooth concordance.  On the other hand, 
\fullref{thm:lqc-not-c}, which we will prove in this section, shows that 
Lagrangian zigzag concordance is more flexible that Lagrangian concordance. In 
particular, we find a pair of Legendrian knots $\leg$ and $\leg'$ that are 
Lagrangian zigzag concordant but not Lagrangian concordant in either direction.  
Our pair of knots is the one in \fullref{ex:zz-not-c} and 
\fullref{fig:lqc-not-c}, which we already know to be Lagrangian zigzag 
concordant.

To show these knots are not Lagrangian concordant in either direction, the 
first ingredient is Pan's obstruction to Lagrangian cobordisms via ruling 
polynomials \cite{Pan17:LagCobAug}, which are a type of skein relation 
invariant of Legendrian knots.  See the survey article 
\cite{Sab21:RulingsIntro} for an introduction to the notion of a ruling of a 
Legendrian link.

\begin{theorem}[{\cite[Corollary~1.7]{Pan17:LagCobAug}}]
  \label{thm:ruling-ineq}
  Suppose that there exists a spin exact Maslov-$0$ Lagrangian concordance from 
  $\leg_-$ to $\leg_+$. The ruling polynomials satisfy the following inequality 
  for any prime power $q$:
  \[ R_{\leg_-} (q^{1/2} - q^{-1/2}) \leq R_{\leg_+} (q^{1/2} - q^{-1/2}). \]
\end{theorem}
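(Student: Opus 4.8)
The plan is to reduce the inequality to a comparison of augmentation counts over finite fields, rather than to try to produce a map between rulings directly. The first ingredient is the ruling--augmentation correspondence (Henry--Rutherford, building on Fuchs, Sabloff, and Ng): for each prime power $q$ one has $R_\leg(q^{1/2} - q^{-1/2}) = \nu(\leg, q)\cdot N_q(\leg)$, where $N_q(\leg)$ is the Henry--Rutherford augmentation number of $\leg$ over $\F{q}$ --- a homotopy-weighted count of $\mathbb{Z}$-graded augmentations $\alg(\leg, \F{q}) \to \F{q}$ of the Legendrian contact homology DGA, with the grading dictated by the Maslov-$0$ hypothesis --- and $\nu(\leg, q)$ is a normalization factor depending only on $\tb(\leg)$, the number of components of $\leg$, and $q$. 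Since a Lagrangian concordance has $\chi = 0$, \fullref{prop:cob-classical} gives $\tb(\legm) = \tb(\legp)$ and $\rot(\legm) = \rot(\legp)$, while a concordance preserves the number of components, so $\nu(\legm, q) = \nu(\legp, q)$. The claimed inequality therefore follows once we establish $N_q(\legm) \leq N_q(\legp)$ for every prime power $q$.

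For the latter, I would use functoriality of Legendrian contact homology under exact Lagrangian cobordism (Ekholm--Honda--K\'alm\'an, henceforth EHK): a spin, exact, Maslov-$0$ Lagrangian cobordism $L$ from $\legm$ to $\legp$ induces a graded unital DGA morphism $\Phi_L \colon \alg(\legp, \F{q}) \to \alg(\legm, \F{q})$, well defined up to DGA homotopy. The spin condition orients the relevant disk counts and so permits $\F{q}$-coefficients for all $q$, and the Maslov-$0$ condition makes $\Phi_L$ grading-preserving, so precomposition carries $\mathbb{Z}$-graded augmentations to $\mathbb{Z}$-graded augmentations. Thus $\varepsilon \mapsto \varepsilon \circ \Phi_L$ defines a map $\mathrm{Aug}(\legm, \F{q}) \to \mathrm{Aug}(\legp, \F{q})$ which descends to DGA-homotopy classes, and it suffices to show that this map on homotopy classes is injective and compatible with the homotopy weights entering $N_q$.

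I expect that injectivity to be the crux: it says, roughly, that the EHK cobordism map is faithful enough on augmentations precisely when $L$ is a concordance. One promising route is to bring in the DGA of $L$ itself and fit $\Phi_L$ into a Sabloff--Traynor-type duality long exact sequence relating $\LCH_*(\legm)$, $\LCH_*(\legp)$, and the relative homology $H_*(L, \legm)$; when $g(L) = 0$ the relative homology is concentrated in one degree with rank pinned down by $\chi(L) = 0$, which should force $\Phi_L$ to induce injections in exactly the degrees that detect homotopy of augmentations. A more hands-on alternative is to lift a DGA homotopy between $\varepsilon_1 \circ \Phi_L$ and $\varepsilon_2 \circ \Phi_L$ through $\Phi_L$ to one between $\varepsilon_1$ and $\varepsilon_2$, using a homotopy-lifting property of the cobordism map special to genus-$0$ cobordisms. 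Either way, the remaining steps --- the ruling--augmentation dictionary, the invariance of the classical data under concordance, and the grading and sign bookkeeping for a spin Maslov-$0$ cobordism --- are citations and routine verifications; the genuinely delicate points are this injectivity and the check that the homotopy-weighted augmentation number (not merely the bare cardinality of augmentation sets) obeys the inequality, so that the normalization bookkeeping of the first step is legitimate.
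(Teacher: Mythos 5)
You should first note that the paper does not prove this statement at all: it is quoted verbatim from Pan \cite[Corollary~1.7]{Pan17:LagCobAug}, so the only meaningful comparison is with Pan's own argument. Your outline does reproduce the architecture of that argument: the Henry--Rutherford/NRSSZ correspondence expressing $R_\leg(q^{1/2}-q^{-1/2})$ in terms of a normalized, homotopy-weighted count of $\Z$-graded augmentations to $\F{q}$; the observation that $\chi(L)=0$ forces $\tb(\legm)=\tb(\legp)$ so the normalizations agree; and the Ekholm--Honda--K\'alm\'an DGA morphism $\Phi_L\colon \alg_{\legp}\to\alg_{\legm}$ (spin for general $q$, Maslov-$0$ for the grading), with augmentations pulled back in the correct direction.

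The gap is that the step you label as ``the crux'' is not a lemma to be supplied by a duality sequence or a homotopy-lifting trick --- it \emph{is} the main theorem of Pan's paper, and neither of your proposed routes is known to work. Pan proves injectivity on equivalence classes by upgrading $\Phi_L$ to a unital $A_\infty$ functor between the augmentation categories of Ng--Rutherford--Shende--Sivek--Zaslow and analyzing the induced maps on $\mathrm{Hom}$-complex cohomology for a concordance; a Sabloff--Traynor-type long exact sequence by itself only controls linearized homologies, which do not determine homotopy classes of augmentations, and no homotopy-lifting property of the EHK map through $\Phi_L$ is available (indeed such a statement would essentially be the theorem). Moreover, injectivity of the map on homotopy classes alone does not yield the inequality: the NRSSZ formula computes $R_\leg(q^{1/2}-q^{-1/2})$ as a homotopy cardinality in which each class is weighted by orders of cohomology groups of its endomorphism complex, so one must also show that a concordance matches these weights (Pan extracts this from unitality of the functor and the comparison of endomorphism cohomologies), a point you flag but do not address. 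As written, then, the proposal correctly assembles the citations surrounding the hard step but leaves the hard step --- and the weight compatibility needed to convert it into the stated inequality --- unproven.
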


%A few remarks are in order about this theorem before we apply it.  First, 

Note that we may apply the theorem to obstruct concordances between any two 
Legendrians with rotation number $0$, as a Lagrangian concordance between two 
knots of rotation number $0$ automatically has Maslov number $0$; further, any 
orientable $2$-manifold has a spin structure that may be restricted to spin 
structures along the boundary.  

%Second, as Pan points out in \cite[Section~5]{Pan17:LagCobAug}, if we restrict 
%to decomposable cobordisms, then the inequality becomes an inequality for each 
%coefficient of the ruling polynomials.  Finally, we note that Pan's original 
%theorem is slightly more general, as it encompasses  cobordisms of higher 
%genus at the expense of an extra factor on the right.  

The second ingredient is the Legendrian satellite $\Sigma(\leg, \Pi)$ of a 
pattern $\Pi \subset J^1S^1$ around a companion $\leg \subset \R^3$ (see 
\cite{EtnVer18:LegSatellites, Ng01:LegSatellites, NgTra04:LegTorusLinks}) and 
Cornwell, Ng, and Sivek's construction of satellites of concordances 
\cite{CorNgSiv16:LagConcObstructions}.

\begin{theorem}[{\cite[Theorem~2.4]{CorNgSiv16:LagConcObstructions}}] 
  \label{thm:satellite-concordance}
  Let $\Pi$ be a Legendrian pattern in $J^1S^1$.  If $\leg_- \prec \leg_+$, 
  then $\Sigma(\leg_-, \Pi) \prec \Sigma(\leg_+, \Pi)$.
\end{theorem}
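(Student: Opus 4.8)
\textbf{Plan of proof for \fullref{thm:satellite-concordance}.}
The statement asserts that the Legendrian satellite operation is functorial with respect to Lagrangian concordance: given a Lagrangian concordance $L$ from $\leg_-$ to $\leg_+$ and a fixed Legendrian pattern $\Pi \subset J^1 S^1$, one wants to produce a Lagrangian concordance from $\Sigma(\leg_-, \Pi)$ to $\Sigma(\leg_+, \Pi)$. The plan is to build this concordance geometrically by ``thickening'' $L$ to a piece of its Lagrangian neighborhood and inserting the trace of $\Pi$ inside it. More precisely, $L$ is an exact embedded Lagrangian in the symplectization $(\R \times \R^3, d(e^t \alpha))$; by the Lagrangian neighborhood theorem (in its version adapted to symplectizations, so that the cylindrical ends are respected) a neighborhood of $L$ is symplectomorphic to a neighborhood of the zero section in $T^*L$, in a way that identifies the cylindrical ends $\mathcal{E}_\pm(L)$ with the symplectization ends of $1$-jet neighborhoods $J^1(\legpm)$ of the boundary Legendrians. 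Over these ends the satellite pattern $\Pi \subset J^1 S^1$ gets reinterpreted, via the contactomorphism of a neighborhood of $\legpm$ with $J^1(\legpm) \cong J^1 S^1$, as the pattern that defines $\Sigma(\legpm, \Pi)$.

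The key steps, in order, are as follows. First, recall/record the contact-geometric setup for Legendrian satellites: a Legendrian knot $\leg$ has a standard contact neighborhood contactomorphic to $J^1 S^1$ (a fact I will cite from \cite{EtnVer18:LegSatellites, Ng01:LegSatellites}), and $\Sigma(\leg, \Pi)$ is by definition the image of $\Pi$ under this contactomorphism. Second, upgrade this to cobordisms: a Lagrangian concordance $L \subset \R \times \R^3$ has a standard symplectic neighborhood symplectomorphic to a neighborhood of the zero section of $T^* (\R \times S^1) = \R \times J^1 S^1$ (using exactness of $L$ together with the Weinstein/Lagrangian neighborhood theorem, arranged to be cylindrical near the ends so it restricts over each end to the contact neighborhood from the first step). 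Third, inside this symplectic neighborhood, take the product of $\Pi$ with the $\R$-direction (equivalently, the trace of the constant isotopy of $\Pi$), giving an exact embedded Lagrangian cobordism $\Sigma(L, \Pi)$; by construction it has cylindrical ends equal to $\Sigma(\legm, \Pi)$ and $\Sigma(\legp, \Pi)$, it is embedded because $\Pi$ is embedded in a single fiber $J^1 S^1$ and we have taken a genuine product, and it is exact because the primitive of $L$ and the primitive of $\Pi$ combine (one checks $d$ of the sum of the two primitives restricts to $e^t\alpha$); finally it is a concordance because it is diffeomorphic to $\Pi \times [0,1]$ as a smooth surface and $\Pi$ is (a disjoint union of circles forming) a closed $1$-manifold, so genus is preserved from $L$, which is genus $0$. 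Fourth, conclude $\Sigma(\leg_-, \Pi) \prec \Sigma(\leg_+, \Pi)$.

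The main obstacle is Step 2, the ``parametrized Lagrangian neighborhood'' statement: one must produce a symplectic neighborhood of $L$ in the symplectization that is simultaneously (a) standard along the zero section (Weinstein), (b) compatible at the two ends with the contact neighborhood theorem for $\legpm$ used in defining the satellite, and (c) cylindrical (i.e.\ $\R$-invariant under the Liouville flow) on the ends so that the inserted Lagrangian genuinely has the required cylindrical ends rather than merely being asymptotic to them. The cleanest route is to first fix the contact neighborhoods of $\legm$ and $\legp$, extend them to $\R$-invariant symplectic neighborhoods over the cylindrical portions $\mathcal{E}_\pm(L)$, and then invoke a relative version of the Lagrangian neighborhood theorem to extend over the compact part of $L$ while fixing the data on $\mathcal{E}_-(L) \cup \mathcal{E}_+(L)$; exactness of $L$ is what lets the Liouville form be matched with $e^t\alpha$ throughout. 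I would also remark that this construction is the symplectic analogue of the smooth statement that a concordance satellites to a concordance, and that in the decomposable case it can alternatively be seen handle-by-handle, with isotopy traces satelliting to isotopy traces and $0$-/$1$-handle attachments to $0$-/$1$-handle attachments inside $J^1 S^1$; however, since the theorem as quoted is stated for arbitrary (not necessarily decomposable) Lagrangian concordances, the neighborhood-theorem argument is the one that proves it in full generality.
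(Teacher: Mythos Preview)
The paper does not give its own proof of \fullref{thm:satellite-concordance}; it is quoted as \cite[Theorem~2.4]{CorNgSiv16:LagConcObstructions} and used as a black box, so there is nothing in this paper to compare your argument against. Your outline is essentially the argument in the cited reference: Cornwell, Ng, and Sivek use the Weinstein neighborhood theorem to identify a neighborhood of the concordance $L$ with a neighborhood of the zero section in $T^*L \cong T^*(\R \times S^1)$, arranged to be cylindrical at the ends and matching the standard $J^1 S^1$ neighborhoods of $\legpm$, and then insert $\R \times \Pi$. Your identification of the relative/cylindrical form of the neighborhood theorem as the main technical point is accurate, and your remark about the decomposable case is consonant with the generalization in \cite{GuaSabYac22:LegSatellitesCob} that the paper mentions immediately after the statement.
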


There is a generalization of this theorem to Lagrangian cobordisms in 
\cite{GuaSabYac22:LegSatellitesCob}, though we will not need the full power of 
the results therein.

%\begin{figure}
%\includegraphics{Figures/slice-knots}
%\caption{Legendrian realizations of the Lagrangian slice knots $m9_{46}$ 
%(left) and $m12n_{768}$ (right).}
%\label{fig:lqc-not-c}
%\end{figure}

\begin{proof}[Proof of \fullref{thm:lqc-not-c}]  

  As shown in \fullref{ex:zz-not-c}, the Legendrian knots $\leg$ and $\leg'$ 
  from \fullref{fig:lqc-not-c} are Lagrangian zigzag concordant.

  %The former is a Legendrian realization of the $m9_{46}$ knot, while the 
  %latter is a Legendrian realization of the $m12n_{768}$.  As demonstrated in  
  %\cite[Section~4.3]{CorNgSiv16:LagConcObstructions}, both of these knots are 
  %Lagrangian slice --- in fact, they are Lagrangian slice via a decomposable 
  %cobordism to an unlink of maximal Legendrian unknots --- and hence these two 
  %knots are Lagrangian quasi-concordant.\footnote{\js{Would have to say a 
  %\emph{little} more if we only want to have our quasi-concordances go ``up,'' 
  %but it's simple because the slice disks are decomposable.} 
  %    \mw{I'm not sure that I understand this fully. First, do we want them to 
  %    go up? Second, we might need that extended handle graph thing to pull 
  %    them up.}}

  On the other hand, it suffices to show that the $\tb$-twisted Whitehead 
  doubles $\Wh(\leg)$ and $\Wh(\leg')$ are not Lagrangian concordant in either 
  direction using \fullref{thm:ruling-ineq}; the contrapositive of 
  \fullref{thm:satellite-concordance} then implies that $\leg$ and $\leg'$ are 
  also not concordant in either direction.  

  A tedious but straightforward computation shows that
  \begin{align*}
    R_{\Wh(\leg)}(z) &= 5+3z^2,\\
    R_{\Wh(\leg')}(z) &= 2+3z^2+3z^4+z^6.
  \end{align*}
  We then compute that
  \begin{align*}
    R_{\Wh(\leg)}\left(\sqrt{2} - 1/\sqrt{2}\right) &> 
    R_{\Wh(\leg')}\left(\sqrt{2} - 1/\sqrt{2}\right), \\
    R_{\Wh(\leg)}\left(\sqrt{3} - 1/\sqrt{3}\right) &< 
    R_{\Wh(\leg')}\left(\sqrt{3} - 1/\sqrt{3}\right).
  \end{align*}
  The result follows.
\end{proof}

\section{Global structure of the zigzag cobordism relation}
\label{sec:zz-graph}

In this section, we study the global structure, metric, and monoidal properties of the Lagrangian zigzag cobordism relation.  After constructing a weighted graph with edge metric that describes the zigzag cobordism relation, we explore
\begin{itemize}
\item The structure of the weighted graph, including its connectivity, 
  diameter, and links of vertices (\fullref{ssec:zz-graph});
\item The monoidal structure of the zigzag cobordism relation 
  (\fullref{ssec:zz-torsion}); and
\item Quasi-isometries of the graph given by the satellite construction 
  (\fullref{ssec:satellite}).
\end{itemize}

% ********************
\subsection{The Lagrangian zigzag cobordism graph}
\label{ssec:zz-graph}

To frame the study of the global structure of the Lagrangian zigzag cobordism 
relation, we take inspiration from Cochran and Harvey's study 
\cite{CocHar18:ConcGeom} of the smooth knot concordance group as a metric group 
with distance defined by the minimal smooth relative $4$-genus $g_4$ between 
concordance classes of knots.  In particular, we make the following definition:

\begin{definition} \label{defn:zz-graph}
  The Lagrangian zigzag cobordism graph $\LZC$ is the weighted graph defined by 
  the following:
	\begin{itemize}
    \item Its vertices correspond to equivalence classes of oriented Legendrian 
      knots under the Lagrangian zigzag concordance relation $\qconc$.
		
		\item A single edge exists between equivalence classes $[\leg_1]$ and $[\leg_2]$ if $\leg_1 \qcob \leg_2$.
		
		\item The weight of an edge between $[\leg_1]$ and $[\leg_2]$ is $\LagGenus(\leg_1, \leg_2)$.
	\end{itemize}
%	
%	The minimal genus $\LagGenus$ defines a metric on $\LZC$.
\end{definition}

Since $\LagGenus$ is clearly subadditive and symmetric, the weighted graph 
$\LZC$ is also a metric space.

We will henceforth drop the brackets from the notation for equivalence classes of Lagrangian zigzag concordance, denoting such an equivalence class by a representative.

%\begin{remark}
%	\js{Were we to restrict to the use of decomposable Lagrangian cobordism, we would have defined the zigzag cobordism graph to only have edges of weight $1$, and taken the metric to be given by the minimal path length in the graph.}
%\end{remark}

The remainder of this subsection is devoted to proving fundamental properties 
of the graph $\LZC$, as encapsulated in \fullref{prop:graph-structure}.  First, 
the connectivity of $\LZC$ is determined by the main result of 
\cite{SabVelWon21:MaxLeg}:

\begin{proposition} \label{prop:zz-connectivity}
  The graph $\LZC$ has exactly one connected component for each rotation 
  number.
\end{proposition}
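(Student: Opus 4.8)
The plan is to prove the two halves of the statement separately: that the rotation number is constant on each connected component of $\LZC$ (so distinct rotation numbers force distinct components, and there are at least as many components as values of $\rot$), and that any two Legendrian knots with a common rotation number lie in a single component (so there is at most one component for each rotation number).

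For the first half, I would invoke \fullref{prop:zz-cob-classical}: if $\leg \qcob \leg'$ then $\rot(\leg) = \rot(\leg')$, so $\rot$ is well defined on connected components of $\LZC$. Since every integer arises as the rotation number of some Legendrian knot — apply positive or negative stabilizations to the standard Legendrian unknot — this yields a well-defined surjection from the set of components onto $\Z$; in particular knots with distinct rotation numbers lie in distinct components.

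For the second half, which carries the actual content, I would appeal to the main result of \cite{SabVelWon21:MaxLeg}. Given Legendrian knots $\leg$ and $\leg'$ with $\rot(\leg) = \rot(\leg')$, that result produces a common bound — a Legendrian knot together with connected exact Lagrangian cobordisms relating it to both $\leg$ and $\leg'$. Read as an upper bound this is literally a Lagrangian cospan $\legjoin(\leg, \leg')$, so $\leg \qcob \leg'$; read as a lower bound, each of the two cobordisms induces a Lagrangian cospan via \fullref{ex:zz-from-c} (pairing it with the trivial cylinder on its top link), so $\leg$ and $\leg'$ are each zigzag cobordant to the common bound, and transitivity of the equivalence relation $\qcob$ gives $\leg \qcob \leg'$. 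Either way, all Legendrian knots of a fixed rotation number form a single component, so the surjection above is a bijection between connected components of $\LZC$ and $\Z$.

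The only real subtlety is bookkeeping: confirming that the cobordisms supplied by \cite[Theorem~1.1]{SabVelWon21:MaxLeg} are connected (as the definition of a Lagrangian cospan demands) and that the cited statement applies to knots of equal rotation number in the standard contact $\R^3$. Once that is checked, the proposition follows with no further geometric input, since the hard constructive work is carried out in \cite{SabVelWon21:MaxLeg}.
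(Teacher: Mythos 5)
Your proposal is correct and follows the paper's own route: the paper likewise derives the "at least as many components" half from the invariance of the rotation number (\fullref{prop:zz-cob-classical}) and the "at most one component per rotation number" half by citing the main cospan-construction theorem of \cite{SabVelWon21:MaxLeg}, which supplies exactly the connected Lagrangian cospan $\legjoin(\leg,\leg')$ you describe. The paper simply states this in one line, so your version is the same argument with the bookkeeping spelled out.
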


Next, we show that each component of $\LZC$ has infinite diameter.  While this 
may be proven using the fact that the underlying smooth cobordism graph has 
infinite diameter with respect to $g_4$---for example, one may take connected 
sums with $(2,n)$-torus knots---the proposition below shows that $\LZC$ 
contains finer structure than the smooth concordance group as a metric space.

\begin{proposition}\label{prop:diameter}
  For any Legendrian $\leg$ and $n \in \N$, there exists a Legendrian $\leg_n$ 
  such that $\LagGenus (\leg,\leg_n) = n$ and $g_4(\leg, \leg_n) = 0$.
\end{proposition}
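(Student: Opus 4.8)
The plan is to take $\leg_n$ to be the $n$-fold double stabilization $S_{+-}^{n}(\leg)$. Since stabilization does not change the underlying smooth knot type, $\leg_n$ is smoothly isotopic to $\leg$, so the product concordance realizes $g_4(\leg,\leg_n)=0$. It then remains to prove that $\LagGenus(\leg,\leg_n)=n$, which I would do by establishing the two inequalities separately.

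For the lower bound, recall that $\tb(S_{+-}(\leg))=\tb(\leg)-2$ and $\rot(S_{+-}(\leg))=\rot(\leg)$, so $\rot(\leg_n)=\rot(\leg)$ and $\tb(\leg_n)=\tb(\leg)-2n$. By \fullref{prop:zz-cob-classical}, $2\LagGenus(\leg,\leg_n)\ge\abs{\tb(\leg)-\tb(\leg_n)}=2n$, hence $\LagGenus(\leg,\leg_n)\ge n$.

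For the upper bound, since $\LagGenus$ is subadditive it suffices to prove the claim that for every Legendrian knot $\leg'$ there is a Lagrangian zigzag cobordism of genus $1$ between $\leg'$ and $S_{+-}(\leg')$; iterating then yields
\[
  \LagGenus(\leg,\leg_n)\le\sum_{k=1}^{n}\LagGenus\bigl(S_{+-}^{k-1}(\leg),\,S_{+-}^{k}(\leg)\bigr)\le n .
\]
To prove the claim, I would use the cospan with apex $\leg_+=\leg'$, taking $\leg'\to\leg_+$ to be the trivial cylindrical cobordism and $S_{+-}(\leg')\to\leg_+=\leg'$ to be a connected decomposable Lagrangian cobordism $L'$ of genus $1$, built in two steps. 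First, the Lagrangian $1$-handle move appearing as the left-hand leg of the cospan in the proof of \fullref{lem:double-stab} and drawn in \fullref{fig:double-stab} provides a pair-of-pants cobordism from $S_{+-}(\leg')$ to $\leg'\disjunion\maxunknot$. Second, attach a Lagrangian $1$-handle merging the newly created $\maxunknot$ back into $\leg'$, producing a pair-of-pants cobordism from $\leg'\disjunion\maxunknot$ to $\leg'\connsum\maxunknot=\leg'$. Concatenating, $L'$ is connected, has the two boundary components $S_{+-}(\leg')$ and $\leg'$, and has Euler characteristic $-2$, hence genus $1$; the resulting zigzag cobordism has genus $0+1=1$. (Its genus is in fact exactly $1$ by the $\tb$ estimate applied to $\leg'$ and $S_{+-}(\leg')$, but only the bound $\le 1$ is used above.)

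The only step that carries real content beyond routine bookkeeping is the first step in the construction of $L'$: that a Lagrangian $1$-handle attached at the double-stabilization region of $S_{+-}(\leg')$ splits off a maximal-$\tb$ unknot and leaves $\leg'$. This is precisely the $1$-handle cobordism already constructed and pictured in \fullref{lem:double-stab} and \fullref{fig:double-stab}, so no new argument is needed; the present proof simply post-composes it with a merging $1$-handle rather than pairing it with a cancelling $0$-handle. Combining the two inequalities gives $\LagGenus(\leg,\leg_n)=n$ with $g_4(\leg,\leg_n)=0$, which also shows that every component of $\LZC$ has infinite diameter and that $\LZC$ is strictly finer than the smooth concordance group as a metric space.
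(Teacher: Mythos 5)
Your argument is correct, but it takes a different route from the paper. The paper keeps $\leg$ fixed and sets $\leg_n = \leg \connsum n\leg'$, where $\leg'$ is the maximal Legendrian representative of the smoothly slice knot $m(6_1)$ with $\tb(\leg')=-3$; the connected-sum formula gives $\abs{\tb(\leg)-\tb(\leg_n)}=2n$, and the lower bound $\LagGenus(\leg,\leg_n)\geq n$ then follows from \fullref{prop:zz-cob-classical} exactly as in your proof, while $g_4(\leg,\leg_n)=0$ comes from sliceness of $m(6_1)$. You instead take $\leg_n=S_{+-}^n(\leg)$, which has the mild advantage that $\leg_n$ is smoothly \emph{isotopic} (not merely concordant) to $\leg$, and the same $\tb$-defect mechanism gives the lower bound. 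Where you go beyond the paper's written proof is the upper bound: you exhibit an explicit connected genus-$1$ Lagrangian cobordism from $S_{+-}(\leg')$ to $\leg'$ by first using the splitting $1$-handle from \fullref{lem:double-stab} (yielding $\leg'\disjunion\maxunknot$) and then a merging $1$-handle cancelling the unknot, and conclude by subadditivity of $\LagGenus$; this is sound (the merge is the standard $0$/$1$-handle cancellation picture, and note it is essentially the same genus-$1$ cobordism from $S_{+-}(\leg)$ to $\leg$ that the paper constructs later in \fullref{lem:maslov-0-stab}, and the $n=1$ unknot case is already asserted in \fullref{rem:zz-genus-neq-smooth-genus}). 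By contrast, the paper's proof as written only verifies $\LagGenus(\leg,\leg_n)\geq n$ for its examples, which is what is actually needed for the infinite-diameter statement, leaving the matching upper bound implicit; your version makes the equality self-contained, at the cost of using a slightly less "exotic" family (stabilizations rather than knots of different smooth type).
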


\begin{proof}
  Let $\leg'$ be the maximal Legendrian representative of the $m (6_1)$ knot, 
  pictured in \fullref{fig:6_1}.
  \begin{figure}[!htb]
    \includegraphics{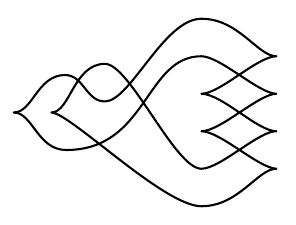}
    \caption{The maximal Legendrian $m (6_1)$ knot.}
    %\caption{The Legendrian $m (6_1)$ knot pictured is smoothly slice but not 
    %Lagrangian slice since its Thurston--Bennequin number is $-3$.}
		\label{fig:6_1}
	\end{figure}
  
  This knot is smoothly slice, but since its Thurston--Bennequin number is 
  $-3$, it is not Lagrangian slice.  Consider $\leg_n = \leg \connsum n \leg'$.  
  On one hand, this knot satisfies $g_4(\leg, \leg_n) =0$.  On the other hand, 
  we may compute that \[
    \abs{\tb(\leg) - \tb(\leg_n)} = 2n,
  \]
  and hence \fullref{prop:zz-cob-classical} implies that $\LagGenus(\leg, 
  \leg_n) \geq n$.
\end{proof}

Local properties of the $\LZC$ graph may be stated using the $1$-link of a 
vertex $\leg$, which we define to be the set of vertices that have distance 
exactly $1$ from $\leg$.

\begin{proposition}\label{prop:valence}
	Given a Legendrian knot $\leg$, the cardinality of its $1$-link is countably infinite.  In particular, there exists a sequence of Legendrian knots $\leg_i$ so that $\LagGenus(\leg,\leg_i) = 1$ but $\leg_i \not \qconc \leg_j$ for $i \neq j$.
\end{proposition}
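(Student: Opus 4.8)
The statement has two halves: that the $1$-link of $\leg$ is at most countable, and that it is at least countably infinite. The first is soft. There are only countably many Legendrian knots up to Legendrian isotopy---every Legendrian knot in $\R^3$ is Legendrian isotopic to the Legendrian realization of a grid (rectangular) diagram, and there are only countably many grid diagrams---hence only countably many vertices of $\LZC$, so the $1$-link of $\leg$, being a subset of the vertex set, is at most countable.

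For the second half, the plan is to exhibit an infinite sequence $\leg_1, \leg_2, \dotsc$ of Legendrian knots with $\LagGenus(\leg, \leg_i) = 1$ that are pairwise \emph{not} Lagrangian zigzag concordant. I would set $\leg_i := \leg \connsum \leg'_i$, where $\{\leg'_i\}$ is a sequence of Legendrian knots whose underlying smooth knot types $J_i$ are pairwise non-concordant and non-slice and which satisfy $\LagGenus(\maxunknot, \leg'_i) = 1$. Granting such a family, the three needed properties are quick. First, $\LagGenus(\leg, \leg_i) \le 1$: take a genus-$1$ Lagrangian zigzag cobordism realizing $\LagGenus(\maxunknot, \leg'_i) = 1$ and form the connected sum of each of its constituent cobordisms (and hence cospans) with the trivial cylinder over $\leg$; since connected sum with a cylinder carries exact Lagrangian cobordisms to exact Lagrangian cobordisms and adds $0$ to the genus, the result is a genus-$1$ Lagrangian zigzag cobordism from $\leg \connsum \maxunknot = \leg$ to $\leg \connsum \leg'_i = \leg_i$. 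Second, $\LagGenus(\leg, \leg_i) \ge 1$: since $J_i$ is not slice, $\leg_i$ is not smoothly concordant to $\leg$; as $\qconc$ refines smooth concordance, $\leg_i \not\qconc \leg$, and since $\LagGenus$ is a non-negative integer that vanishes exactly on zigzag-concordant pairs, this forces $\LagGenus(\leg, \leg_i) \ge 1$. Third, $\leg_i \not\qconc \leg_j$ for $i \ne j$: a zigzag concordance between $\leg_i$ and $\leg_j$ would yield a smooth concordance between $\leg \connsum J_i$ and $\leg \connsum J_j$, hence between $J_i$ and $J_j$, contradicting pairwise non-concordance. Thus the $\leg_i$ lie in the $1$-link of $\leg$ and represent infinitely many distinct vertices.

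It remains---and this is the one substantive point, where the examples suggested by Miller come in---to produce the family $\{\leg'_i\}$, and here the two requirements on it pull against each other. For $\LagGenus(\maxunknot, \leg'_i) = 1$ it suffices that $\leg'_i$ bound a genus-$1$ exact Lagrangian filling: then $\tb(\leg'_i) = 1$, so $\leg'_i \not\qconc \maxunknot$ and $\LagGenus(\maxunknot, \leg'_i) \ge 1$ by \fullref{prop:zz-cob-classical}, while puncturing the filling at an interior point produces a genus-$1$ exact Lagrangian cobordism from $\maxunknot$ to $\leg'_i$, so $\LagGenus(\maxunknot, \leg'_i) \le 1$ (more flexibly, it would suffice that $\leg_i$ be the top of a single decomposable genus-$1$ Lagrangian cobordism from $\leg$). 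Pairwise non-concordance of the $J_i$, on the other hand, demands genuine smooth complexity, and the usual quick invariants $\tau$ and $s$ are useless here since they are bounded in terms of $g_4 = 1$; I would instead separate the $J_i$ by determinants, using the classical fact that a slice knot has square determinant---if the $J_i$ have pairwise distinct odd prime determinants $p_i$, then $J_i \connsum m J_j$ has determinant $p_i p_j$, not a perfect square, hence is not slice, so $J_i$ is not smoothly concordant to $J_j$, and each $J_i$ is itself non-slice. The concrete task, then, is to build an infinite family of genus-$1$ knots with prescribed odd prime determinants that each retain a genus-$1$ Lagrangian filling; I would look among genus-$1$ twisted doubles or satellites of the right-handed trefoil that inherit a decomposable genus-$1$ filling from the Milnor fiber filling of the trefoil. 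Carrying out this last step---keeping the knots simultaneously Lagrangian-fillable of genus $1$ and smoothly intricate enough to be pairwise distinct---is, I expect, the crux of the argument.
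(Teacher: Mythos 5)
Your overall framework is sound and matches the paper's skeleton: reduce to a family $\leg'_i$ of Legendrian knots admitting a genus-$1$ Lagrangian (zigzag) cobordism from $\maxunknot$ whose smooth types are pairwise non-concordant and non-slice, set $\leg_i = \leg \connsum \leg'_i$, get the upper bound by connect-summing with the trivial cylinder (the paper outsources this bookkeeping to \cite[Lemma~6.7]{SabVelWon21:MaxLeg}), the lower bound from non-sliceness plus the fact that $\qconc$ refines smooth concordance (or directly from $\tb$ via \fullref{prop:zz-cob-classical}), and pairwise distinctness by cancellation in the smooth concordance group; the countability half is also fine and is left implicit in the paper. The problem is that the one step you defer---actually producing an infinite family with the required combination of Lagrangian flexibility and smooth rigidity---is the substantive content of the proposition, and you explicitly leave it as a hope ("I would look among genus-$1$ twisted doubles or satellites of the right-handed trefoil\dots this is, I expect, the crux"). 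Nothing in the proposal establishes that even one such knot exists, let alone infinitely many, so the proof is incomplete exactly at its crux.

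For comparison, the paper takes $\leg'_i$ to be the Legendrian representative of the pretzel knot $K_i = P(-3,-5,-(2i+1))$, which is visibly obtained from the maximal Legendrian unknot by attaching two Lagrangian $1$-handles---a single decomposable genus-$1$ cobordism, no exact filling required---and then distinguishes the smooth concordance classes pairwise by the Fox--Milnor condition: $\Delta_{K_i}(t) = (4i+6)t^2 + (8i+11)t + (4i+6)$ is irreducible of degree $2$ and distinct for each $i$, so no product $\Delta_{K_r}(t)\Delta_{K_s}(t)$ with $r \neq s$ has the form $t^k f(t) f(t^{-1})$. Two further cautions about your proposed route: insisting on a genus-$1$ exact Lagrangian \emph{filling} (which forces $\tb = 1$ and hence maximal $\tb$ equal to $2g_4 - 1$) is stronger than needed and shrinks the candidate pool considerably; and doubling patterns have winding number $0$, so the Legendrian satellite construction does not transport a filling of the trefoil to a filling (or even an obvious cobordism from $\maxunknot$) of its double, making that specific family of candidates doubtful. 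A single decomposable genus-$1$ cobordism from $\maxunknot$---your parenthetical weaker requirement---is the right target, and Alexander-polynomial/Fox--Milnor or determinant obstructions are indeed the right tools for the pairwise non-concordance, but the family itself still has to be exhibited and verified.
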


\begin{proof}
  Given $i>0$, let $\leg'_i$ be the Legendrian representative of the pretzel 
  knot $K_i =P(-3, -5, -(2i+1))$. As illustrated in \fullref{fig:pretzel_knot}, 
  there is a genus~$1$ Lagrangian cobordism from the maximal Legendrian unknot 
  $\maxunknot$ to $\leg'_i$.  By \cite[Lemma 6.7]{SabVelWon21:MaxLeg}, we then 
  have $\LagGenus(\maxunknot,\leg'_i) = 1$.
  \begin{figure}[!htb]
    \labellist
    \small\hair 2pt
    \pinlabel {$2i+1$} [r] at 158 158
    \endlabellist
    \includegraphics{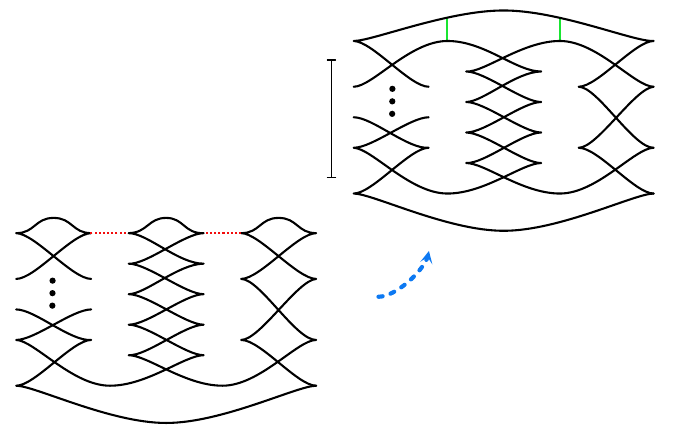}
		\caption{By attaching two $1$-handles to a maximal Legendrian unknot, we obtain a genus $1$ Lagrangian cobordism from that unknot to $\leg'_i$.}
		\label{fig:pretzel_knot}
	\end{figure}
	
%	\begin{figure}
%		\centerline{\includegraphics[width=3in]{Figures/pretzel_seifert}}
%		\caption{The Seifert surface of $K_r$.}
%		\label{fig:pretzel_knot_seifert}
%	\end{figure}
	
%	A Seifert surface of $K_r$ is given in Figure~\ref{fig:pretzel_knot_seifert}. 

  It is straightforward to compute that the Alexander polynomial of $K_i$ is 
  given by
  \[\Delta_{K_i}(t) =  (4i+6)t^2+(8i+11)t+4i+6,\]
  which has degree $2$ and is irreducible and distinct for each $i$. Then for 
  any $K_{s}$, $r\neq s$,
  \[\Delta_{K_r\connsum K_{s}}(t) = \Delta_{K_r}(t)\Delta_{K_{s}}(t) \neq 
    t^kf(t)f(t^{-1})
  \]
  for any polynomial $f$ and integer $k$. Thus no two such pretzel knots are smoothly concordant.

  Now, given a Legendrian $\leg$, let $\leg_i = \leg \connsum \leg'_i$. Then, 
  again using \cite[Lemma 6.7]{SabVelWon21:MaxLeg}, we have 
  $\LagGenus(\leg,\leg_i) = 1$ but $\leg_i \not \qconc \leg_j$ for $i \neq j$ 
  since they cannot be smoothly concordant.
\end{proof}

%\begin{remark}
%	\js{Unlike in Proposition~\ref{prop:diameter}, the proof immediately above relies strongly on the smooth properties of the Legendrians $\leg_i$.}\footnote{\js{Include?  It's honest, but weakens the paper.  Maybe include as an open question instead?}}
%\end{remark}

\begin{proof}[Proof of \fullref{prop:graph-structure}]
  The proposition is a combination of \fullref{prop:zz-connectivity}, 
  \fullref{prop:diameter}, and \fullref{prop:valence}.
\end{proof}

% ********************
\subsection{Monoidal structure}
\label{ssec:zz-torsion}

In this section, we turn to the algebraic structure of $\LZC$.  The monoidal 
structure on $\LZC$ is defined by connected sum $\connsum$ 
\cite{EtnHon03:LegConnSum}.  That the connected sum operation is well defined 
on $\LZC$ follows from \cite[Corollary~5.1]{GuaSabYac22:LegSatellitesCob}.

In the smooth setting, the connected sum becomes a group operation when we 
descend to concordance classes: In particular, inverses exist since the 
connected sum of a knot and its mirror reverse is smoothly slice.  In the 
Legendrian setting, while we still have a unit given by the maximal unknot 
$\maxunknot$, and the connected sum operation is still associative, evidence 
points to the idea that inverses \emph{never} exist.  Broadly speaking, the 
obstruction is that Legendrian realizations of smooth mirrors are not 
well behaved. We discuss two indications that this is, indeed, the case.  

To set notation, let $\leg$ be a Legendrian knot and $\leg'$ be a Legendrian 
realization of its mirror reverse; of course, $\leg \connsum \leg'$ is smoothly 
slice.  First, if $\leg$ is stabilized, then so is $\leg \connsum \leg'$, and 
hence it cannot be Lagrangian concordant to $\maxunknot$. It is unclear, 
however, if $\leg \connsum \leg'$ is Lagrangian zigzag concordant to 
$\maxunknot$, especially as it is unknown whether stabilizations are preserved 
under Lagrangian zigzag concordance  (though we know that they are \emph{not} 
preserved under Lagrangian zigzag \emph{cobordism}).  Second, assuming that 
$\leg$ and $\leg'$ have maximal $\tb$, a necessary condition for $\leg \connsum 
\leg'$ to be Lagrangian slice would be for $\tb(\leg) = - \tb(\leg')$, which 
would imply that $\tb(\leg \connsum \leg') = -1$. This condition, however, is 
not satisfied for any knot of $10$ crossings or fewer \cite{LivMoo:KnotInfo}.  
That said, it may be possible to find a Legendrian realization $\leg''$ of a 
smooth knot concordant to $\leg'$ that has the correct $\tb$.

\begin{openquestion}
  \label{openq:inverses}
  Does any non-trivial Lagrangian zigzag concordance class have an inverse with 
  respect to the connected sum operation? If not, does the connected sum 
  operation have the cancellation property?
\end{openquestion}

As a first step in investigating the algebraic structure of the $\LZC$ monoid, 
we  study torsion (or the lack thereof). The underlying result relates the 
maximal Thurston--Bennequin number $\overline{\tb}$ and the arc index $\arcindex$ of a smooth knot 
and its mirror.

\begin{proposition}
	\label{prop:max-tb-grid}
  For any smooth knot $K$, we have \[\overline{\tb}(K) + \overline{\tb}(m(K)) = 
    - \arcindex(K).\]
\end{proposition}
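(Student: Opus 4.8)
The equality $\overline{\tb}(K) + \overline{\tb}(m(K)) = -\arcindex(K)$ is a known relationship, and I would prove it by combining two classical inequalities into a pincer. The first is the Kauffman polynomial (or HOMFLY) bound on the maximal Thurston--Bennequin number: for any Legendrian representative of $K$, one has $\tb(\leg) + |\rot(\leg)| \leq -\deg_a F_K + \ldots$, which after taking the maximum yields an upper bound on $\overline{\tb}(K)$ in terms of the Kauffman polynomial, and symmetrically for $m(K)$; adding these gives $\overline{\tb}(K) + \overline{\tb}(m(K)) \leq -\arcindex(K)$, using the fact (a theorem of Bae--Park, with the sharpness due to Ng and to Matsuda) that the arc index equals the $a$-span of the Kauffman polynomial for any knot. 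This is the ``Morton--Beltrami'' style inequality that ties the two mirrors together.

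\textbf{The reverse inequality via grid/rectangular diagrams.} For the lower bound, I would use the correspondence between arc presentations (rectangular/grid diagrams) and Legendrian fronts. A grid diagram $G$ of $K$ with arc index (equivalently, grid number) $\arcindex(K)$ can be turned into a Legendrian front for $K$ and, by rotating the same grid by $90^\circ$ (or reflecting appropriately), into a Legendrian front for $m(K)$. The key bookkeeping fact is that if the grid has $n = \arcindex(K)$ vertical segments, then the Thurston--Bennequin numbers of the two resulting Legendrians satisfy $\tb(\leg_K) + \tb(\leg_{m(K)}) = -n$: each corner of the grid contributes a cusp to exactly one of the two fronts, the writhes combine to cancel, and counting the $n$ NW corners plus $n$ SE corners (or the analogous cusp count) against the total writhe produces exactly $-n$. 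Hence $\overline{\tb}(K) + \overline{\tb}(m(K)) \geq \tb(\leg_K) + \tb(\leg_{m(K)}) = -\arcindex(K)$.

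Combining the two directions gives the claimed equality. I would present this compactly, citing Ng's computation of maximal $\tb$ via the Kauffman polynomial \cite{Ng05:MaximalThurstonBennequin} (or the relevant KnotInfo-adjacent references the paper already uses) and the Bae--Park/Ng identification of arc index with Kauffman $a$-span, rather than reproving them.

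\textbf{Expected main obstacle.} The delicate point is the corner-counting bookkeeping in the grid-to-front translation: one must track carefully which corners of the rectangular diagram become cusps (left cusps vs.\ right cusps) in each of the two Legendrian fronts, how the Seifert-framing writhe of the smooth diagram splits between the two, and verify that the stabilizations introduced in the standard grid-to-front algorithm do not change the count. Getting the signs and the exact constant $-\arcindex(K)$ (as opposed to $-\arcindex(K) \pm c$) right is where all the care is needed; everything else is an invocation of established inequalities. I would therefore devote the bulk of the written proof to a clean statement of the grid conventions and a short lemma pinning down $\tb(\leg_K) + \tb(\leg_{m(K)}) = -\arcindex(K)$ for the specific Legendrians arising from a minimal grid, then let the Kauffman bound close the argument from the other side.
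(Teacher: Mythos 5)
Your second half---producing, from a minimal grid diagram, Legendrian fronts for $K$ and $m(K)$ whose crossing contributions cancel and whose cusps total $2\arcindex(K)$---is exactly the counting argument the paper uses, and it correctly yields $\overline{\tb}(K) + \overline{\tb}(m(K)) \geq -\arcindex(K)$. The gap is in your first half. The fact you lean on, that $\arcindex(K)$ equals the $a$-span of the Kauffman polynomial (plus $2$) for \emph{every} knot, is not a theorem: Morton--Beltrami prove only the inequality $\mathrm{spr}_a F_K + 2 \leq \arcindex(K)$, and the equality is known for alternating links (Bae--Park) and some further families, not in general. Adding the two Kauffman bounds gives $\overline{\tb}(K) + \overline{\tb}(m(K)) \leq -\mathrm{spr}_a F_K - 2$, and since $-\mathrm{spr}_a F_K - 2 \geq -\arcindex(K)$, your pincer closes only when equality holds in Morton--Beltrami; it fails for any knot for which the Kauffman bound on $\overline{\tb}$ is not sharp for $K$ or for $m(K)$, and such knots exist (e.g.\ $m(10_{132})$; indeed, arc-index computations for various non-alternating knots use maximal $\tb$ precisely because the Kauffman span bound is too weak there). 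So the inequality $\overline{\tb}(K) + \overline{\tb}(m(K)) \leq -\arcindex(K)$ cannot be extracted from the Kauffman polynomial alone.

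The paper does not run a two-sided pincer at all: it invokes the theorem of Dynnikov and Prasolov \cite{DynPra13:MinGridMaxTB} that the Legendrian fronts associated to a \emph{minimal} grid diagram simultaneously realize $\overline{\tb}(K)$ and $\overline{\tb}(m(K))$; applying your cusp/crossing count to that single diagram then gives the equality outright. That result (equivalently, the hard inequality $\arcindex(K) \leq -\overline{\tb}(K) - \overline{\tb}(m(K))$) is the essential ingredient your sketch is missing, and no amount of care with the grid-to-front corner bookkeeping substitutes for it; if you want to keep your two-step structure, replace the Kauffman-polynomial step with this theorem.
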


Before beginning the proof, we need to set some notation.  If $\gd$ is a grid 
diagram for $K$, then we denote the ($\pi/2$)-rotation by $m(\gd)$, which is a 
grid diagram for $m(K)$.  Further, the Legendrian front associated to $\gd$ is 
denoted $\leg_\gd$.

\begin{proof}[Proof of \fullref{prop:max-tb-grid}]
  Let $\gd$ be a minimal grid diagram for $K$.  By 
  \cite[Corollary~3]{DynPra13:MinGridMaxTB}, the associated front diagram 
  $\leg_\gd$ realizes $\overline{\tb}(K)$.  The same is true for 
  $\leg_{m(\gd)}$ and $\overline{\tb}(m(K))$.
	
  When we compute $\tb(\leg_\gd) + \tb(\leg_{m(\gd)})$, the contributions from 
  the crossings cancel, as each positive (resp.\ negative) crossing in 
  $\leg_\gd$ is a negative (resp.\ positive) crossing in $\leg_{m(\gd)}$.  
  Thus, $\tb(\leg_\gd) + \tb(\leg_{m(\gd)})$ is equal to half the total number 
  of cusps in $\leg_\gd$ and $\leg_{m(\gd)}$.  Each $X$ or $O$ in the grid 
  diagram $\gd$ yields a cusp in exactly one of $\leg_\gd$ or $\leg_{m(\gd)}$, 
  and hence we see that the total number of cusps in $\leg_\gd$ and 
  $\leg_{m(\gd)}$ is equal to twice $\arcindex(K)$.
\end{proof}

\begin{corollary}
	\label{cor:amph-not-torsion}
	No non-trivial amphicheiral Legendrian knot is $2$-torsion in $\LZC$.
\end{corollary}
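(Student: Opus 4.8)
The plan is to deduce Corollary~\ref{cor:amph-not-torsion} directly from Proposition~\ref{prop:max-tb-grid} together with the $\tb$-invariance part of Proposition~\ref{prop:zz-cob-classical}. Suppose $\leg$ is a non-trivial amphicheiral Legendrian knot that is $2$-torsion in $\LZC$, i.e.\ $\leg \connsum \leg \qconc \maxunknot$. Since $\leg$ is amphicheiral, its underlying smooth knot $K$ satisfies $K = m(K)$ (allowing, as needed, for reversal of orientation, which does not affect $\tb$), and hence $\overline{\tb}(K) = \overline{\tb}(m(K))$. Applying Proposition~\ref{prop:max-tb-grid} gives $2\,\overline{\tb}(K) = -\arcindex(K)$.

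Next I would invoke the behavior of the maximal Thurston--Bennequin number under connected sum: $\overline{\tb}(K \connsum K) = \overline{\tb}(K) + \overline{\tb}(K) + 1 = 2\,\overline{\tb}(K) + 1$ (this is the standard additivity-up-to-one formula, due to Etnyre--Honda). Combining with the previous identity, $\overline{\tb}(K \connsum K) = 1 - \arcindex(K)$. Now, Proposition~\ref{prop:zz-cob-classical} tells us that Lagrangian zigzag concordant knots have equal $\tb$; since $\leg \connsum \leg \qconc \maxunknot$ and the maximal unknot has $\tb = -1$, we need a Legendrian representative of $K \connsum K$ with $\tb = -1$. Here one must be slightly careful: the representative of $\leg \connsum \leg$ appearing in the hypothesis need not be the \emph{maximal} one. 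But Proposition~\ref{prop:zz-cob-classical} forces whatever representative is used to have $\tb = -1$, so in particular $\overline{\tb}(K \connsum K) \geq -1$, giving $1 - \arcindex(K) \geq -1$, i.e.\ $\arcindex(K) \leq 2$.

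The final step is to observe that $\arcindex(K) \leq 2$ forces $K$ to be the unknot: the arc index of any knot is at least $2$, with equality if and only if $K = \unknot$ (equivalently, a grid diagram of grid number $2$ represents only the unknot). This contradicts the assumption that $\leg$ is non-trivial, completing the proof. I expect the only subtle point to be the bookkeeping around "maximal $\tb$ of the representative actually used in the zigzag concordance" versus $\overline{\tb}$ — but since $\tb$ of \emph{any} representative of $K\connsum K$ that appears is pinned to $-1$ by Proposition~\ref{prop:zz-cob-classical}, and $-1 \geq \overline{\tb}$ is false once $\overline{\tb}(K\connsum K) > -1$, the inequality $\overline{\tb}(K \connsum K) \leq -1$ is exactly what we extract, and that suffices. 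There is no genuine obstacle; the corollary is essentially an arithmetic consequence of the preceding proposition.

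\begin{proof}[Proof of \fullref{cor:amph-not-torsion}]
  Suppose, for contradiction, that $\leg$ is a non-trivial amphicheiral
  Legendrian knot with $\leg \connsum \leg \qconc \maxunknot$, and let $K$
  denote the underlying smooth knot.  Since $\leg$ is amphicheiral, $K$ is
  smoothly isotopic to $m(K)$ (possibly after reversing orientation, which
  does not affect the Thurston--Bennequin number), so
  $\overline{\tb}(K) = \overline{\tb}(m(K))$.  By
  \fullref{prop:max-tb-grid}, this yields
  $2\,\overline{\tb}(K) = -\arcindex(K)$.  Using the additivity of the
  maximal Thurston--Bennequin number under connected sum
  \cite{EtnHon03:LegConnSum},
  \[
    \overline{\tb}(K \connsum K) = 2\,\overline{\tb}(K) + 1 = 1 -
    \arcindex(K).
  \]
  On the other hand, \fullref{prop:zz-cob-classical} implies that every
  Legendrian representative of $K \connsum K$ appearing in the zigzag
  concordance $\leg \connsum \leg \qconc \maxunknot$ has Thurston--Bennequin
  number equal to $\tb(\maxunknot) = -1$; in particular
  $\overline{\tb}(K \connsum K) \geq -1$, whence $\arcindex(K) \leq 2$.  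But
  $\arcindex(K) \geq 2$ for every knot, with equality only for the unknot, so
  $K$ is the unknot, contradicting the non-triviality of $\leg$.
\end{proof}
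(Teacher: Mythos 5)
Your proof is correct and follows the same basic strategy as the paper's: combine \fullref{prop:max-tb-grid} with the $\tb$-rigidity of zigzag concordance (\fullref{prop:zz-cob-classical}) to force $\arcindex(K) \leq 2$, hence triviality. The one place you diverge is the connected-sum step: you invoke the Etnyre--Honda additivity $\overline{\tb}(K \connsum K) = 2\,\overline{\tb}(K) + 1$ \cite{EtnHon03:LegConnSum}, and your inequality chain genuinely needs the nontrivial direction of that theorem (the upper bound on $\overline{\tb}(K \connsum K)$), whereas the paper avoids maximal invariants at this step entirely: it uses only the elementary formula $\tb(\leg \connsum \leg) = 2\tb(\leg) + 1$ for the specific Legendrian representative, so that $2$-torsion and \fullref{prop:zz-cob-classical} give $\tb(\leg) = -1$ directly, and then $\arcindex(K) \leq -2\tb(\leg) = 2$ follows from \fullref{prop:max-tb-grid} together with $\tb(\leg) \leq \overline{\tb}(K)$. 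Both arguments are valid; the paper's is slightly more economical since it never appeals to the hard direction of $\overline{\tb}$-additivity, while yours correctly handles the bookkeeping about which representative of $K \connsum K$ has its $\tb$ pinned to $-1$.
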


\begin{proof}
  Suppose that $\leg$ is amphicheiral and $2$-torsion.  Since $\leg$ is 
  amphicheiral, \fullref{prop:max-tb-grid} shows that $\arcindex(K) \leq 
  -2\tb(\leg)$.  On the other hand, since $\leg$ is $2$-torsion, we have $-1 = 
  \tb(\leg \connsum \leg) = 2\tb(\leg) +1$, and hence that $2 \tb(\leg) = -2$.  
  Thus, we see that $\arcindex(\leg) \leq 2$, and hence that $\leg$ is an unknot.
\end{proof}

%\begin{question}
%	Does there exist a non-trivial Legendrian knot whose equivalence class in $\mathcal{LC}_0$ is invertible?
%\end{question}
%
%To prove that no such knot exists, it would suffice to show that for any Legendrian $\leg$, and any Legendrian realization $\leg'$ of a smooth knot concordant to $m\leg$, we have \[\tb(\leg) + \tb(\leg') < -2.\]
%Proposition~\ref{prop:max-tb-grid} shows that the inequality holds when $\leg'$ realizes $m\leg$, as the grid number of a nontrivial knot is at least $5$.  Thus, the question is asking us to investigate how $\overline{tb}$ behaves in a smooth concordance class.  We do get a common upper bound from any concordance invariant that bounds $\tb$ ($g_4$, $\tau$, $s$, \dots).

% ********************
\subsection{Metric structure}
\label{ssec:satellite}

In this subsection, we state some properties of $\LZC$ as a metric space (or 
equivalently, a weighted graph). The main theorems here are inspired by their 
smooth analogues in \cite{CocHar18:ConcGeom}, and we point the reader to 
\cite{CocHar18:ConcGeom} for relevant definitions, e.g.\ of quasi-isometry and 
related terms.  We begin by establishing the non-hyperbolicity of $\LZC$.

\begin{theorem}[{cf.\ \cite[Theorem~4.1]{CocHar18:ConcGeom}}]
  Given $n \geq 1$, there exists a subspace of $\LZC$ that is quasi-isometric 
  to $\R^n$. Consequently, $\LZC$ cannot be isometrically embedded in a finite 
  product of $\delta$-hyperbolic spaces.
\end{theorem}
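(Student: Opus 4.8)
The plan is to adapt Cochran and Harvey's construction \cite[Theorem~4.1]{CocHar18:ConcGeom} of quasi-isometrically embedded Euclidean spaces, using two features of $\LZC$: that $\LagGenus$ dominates the smooth relative $4$-genus (\fullref{rem:zz-genus-neq-smooth-genus}), and that connect-summing a fixed Legendrian onto everything changes $\LagGenus$ by only a bounded amount. Fix $n$ and choose smooth knots $K_1, \dots, K_n$ so that $\mathbf a \mapsto \sum_i a_i [K_i]$ is a quasi-isometric embedding of $(\Z^n, \norm{\cdot}_1)$ into the smooth concordance group $\ConcGrp$ with its $g_4$-metric; this is precisely what Cochran and Harvey produce, using connected sums of torus knots together with a suitable finite family of Tristram--Levine signatures. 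For each $i$, fix rotation-number-$0$ Legendrian representatives $\leg_i$ of $K_i$ and $\leg_i'$ of the mirror reverse of $K_i$, so that the underlying knot of $\leg_i'$ represents $-[K_i]$; then define $\phi \colon \Z^n \to \LZC$ by
\[
  \phi(\mathbf a) = \maxunknot \connsum \bigconnsum_{i \colon a_i > 0} a_i \leg_i \connsum \bigconnsum_{i \colon a_i < 0} |a_i| \leg_i' ,
\]
where $k \leg$ denotes the connected sum of $k$ copies of $\leg$. Since the rotation number is additive under connected sum, every $\phi(\mathbf a)$ has rotation number $0$, so all the distances below are finite by \fullref{prop:zz-connectivity}.

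I would then check that $\phi$ is a quasi-isometric embedding. The lower bound is immediate: \fullref{rem:zz-genus-neq-smooth-genus} gives $\LagGenus(\phi(\mathbf a), \phi(\mathbf b)) \geq g_4(\phi(\mathbf a), \phi(\mathbf b))$, and the right-hand side is the smooth relative $4$-genus between the underlying knots, which equals $g_4\bigl( \sum_i (a_i - b_i)[K_i] \bigr)$ and is therefore, by the choice of the $K_i$, at least a fixed positive multiple of $\norm{\mathbf a - \mathbf b}_1$ minus a constant. For the upper bound, I would pass from $\mathbf a$ to $\mathbf b$ one coordinate at a time and apply the triangle inequality for $\LagGenus$; changing coordinate $i$ amounts to connect-summing on, or deleting, some copies of $\leg_i$ or $\leg_i'$. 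The estimate that drives this is that, for any Legendrian knots $\leg$ and $\leg''$, one has $\LagGenus(\leg \connsum \leg'', \leg) \leq \LagGenus(\maxunknot, \leg'')$: connect-summing each constituent cobordism of a zigzag cobordism realizing $\LagGenus(\maxunknot, \leg'')$ with the trivial cylinder over $\leg$ yields, via the connected-sum operation on Lagrangian cobordisms \cite{GuaSabYac22:LegSatellitesCob}, a zigzag cobordism of no greater genus between $\leg \connsum \maxunknot$ (which is Legendrian isotopic to $\leg$) and $\leg \connsum \leg''$. Iterating, changing coordinate $i$ from $a_i$ to $b_i$ costs at most $|a_i - b_i| \cdot \max\{ \LagGenus(\maxunknot, \leg_i), \LagGenus(\maxunknot, \leg_i') \}$ (routing through the value $0$ when the signs of $a_i$ and $b_i$ differ), so $\LagGenus(\phi(\mathbf a), \phi(\mathbf b)) \leq C \norm{\mathbf a - \mathbf b}_1$ for a constant $C$ independent of $\mathbf a$ and $\mathbf b$. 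Since $(\Z^n, \norm{\cdot}_1)$ is quasi-isometric to $\R^n$, the image $\phi(\Z^n) \subseteq \LZC$ is a subspace quasi-isometric to $\R^n$.

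For the consequence, suppose $\LZC$ embeds isometrically into a product $X_1 \times \cdots \times X_k$ of $\delta$-hyperbolic spaces. Composing such an embedding with $\phi$ for $n = k+1$ would give a quasi-isometric embedding of $\R^{k+1}$ into the product; but a product of $k$ Gromov-hyperbolic spaces admits no quasi-isometrically embedded copy of $\R^{k+1}$, because a $\delta$-hyperbolic space admits no quasi-isometrically embedded $\R^2$ and a pigeonhole argument over the $k$ factors reduces the general case to this one (cf.\ \cite[Theorem~4.1]{CocHar18:ConcGeom}). This contradiction proves the claim.

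The point I expect to need the most care is the connected-sum estimate---verifying that the connected-sum operation on Lagrangian zigzag cobordisms of \cite{GuaSabYac22:LegSatellitesCob} does not raise the genus when one of the factors is a trivial cylinder, which is a matter of tracking Euler characteristics through that construction. The remaining ingredients are either routine bookkeeping or direct appeals to \cite{CocHar18:ConcGeom}.
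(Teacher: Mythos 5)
Your proposal is correct and follows the same blueprint as the paper's proof: both import the Cochran--Harvey knots $K_1, \dotsc, K_n$ and their Tristram signature functions, obtain the lower bound from $\LagGenus \geq g_4$ combined with those signatures, and obtain the upper bound from subadditivity of the $\LagGenus$ metric, exactly as in \cite[Theorem~4.1]{CocHar18:ConcGeom}. The one substantive difference is the domain of the embedding: the paper restricts the lattice points to $\Z_{\geq 0}^n$ and uses only Legendrian representatives $\leg_i$ of the $K_i$, so its image is a copy of the non-negative orthant, whereas you cover all of $\Z^n$ by also fixing rotation-number-$0$ Legendrian representatives $\leg_i'$ of the mirror reverses and routing negative coordinates through them; since $\LZC$ is only a monoid this costs nothing (your map need not be a homomorphism, and the lower bound still sees the class $\sum_i (a_i - b_i)[K_i]$ in $\ConcGrp$), and it matches the literal conclusion ``quasi-isometric to $\R^n$'' more directly than the orthant does. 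Your explicit estimate $\LagGenus(\leg \connsum \leg'', \leg) \leq \LagGenus(\maxunknot, \leg'')$, obtained by connect-summing each constituent cobordism of a minimal zigzag with the trivial cylinder over $\leg$ via \cite{GuaSabYac22:LegSatellitesCob}, is precisely the content the paper compresses into ``the right inequality follows from the subadditivity and symmetry of metrics,'' and the bookkeeping you flag is as routine as you expect (connected sum with a genus-$0$ cylinder does not raise genus, and the cospan tops match up to Legendrian isotopy because connected sum of Legendrians is well defined). Finally, the paper, like you, simply delegates the non-embeddability consequence to \cite[Theorem~4.1]{CocHar18:ConcGeom}, so your brief sketch of that step is not held to a higher standard than the original.
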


\begin{proof}
  In the proof of \cite[Theorem~4.1]{CocHar18:ConcGeom}, knots $K_1, \dotsc, 
  K_n$ that are linearly independent in the smooth concordance group 
  $\mathcal{C}$ are chosen, together with Tristram signature functions 
  $\sigma_j \colon \mathcal{C} \to \Z$, $1 \leq j \leq n$, that satisfy
  \begin{equation}
    \label{eqn:tristram}
    g_4 (K) \geq \frac{1}{2} \abs{\sigma_j (K)} \text{ for all } K, \qquad
    \sigma_j (K_i) = 2 \delta_{ij}.
  \end{equation}
  The goal there is then to prove, for $\vec{x} = (x_1, \dotsc, x_n), \vec{y} = 
  (y_1, \dotsc, y_n) \in \Z^n$, the inequalities
  \begin{equation}
    \label{eqn:non-hyp-ineq}
    \frac{1}{n} \taxicab (\vec{x}, \vec{y}) \leq g_4 (\vec{x}, \vec{y}) \leq n \taxicab 
    (\vec{x}, \vec{y}),
  \end{equation}
  where $\taxicab$ is the taxicab metric on $\Z^n$, and $g_4 (\vec{x}, \vec{y})$ is 
  the slice genus distance between $x_1 K_1 \connsum \dotsb \connsum x_n K_n$ 
  and $y_1 K_n \connsum \dotsb \connsum y_n K_n$. To complete that proof, the 
  left inequality of \eqref{eqn:non-hyp-ineq} follows from 
  \eqref{eqn:tristram}, while the right inequality follows from the 
  subadditivity and symmetry of metrics.

  In the present context, we follow the same proof, limiting $\vec{x}$ and 
  $\vec{y}$ to $\Z_{\geq0}^n$ and choosing Legendrian representatives $\leg_i$ 
  of $K_i$.  Then the Legendrians $\leg_i$ are guaranteed to be linearly 
  independent in $\LZC$, and the goal is to prove that
  \begin{equation}
    \label{eqn:non-hyp-ineq-lag}
    \frac{1}{n} \taxicab (\vec{x}, \vec{y}) \leq \LagGenus (\vec{x}, \vec{y}) 
    \leq n \taxicab (\vec{x}, \vec{y}),
  \end{equation}
  where $\LagGenus (\vec{x}, \vec{y})$ is the relative Lagrangian genus between 
  $x_1 \leg_1 \connsum \dotsb \connsum x_n \leg_n$ and $y_1 \leg_1 \connsum 
  \dotsb \connsum y_n \leg_n$.  The left inequality of 
  \eqref{eqn:non-hyp-ineq-lag} follows from \eqref{eqn:tristram} as in the 
  proof of \cite[Theorem~4.1]{CocHar18:ConcGeom}, with the additional 
  observation that $\LagGenus (\leg) \geq g_4 (\Lambda)$.
\end{proof}

Next, we prove \fullref{thm:winding-1-qis}, which states that 
winding-number-$\pm 1$ satellite operators with rotation number $0$ relative to 
the identity or reverse operator are self-quasi-isometries of $\LZC$.
To clarify this statement, every satellite operator corresponds to a pattern 
Legendrian link in $J^1 S^1$; for example, the identity operator corresponds to 
the core of $J^1 S^1$.  A pair of homologous links $\Pi_1, \Pi_2 \in J^1 S^1$ 
cobound a Seifert surface $S$, meaning that $\bdy S = \Pi_1 \disjunion -\Pi_2$, 
giving a relative rotation number $\rot (\Pi_1 \disjunion -\Pi_2)$ when they 
are both Legendrian.  \emph{A priori,} this relative rotation number depends on 
the homology class of the Seifert surface; but since $H_2 (J^1 S^1) = 0$, it is 
in fact well defined.
%
%\mw{To begin, we first clarify the meaning of relative rotation number, as 
%follows. Let $(M, \xi)$ be a contact $3$-manifold, and let $\leg_1$ and 
%$\leg_2$ be two oriented Legendrian links in $M$ that are homologous (taking 
%into account their orientations).  This means that there exists a Seifert 
%surface $S$ such that $\bdy S = \leg_2 - \leg_1$.
%  %Consider now the commutative diagram
%  %\begin{equation*}
%  %  %\begin{tikzcd}[column sep = small, row sep = small]
%  %    \dotso %\to H^1 (S)
%  %    \to H^1 (\bdy S) \to H^2 (S, \bdy S) \to H^2 (S) = 0 \to \dotso%\\
%  %    %\dotso \to H_1 (S, \bdy S) \to H_0 (\bdy S) \to H_0 (S)
%  %  %\end{tikzcd}
%  %\end{equation*}
%  Let $v$ be the rank-$1$ vector field on $\bdy S$ defined by the tangent 
%  vectors of $\leg_1$ and $\leg_2$ (with these orientations).  Since $\bdy S$ 
%  is Legendrian, $v$ lies on $\xi$, and so we get a relative Euler class $e 
%  (\xi, v) \in H^2 (S, \bdy S)$.
%  %, which is in the image of $[v] \in H^1 (\bdy S)$. %false...
%  The \dfn{relative rotation number} $\rot (\leg_1, \leg_2)$ is defined
%}

As in \cite{CocHar18:ConcGeom}, we use the following lemma to transform the 
problem into proving that such operators are a bounded distance from known 
quasi-isometries.

%\begin{definition}
%  \mw{Satellite operators (pattern knots, winding number), the connected-sum 
%    operator, the reverse operator, the $(n,1)$-cable operator, the zero 
%    operator}\footnote{\js{Note that we've already used satellite operations 
%      above!  Move this definition there?}} \mw{I need someone who is more 
%    versed in Legendrian satellites to define these, or at least the general 
%    satellite operators first.}
%\end{definition}

%\begin{definition}
%  \label{defn:quasi-isom}
%  Let $(X, d)$ and $(X', d')$ be two metric spaces. We say that a function $f 
%  \colon X \to X'$ is a \emph{quasi-isometric embedding of $X$ into $X'$} if 
%  there exist constants $A \geq 1$ and $B \geq 0$, such that
%  \[
%    \frac{1}{A} \, d (x, y) - B \leq d' (f (x), f (y)) \leq A \, d (x, y) + B.
%  \]
%  We say that $f$ is \emph{quasi-surjective} if there exists a constant $C \geq 
%  0$ such that for every $x' \in X$, there exists some $x \in X$ with
%  \[
%    d' (x', f (x)) \leq C.
%  \]
%  We say that $f$ is a \emph{quasi-isometry} if it is both a quasi-isometric 
%  embedding and it is quasi-surjective.
%\end{definition}

\begin{lemma}[{\cite[Lemma~6.2]{CocHar18:ConcGeom}}]
  \label{lem:quasi-isom}
  Let $(X, d)$ and $(X', d')$ be two metric spaces. Suppose that $f \colon X 
  \to X'$ is a quasi-isometric embedding.  If $g \colon X \to X'$ is within a 
  bounded distance from $f$, then $g$ is a quasi-isometric embedding.  Further, 
  if $f$ is a quasi-isometry, then so is $g$.
\end{lemma}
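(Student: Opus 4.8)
The plan is to unwind the definitions of quasi-isometric embedding and quasi-isometry and push everything through the triangle inequality. Fix $M \geq 0$ with $d'(f(x), g(x)) \leq M$ for all $x \in X$, and let $C \geq 1$, $D \geq 0$ be constants witnessing that $f$ is a quasi-isometric embedding, so that
\[
  \frac{1}{C} d(x,y) - D \;\leq\; d'(f(x), f(y)) \;\leq\; C\, d(x,y) + D
\]
for all $x, y \in X$.

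First I would bound $d'(g(x),g(y))$ from above by inserting $f(x)$ and $f(y)$: the triangle inequality gives
\[
  d'(g(x), g(y)) \;\leq\; d'(g(x), f(x)) + d'(f(x), f(y)) + d'(f(y), g(y)) \;\leq\; C\, d(x,y) + D + 2M,
\]
and symmetrically
\[
  d'(g(x), g(y)) \;\geq\; d'(f(x), f(y)) - d'(f(x), g(x)) - d'(f(y), g(y)) \;\geq\; \frac{1}{C} d(x,y) - D - 2M.
\]
Thus $g$ is a quasi-isometric embedding with the same multiplicative constant $C$ and additive constant $D + 2M$. For the last clause, if $f$ is moreover a quasi-isometry, I would choose $E \geq 0$ so that every point of $X'$ lies within distance $E$ of $f(X)$; then for $x' \in X'$ and $x \in X$ with $d'(x', f(x)) \leq E$ we obtain $d'(x', g(x)) \leq E + M$, so the image of $g$ is $(E+M)$-dense in $X'$. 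Combined with the embedding estimate above, this shows $g$ is a quasi-isometry.

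I do not expect any real obstacle: the entire argument consists of two applications of the triangle inequality plus bookkeeping of additive constants. The only point worth flagging is that the multiplicative constant $C$ is left unchanged while the additive constant worsens only by $2M$, which is precisely what the definition of quasi-isometric embedding tolerates; this is a standard fact (see \cite[Lemma~6.2]{CocHar18:ConcGeom}).
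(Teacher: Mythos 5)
Your proof is correct and complete: the two triangle-inequality estimates give the quasi-isometric embedding constants $(C, D+2M)$, and the coarse density of $g(X)$ follows as you say. The paper itself supplies no argument for this lemma—it simply cites \cite[Lemma~6.2]{CocHar18:ConcGeom}—and your write-up is precisely the standard proof of that cited result, so there is nothing to reconcile.
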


%\begin{lemma}[{\cite[Lemma~6.2]{cochran-harvey}}]
%  Let $(X, d)$ and $(X', d')$ be two metric spaces. Suppose that $f \colon X 
%  \to X'$ is a quasi-isometric embedding with respect to constants $A$ and $B$ 
%  as in \fullref{defn:quasi-isom}.  If $g \colon X \to X'$ is within a bounded 
%  distance $D$ from $f$, i.e.\
%  \[
%    d' (f (x), g (x)) \leq D
%  \]
%  for all $x \in X$, then $g$ is a quasi-isometric embedding with respect to 
%  constants $A$ and $B + 2 D$. \js{Further, if $f$ is a quasi-isometry, then so 
%    is $g$.} \mw{I didn't write this because I thought we were not going to use 
%    it.}
%\end{lemma}

%To prove \fullref{thm:winding-1-qis},
To prove the bounded distance,
we first extend \cite[Proposition~3.1]{SabVelWon21:MaxLeg} to links in $J^1 
S^1$ that are not necessarily null-homologous.

\begin{lemma}[{\cite[Proposition~3.1]{SabVelWon21:MaxLeg}}]
  \label{lem:lqc-satellite-general}
  Suppose that $\Lambda_1$ and $\Lambda_2$ are Legendrian links in $(M,\alpha)$ 
  with $[\Lambda_1] = [\Lambda_2] \in H_1(M)$ and relative rotation number 
  $r_{[\Sigma]}(\Lambda_1 \disjunion -\Lambda_2) = 0$ with respect to some 
  Seifert surface $\Sigma$ for $\Lambda_1 \disjunion -\Lambda_2$. Then there 
  exists a Lagrangian zigzag cobordism between $\Lambda_1$ and $\Lambda_2$.
\end{lemma}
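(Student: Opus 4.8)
The plan is to imitate the strategy of \cite[Proposition~3.1]{SabVelWon21:MaxLeg}, which establishes the result when $\Lambda_1$ and $\Lambda_2$ are both null-homologous, and to isolate exactly where the null-homology hypothesis is used. That proof produces a common lower bound for $\Lambda_1$ and $\Lambda_2$ via convex surface theory: one realizes a Seifert surface for each $\Lambda_i$ as a convex surface whose dividing set (after possible stabilizations of $\Lambda_i$, which change it only by an elementary Lagrangian cobordism) is a single boundary-parallel curve, and then the characteristic foliation provides an ambient isotopy pushing the two stabilized Legendrians to a common Legendrian $\Lambda_-$ sitting below both. The rotation-number constraint is what lets the two stabilization patterns be matched up, so that the resulting cobordisms have the required Euler characteristics and the whole object is a genuine zigzag. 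First I would recast that argument using the relative Seifert surface $\Sigma$ with $\bdy \Sigma = \Lambda_1 \disjunion -\Lambda_2$ rather than two separate Seifert disks bounding copies of the unknot.

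The key steps, in order: (1) Fix the relative Seifert surface $\Sigma$ and thicken it to a convex surface in a neighborhood, arranging (by a Legendrian isotopy of $\Lambda_1 \disjunion \Lambda_2$, supported away from the other component, followed by stabilizations as needed) that the dividing set on $\Sigma$ consists of boundary-parallel arcs together with possibly some closed curves, and then use the Giroux flexibility/elimination lemma to remove the closed components, leaving a dividing set that is a collection of $\bdy$-parallel arcs pairing up the two boundary components. (2) Since $[\Lambda_1] = [\Lambda_2]$ in $H_1(M)$, the surface $\Sigma$ is genuinely two-sided and the arc-pairing respects the identification of the two ends; convex surface theory then gives, after further stabilizations on each end to make the twisting profiles agree, an ambient contactomorphism carrying a neighborhood of $\Sigma$ to a standard model in which $\Lambda_1$ and $\Lambda_2$ are Legendrian-isotopic to the top and bottom boundaries of a standard "suspension," yielding a Lagrangian cobordism $L_i$ from a common Legendrian $\Lambda_-$ up to (the stabilization of) $\Lambda_i$. (3) Verify that the relative rotation number $r_{[\Sigma]}(\Lambda_1 \disjunion -\Lambda_2) = 0$ forces the two sequences of stabilizations to be compatible — that is, the number of positive and negative stabilizations needed on the $\Lambda_1$-side matches those on the $\Lambda_2$-side after reversing orientation — so that the elementary cobordisms assemble into a legitimate zigzag cobordism (recall from \fullref{prop:cob-classical} that rotation number is the only classical obstruction, and here it vanishes by hypothesis). (4) Finally, assemble the cospan: $\Lambda_1 \sim \Lambda_- \sim \Lambda_2$, where each $\sim$ is realized by the composite of the stabilization cobordisms and $L_i$.

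The main obstacle is Step~(2): in the null-homologous setting of \cite{SabVelWon21:MaxLeg} one can compare everything to the unique tight contact structure on a neighborhood of a disk, but for a Seifert surface in a general homology class the germ of the contact structure near $\Sigma$ need not be standard, and different choices of $\Sigma$ within the homology class could give different dividing sets. The hypothesis $H_2(M) = 0$ is not available — only $H_2(J^1S^1) = 0$, which is why the statement is phrased "with respect to some Seifert surface $\Sigma$" — so I expect the argument to proceed by fixing one such $\Sigma$ and showing that after enough stabilizations the convex neighborhood becomes standard regardless; the bookkeeping of how stabilizations interact with the relative rotation number computed against that fixed $\Sigma$ is where the care is needed, and it is precisely the $r_{[\Sigma]} = 0$ hypothesis that makes it go through. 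The remaining steps are routine adaptations of \cite[Proposition~3.1]{SabVelWon21:MaxLeg}.
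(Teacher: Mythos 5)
Your high-level framing — work with the single relative Seifert surface $\Sigma$ for $\Lambda_1 \disjunion -\Lambda_2$, stabilize to get negative twisting so $\Sigma$ can be made convex, and invoke $r_{[\Sigma]}=0$ at the end to match up stabilizations — is indeed the paper's strategy, but your Steps (1)–(2) replace the actual engine of the argument with claims that do not hold. You cannot, in general, remove closed components of the dividing set of a convex surface using Giroux flexibility or the elimination lemma: those tools only adjust the characteristic foliation within the isotopy class of a \emph{fixed} dividing set, and closed dividing curves record the germ of the contact structure near $\Sigma$ (they can be essential, e.g.\ in the presence of twisting/torsion in the ambient manifold, which is exactly the situation one must allow for a general $(M,\alpha)$ and a non-null-homologous class). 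The paper never needs any such control: beyond convexity with negative boundary twisting, the dividing set is not normalized at all. Likewise, your Step (2) — an ambient contactomorphism carrying a neighborhood of $\Sigma$ to a "standard suspension" from which Lagrangian cobordisms to a common lower bound $\Lambda_-$ fall out — is precisely the missing construction rather than a consequence of convex surface theory: there is no standard model for a neighborhood of a genus-$g$ convex surface with prescribed Legendrian boundary, and nothing in your outline says where the Lagrangian cobordisms (or the genus they must carry) come from.

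What the paper actually does, following the proof of Lemma~3.5 of the cited reference, is: first reduce to the case of knots by Legendrian surgery along arcs joining components; then put the convex $\Sigma$ in disk--band form with an arc basis $a_1,\dotsc,a_g,a_{g+1}$ chosen so that only $a_{g+1}$ meets $\Lambda_2$, and perform Legendrian \emph{ambient surgery} along $a_1,\dotsc,a_g$ (Dimitroglou Rizell, Ekholm--Honda--K\'alm\'an). The traces of these surgeries are the Lagrangian cobordisms, and they cut $\Sigma$ down to a convex annulus $A$ cobounded by the surgered knot $\Lambda$ and $\Lambda_2$; this annulus gives the smooth isotopy, $r_{[A]}(\Lambda \disjunion -\Lambda_2)=0$ holds by construction, and Fuchs--Tabachnikov then gives Legendrian isotopy after sufficiently many double stabilizations, which fit into the zigzag since a double stabilization is Lagrangian cobordant to the original knot. (That the paper's zigzag goes upward from $\Lambda_1$ rather than down to a common $\Lambda_-$ is immaterial; either shape is a legitimate zigzag.) So the genuine gap in your proposal is the absence of the ambient-surgery decomposition of $\Sigma$ — the step that actually produces the cobordisms and absorbs the genus of $\Sigma$ — together with the unjustified normalization of the dividing set, which the correct argument neither needs nor could achieve.
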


\begin{proof}
  The proof of \fullref{lem:lqc-satellite-general} follows an argument that 
  closely mirrors the proof of \cite[Lemma~3.5]{SabVelWon21:MaxLeg}.  In what 
  follows, we discuss the basic strategy but point the reader to 
  \cite{SabVelWon21:MaxLeg} for a more detailed
	treatment.
	
	We can assume without loss of generality that both $\Lambda_1$ and $\Lambda_2$ 
	are knots. If that were not the case, then one can perform Legendrian surgery along a 
	collection of Legendrian arcs in their complement to separately join each of their 
	components.
	
  We now consider a Seifert surface $\Sigma$ for the link $\Lambda_1 \disjunion 
  -\Lambda_2$. After possibly double-stabilizing $\Lambda_1$ and $\Lambda_2$ to 
  achieve negative twisting of $\xi$ along each component of $\partial \Sigma$, 
  we can isotope $\Sigma$ relative to the boundary to be convex.
	
	The idea now is to view $\Sigma$ in disk--band form by choosing an arc-basis 
	$\set{a_1, \dotsc, a_g, a_{g+1}}$ for $\Sigma$ consisting of a collection of properly 
  embedded arcs in $\Sigma$, such that $\Lambda_2$ intersects only $a_{g+1}$ 
  and does so in a single point. The proof of 
  \cite[Lemma~3.5]{SabVelWon21:MaxLeg} details how, via Legendrian ambient 
  surgery (and therefore Lagrangian cobordism), one can decompose $\Sigma$ by 
  cutting along the arcs $a_1, \dotsc, a_g$. 
	
	At the end of the decomposition process, we obtain a Legendrian knot $\Lambda$ which 
	is Lagrangian cobordant to $\Lambda_1$. Moreover, $\Lambda$ and $\Lambda_2$ cobound a 
	convex annulus $A$ (the portion of $\Sigma$ bounded by $\Lambda$ and $\Lambda_2$). 
	This implies that $\Lambda$ and $\Lambda_2$ are smoothly isotopic. By 
	construction, $r_{[A]}(\Lambda \disjunion -\Lambda_2) = 0$, implying that after 
	sufficiently many double stabilizations of each, $\Lambda$ and $\Lambda_2$ become 
	Legendrian isotopic. In turn, $\Lambda_2$ is Lagrangian zigzag cobordant to $\Lambda$,
	which is itself Lagrangian zigzag cobordant to $\Lambda_1$, finishing the argument.
\end{proof}

We now state an immediate corollary that will be useful in the arguments that follow. Let
$C_{n,1}$ be the Legendrian in $J^1 S^1$ depicted in \fullref{fig:cn1}. Denote by 
$C^m_{n,1}$ the Legendrian knot obtained by stabilizing $C_{n,1}$ either 
positively or negatively $|m|$ times, depending on the sign of $m$.
%
%%%%%%%%%%%%%%%%%%%%%%%%%%%%%%%%%%%%%%%%%%%%%%%%%%%%%%%
\begin{figure}[!htb]
 \includegraphics[scale=1.0]{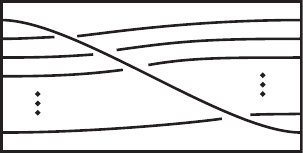}
  \caption{The Legendrian knot $C_{n,1} \in J^1 S^1$.}
  \label{fig:cn1}
\end{figure}
%%%%%%%%%%%%%%%%%%%%%%%%%%%%%%%%%%%%%%%%%%%%%%%%%%%%%%%	

\begin{corollary}
  \label{cor:lqc-satellite-j1s1}
  A Legendrian link $\Pi$ in $J^1 S^1$ is Lagrangian zigzag cobordant to 
  $C^m_{n,1}$ if and only if $n$ is the winding number of $\Pi$ and $m$ is the 
  unique number for which $\rot (\Pi \disjunion -C_{n,1}^m) = 0$.
\end{corollary}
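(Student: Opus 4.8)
The plan is to deduce the corollary directly from \fullref{lem:lqc-satellite-general} by checking that the relative rotation number pins down exactly which stabilization $C_{n,1}^m$ can be reached. First I would prove the forward direction: suppose $\Pi \qcob C_{n,1}^m$. Since Lagrangian zigzag cobordism preserves the homology class of the link in $H_1(J^1 S^1) \isom \Z$ (each constituent Lagrangian cobordism is an orientable surface whose boundary realizes the relevant homology, so its two ends are homologous), we conclude $[\Pi] = [C_{n,1}^m] = n$ in $H_1(J^1 S^1)$, i.e.\ $n$ is the winding number of $\Pi$. For the constraint on $m$: by \fullref{prop:cob-classical}, every Lagrangian cobordism preserves the rotation number of each component, so all Legendrian knots appearing in the zigzag have the same rotation number; moreover, since $H_2(J^1 S^1) = 0$, the relative rotation number $\rot(\Pi \disjunion -C_{n,1}^m)$ is well defined and equals $\rot(\Pi) - \rot(C_{n,1}^m)$, which must vanish because $\Pi$ and $C_{n,1}^m$ are linked by a zigzag of cobordisms each of which fixes rotation numbers. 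Uniqueness of such an $m$ follows from the fact that stabilizing $C_{n,1}$ once changes its rotation number by $\pm 1$, so the map $m \mapsto \rot(C_{n,1}^m)$ is injective on $\Z$ (positive stabilizations decrease, negative ones increase, and they cannot cancel since we stabilize with a fixed sign determined by the sign of $m$).

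For the reverse direction, suppose $n$ is the winding number of $\Pi$ and $m$ is the unique integer with $\rot(\Pi \disjunion -C_{n,1}^m) = 0$. Then $[\Pi] = n = [C_{n,1}^m] \in H_1(J^1 S^1)$, so $\Pi$ and $C_{n,1}^m$ are homologous, and they admit a Seifert surface $\Sigma$ for $\Pi \disjunion -C_{n,1}^m$ (they cobound one since they are homologous in $J^1S^1$). By hypothesis the relative rotation number with respect to $\Sigma$ is $0$ — and by the $H_2 = 0$ remark this is independent of the choice of Seifert surface. Thus the hypotheses of \fullref{lem:lqc-satellite-general} are satisfied with $M = J^1 S^1$, $\alpha$ the standard contact form, $\Lambda_1 = \Pi$, $\Lambda_2 = C_{n,1}^m$, and we conclude directly that there is a Lagrangian zigzag cobordism between $\Pi$ and $C_{n,1}^m$.

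The only mild subtlety — and the step I would be most careful about — is verifying that the relative rotation number behaves additively, i.e.\ that $\rot(\Pi \disjunion -C_{n,1}^m) = \rot(\Pi) - \rot(C_{n,1}^m)$ once a (homologically irrelevant) Seifert surface is chosen, so that the uniqueness of $m$ really reduces to the injectivity of $m \mapsto \rot(C_{n,1}^m)$. This is essentially the same well-definedness observation already made in the paragraph preceding \fullref{lem:lqc-satellite-general} (relative rotation numbers in $J^1 S^1$ are independent of the Seifert surface because $H_2(J^1 S^1) = 0$), applied in conjunction with the invariance of rotation numbers under Lagrangian cobordism from \fullref{prop:cob-classical}; I do not expect it to present a genuine obstacle, only a need for precise bookkeeping. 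Everything else is a formal consequence of \fullref{lem:lqc-satellite-general} and the homological and rotation-number constraints on Lagrangian cobordisms.
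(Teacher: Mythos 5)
Your proposal is correct and takes essentially the same route as the paper: the substantive direction is exactly the paper's one-line application of \fullref{lem:lqc-satellite-general} (after noting that winding number equals the class in $H_1(J^1S^1)$ and that $m$ is unique), while the converse direction, which the paper treats as immediate, you spell out via preservation of homology class and rotation data under Lagrangian cobordism. The only point to state with slightly more care is your citation of \fullref{prop:cob-classical}, which is formulated for null-homologous knots; for $\Pi$ with nonzero winding number one should phrase the forward direction in terms of the \emph{relative} rotation number (well defined since $H_2(J^1S^1)=0$) being preserved by each constituent cobordism, which is the standard fact you are implicitly using.
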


\begin{proof}
  Obviously,
  $[\Pi] = [C_{n,1}] \in H_1(J^1 S^1)$. It is also clear that the number $m$ as 
  described as unique. By \fullref{lem:lqc-satellite-general}, the condition 
  $\rot (\Pi \disjunion -C_{n,1}^m)$ implies that $\Pi$ and $C^m_{n,1}$ are 
  Lagrangian zigzag cobordant.
\end{proof}

\begin{proposition}[cf.~{\cite[Proposition~6.3]{CocHar18:ConcGeom}}]
  \label{prop:satellite-bounded-dist}
  Suppose that $\Pi \colon (\LZC, \LagGenus) \to (\LZC, \LagGenus)$ is a 
  satellite operator of winding number $n$. Then $\Pi$ is a bounded distance 
  from the cabling operator $C_{n,1}^m$, where $m$ is the unique number for 
  which $\rot (\Pi \disjunion -C^m_{n,1}) = 0$.
\end{proposition}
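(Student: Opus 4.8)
The plan is to establish a uniform genus bound: there is a constant $G$, depending only on $\Pi$, with $\LagGenus\paren{\Sigma(\leg,\Pi),\Sigma(\leg,C^m_{n,1})}\leq G$ for \emph{every} Legendrian knot $\leg$.  By the definition of bounded distance between maps of metric spaces (see \cite{CocHar18:ConcGeom}), applied to the operators $\leg\mapsto\Sigma(\leg,\Pi)$ and $\leg\mapsto\Sigma(\leg,C^m_{n,1})$ on $(\LZC,\LagGenus)$, this is exactly the assertion of the proposition.

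First I would fix, once and for all, a Lagrangian zigzag cobordism $\qcobordism$ between the patterns $\Pi$ and $C^m_{n,1}$ \emph{inside} $J^1 S^1$.  Such a cobordism exists by \fullref{cor:lqc-satellite-j1s1}: by hypothesis $n$ is the winding number of $\Pi$, and $m$ is chosen so that $\rot\paren{\Pi\disjunion -C^m_{n,1}}=0$.  Write $\Pi=\Pi_0,\Pi_1,\dotsc,\Pi_N=C^m_{n,1}$ for the intermediate links and $\legjoin(\Pi_{i-1},\Pi_i)$ for the constituent cospans of $\qcobordism$, and set $G=g(\qcobordism)$; the point of working inside $J^1 S^1$ is that this $G$ depends only on $\Pi$ (and the data $n$, $m$ it determines), not on any choice of companion.

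Next I would satellite $\qcobordism$ with the companion $\leg$ held fixed.  Concretely, the Legendrian satellite with companion $\leg$ is induced by the standard identification of a contact neighborhood of $\leg$ with $J^1 S^1$, and this identification extends (after the usual adjustment of the symplectization coordinate) to an embedding $\R\times J^1 S^1\hookrightarrow\R\times\R^3$ carrying exact Lagrangian cobordisms, together with their cylindrical ends and primitives, to exact Lagrangian cobordisms; applying it to each constituent cobordism of $\qcobordism$ — equivalently, invoking the satellite construction for Lagrangian cobordisms of \cite{GuaSabYac22:LegSatellitesCob} in the pattern direction, with the trivial cobordism $\R\times\leg$ on the companion — turns each cospan $\legjoin(\Pi_{i-1},\Pi_i)$ into a cospan $\legjoin\paren{\Sigma(\leg,\Pi_{i-1}),\Sigma(\leg,\Pi_i)}$.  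This produces a Lagrangian zigzag cobordism between $\Sigma(\leg,\Pi)$ and $\Sigma(\leg,C^m_{n,1})$ which, as an abstract surface, is just $\qcobordism$ re-embedded: its constituent cobordisms are still connected, its numbers of boundary components are unchanged, and its genus is again $G$.  Hence $\LagGenus\paren{\Sigma(\leg,\Pi),\Sigma(\leg,C^m_{n,1})}\leq G$ for all $\leg$, as required.

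The step demanding the most care is the one just described: checking that satelliting along the \emph{trivial} cylindrical companion cobordism does produce an honest Lagrangian zigzag cobordism in the sense of \fullref{defn:lqc-relation} — with exactness, the cylindrical-end conditions at $\pm\infty$, and connectedness of each constituent piece all intact — and that the operation preserves the Euler characteristic, hence the genus.  Granting this, one feeds the bound above into \fullref{lem:quasi-isom} to reduce the proof of \fullref{thm:winding-1-qis} to the statement that the relevant cabling operators $C^m_{n,1}$ are themselves quasi-isometries of $\LZC$.
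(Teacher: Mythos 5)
Your proposal is correct and follows essentially the same route as the paper: use \fullref{cor:lqc-satellite-j1s1} to fix a Lagrangian zigzag cobordism between $\Pi$ and $C^m_{n,1}$ inside $J^1 S^1$, note its genus depends only on $\Pi$, and then satellite each constituent cobordism along the companion (the paper invokes \fullref{thm:satellite-concordance}, with the cobordism-level generalization of \cite{GuaSabYac22:LegSatellitesCob} that you cite) to get a uniform genus bound between $\Sigma(\leg,\Pi)$ and $\Sigma(\leg,C^m_{n,1})$, hence bounded distance.
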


\begin{proof}
  The proof of \cite[Proposition~6.3]{CocHar18:ConcGeom} considers the two
  pattern links associated to the two satellite operators, which are 
  homologous in the solid torus.  There, the key observation is that the 
  minimum genus among all compact oriented surfaces cobounded by the two 
  pattern links depends only on $P$. The proof then concludes by embedding 
  the solid torus into $S^3$ as a tubular neighborhood of any given companion 
  $J$.

  We adapt this proof to the present context as follows. First, $\Pi$ 
  corresponds to a pattern Legendrian link $\Pi \subset J^1 S^1$, with $[\Pi] 
  = n \in H_1 (J^1 S^1)$, and the $C^m_{n,1}$-cabling operator 
  corresponds to the pattern Legendrian link $C^m_{n,1} \subset J^1 S^1$.
  By \fullref{cor:lqc-satellite-j1s1}, $\Pi$ is Lagrangian zigzag cobordant 
  to $C_{n,1}^m$.
  One difference from the paragraph above is that, instead of one surface 
  cobounded by $\Pi$ and $C_{n,1}^m$, we now have an interpolating zigzag of 
  Lagrangian surfaces.  Specifically, there exist Legendrian links
  \[
    \Pi = \leg_0, \leg_1, \dotsc, \leg_{k-1}, \leg_k = C_{n,1}^m
  \]
  and also $\leg_i^+$ for $i = 1, \dotsc, k$, together with Lagrangian 
  cobordisms
  \(
    L^<_i \subset \R \cross J^1 S^1
  \)
  from $\leg_{i-1}$ to $\leg_{+,i}$, and cobordisms
  \(
    L^>_i \subset \R \cross J^1 S^1
  \)
  from $\leg_i$ to $\leg_{+,i}$.
  We observe here that the minimum total genus $D$ among all choices of 
  Lagrangian zigzag cobordisms also depends only on $\Pi$. The proof 
  concludes now by embedding each $L^<_i, L^>_i \subset \R \cross J^1 S^1$ 
  into $\R \cross \R^3$
  %as a Darboux neighborhood of any given companion $J$, 
  using \fullref{thm:satellite-concordance},
  to obtain a Lagrangian zigzag cobordism of genus $D$ between $\Pi (J)$ and 
  $C_{n,1}^m (J)$.
\end{proof}

\begin{corollary}
  \label{cor:satellite-bounded-dist}
  Any winding-number-$1$ satellite operator with rotation number $0$ relative 
  to the identity operator is a bounded distance from the identity operator.  
  Any winding-number-$(-1)$ satellite operator with rotation number $0$ 
  relative to the reverse operator is a bounded distance from the reverse 
  operator.  %\qed
\end{corollary}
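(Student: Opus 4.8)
The plan is to read this corollary directly off \fullref{prop:satellite-bounded-dist}, once we identify the cabling patterns $C_{\pm 1,1}$ with the identity and reverse operators.

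Consider first a winding-number-$1$ satellite operator $\Pi$ with rotation number $0$ relative to the identity operator. By construction (see \fullref{fig:cn1}), the pattern $C_{1,1} \subset J^1 S^1$ is Legendrian isotopic to the core of $J^1 S^1$, which is exactly the pattern defining the identity operator; in particular $C_{1,1}(J)$ is Legendrian isotopic to $J$ for every companion Legendrian knot $J$, so $\LagGenus(C_{1,1}(J), J) = 0$. Under this identification, the hypothesis that $\Pi$ has rotation number $0$ relative to the identity operator is precisely the statement $\rot(\Pi \disjunion -C_{1,1}) = 0$, i.e.\ $\rot(\Pi \disjunion -C_{1,1}^0) = 0$. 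By the uniqueness of the stabilization parameter in \fullref{cor:lqc-satellite-j1s1}, this forces $m = 0$ in \fullref{prop:satellite-bounded-dist}. That proposition, applied with $n = 1$, then tells us that $\Pi$ is a bounded distance from $C_{1,1}^0 = C_{1,1}$; combining this with $\LagGenus(C_{1,1}(J), J) = 0$ and the triangle inequality for $\LagGenus$, we conclude that $\Pi$ is a bounded distance from the identity operator.

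The winding-number-$(-1)$ case runs verbatim with the core replaced by the core carrying the opposite orientation. By construction $C_{-1,1}$ is Legendrian isotopic to this reversed core, which is the pattern underlying the reverse operator, so the hypothesis again reads $\rot(\Pi \disjunion -C_{-1,1}) = 0$, forcing $m = 0$ in \fullref{prop:satellite-bounded-dist}; hence $\Pi$ is a bounded distance from $C_{-1,1}^0 = C_{-1,1}$, i.e.\ from the reverse operator.

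I do not expect a genuine obstacle, since the substantive content is already contained in \fullref{prop:satellite-bounded-dist}. The one place to be careful is verifying that $C_{\pm 1,1}$ really does represent the identity (respectively reverse) operator on the nose rather than some stabilization of it, and correspondingly that the condition of rotation number $0$ relative to the identity (or reverse) operator is exactly the normalization that pins the parameter $m$ of \fullref{prop:satellite-bounded-dist} to $0$.
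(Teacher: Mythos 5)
Your proposal is correct and follows essentially the same route as the paper, whose proof simply invokes \fullref{prop:satellite-bounded-dist} together with the definition of relative rotation numbers; your identification of $C_{\pm 1,1}$ with the core of $J^1 S^1$ (with either orientation), which pins the stabilization parameter to $m=0$, is exactly the implicit content of that one-line argument, just spelled out.
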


\begin{proof}
  This follows from \fullref{prop:satellite-bounded-dist} and the definition 
  of relative rotation numbers.
\end{proof}

\begin{proof}[Proof of \fullref{thm:winding-1-qis}]
  This is a direct consequence of \fullref{lem:quasi-isom} and 
  \fullref{cor:satellite-bounded-dist}.
\end{proof}

%

% !TEX root = lqc.tex

\section{\texorpdfstring{Maslov-$0$}{Maslov-zero} Lagrangian zigzag cobordism}
\label{sec:maslov-0}

The key result in this section is \fullref{thm:maslov-0}, a Maslov-$0$ 
refinement of the main theorem of \cite{SabVelWon21:MaxLeg} when the ambient 
contact manifold is the standard contact $\R^3$.  The result is not only 
interesting in its own right, but also facilitates applications of Legendrian 
contact homology to questions of Lagrangian zigzag cobordisms, as we shall see 
in the next section.

The proof parallels that in \cite[Section~4]{SabVelWon21:MaxLeg}:  we first 
find a Legendrian $\leg_-$ and Lagrangian cobordisms $L_-$ from $\leg_-$ to 
$\leg$ and $L_-'$ from $\leg_-$ to $\leg'$.  We record these cobordisms using 
``handle graphs'' $\graph$ and $\graph'$ on $\leg_-$ (see 
\cite[Section~2.4]{SabVelWon21:MaxLeg} and below), and then attach the handles 
in both $\graph$ and $\graph'$ to $\leg_-$ to create the desired $\leg_+$.  The 
 new step in this version of the proof is to take additional care to ensure that the handle graphs encode 
Maslov-$0$ Lagrangian cobordisms.

% **********
\subsection{\texorpdfstring{Maslov-$0$}{Maslov-zero} handle graphs}
\label{ssec:maslov-0-graph}

A Legendrian handle graph in the standard contact $\R^3$ 
is defined to be a pair $(\graph, \leg)$, where $\graph$ is a trivalent 
Legendrian graph and $\leg \subset \graph$ is a Legendrian link so that the 
vertices of $\graph$ all lie on $\leg$ and the set $\handles$ of edges not in 
$\leg$ is a finite collection of pairwise disjoint Legendrian arcs.  As 
described in \cite{SabVelWon21:MaxLeg}, work of Dimitroglou Rizell 
\cite{Dim16:LegAmbSurg} or, in this case, Ekholm, Honda, and K\'alm\'an 
\cite{EkhHonKal16:LagCob}, implies that performing Legendrian ambient surgery 
on a subset $\handles_0 \subset \handles$ yields an exact Lagrangian cobordism $L(\graph, \leg, 
\handles_0)$ from $\leg$ to the Legendrian $\surg(\graph, \leg, \handles_0)$.  If all edges $\handles$ 
are used in the ambient surgery, we simply use the notation 
$\surg(\graph,\leg)$.  The goal of this section is to refine these ideas to keep 
track of Maslov indices.

We build on the notion of Maslov potentials for Legendrian graphs developed 
by \cite{AnBae20:LegGraphsDGA} and exposited for front diagrams in 
\cite{AnBaeKal22:LegGraphsDGA}.  Denote the vertices of $\graph$ by $\vertices$ 
and the cusps of the front diagram of $\graph$ by $\cusps$.  An  
\dfn{$m$-graded Maslov potential} on the front diagram of a Legendrian handle 
graph $(\graph, \leg)$ is a function $\mu$ from the components of $\graph 
\setminus (\vertices \cup \cusps)$ to $\Z/m$ that satisfies \[\mu(u) = 
  \mu(l)+1\]
when the strands $u$ and $l$ meet at a cusp with the $z$ values of $l$ lower 
than those of $u$.  See \fullref{fig:maslov-vertices}~(a).  It is 
straightforward to check that the front diagram of a Legendrian link whose 
components have vanishing rotation numbers has a $0$-graded Maslov potential.

\begin{definition} \label{defn:maslov-0-graph}
  A \dfn{Maslov-$m$ handle graph} is a handle graph $(\graph, \leg)$ together 
  with an $m$-graded Maslov potential $\mu$ on its front diagram such that, at 
  each vertex of $\graph$, the values of $\mu$ of the adjacent edges are as in 
  \fullref{fig:maslov-vertices}~(b). We denote a Maslov-$m$ handle graph by a 
  triple $(\graph, \leg, \mu)$.

  \begin{figure}[!htb]
    \labellist
    \small\hair 2pt
    \pinlabel {$a+1$} [bl] at 27 48
    \pinlabel {$a$} [tl] at 27 29
    \pinlabel {(a)} [ ] at 27 5
    \pinlabel {$a+1$} [bl] at 133 48
    \pinlabel {$a$} [tl] at 133 29
    \pinlabel {$a$} [] at 166 44
    \pinlabel {$a$} [bl] at 206 49
    \pinlabel {$a$} [tl] at 206 31
    \pinlabel {$a$} [bl] at 276 48
    \pinlabel {$a-1$} [tl] at 276 30
    \pinlabel {(b)} [ ] at 212 5
    \endlabellist
    \includegraphics{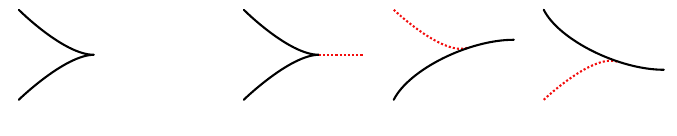}
    \caption{(a) A Maslov potential near a cusp. (b) A Maslov potential of the 
      front diagram of a Maslov-$m$ handle graph must satisfy three 
      compatibility conditions at the vertices.  The link $\leg$ is indicated 
      by solid arcs, while the handles are indicated by dotted arcs.}
    \label{fig:maslov-vertices}
  \end{figure}
\end{definition}

\begin{lemma} \label{lem:maslov-0-well-def}
  The property of being a Maslov-$m$ handle graph is invariant under Legendrian 
  isotopy.% \qed
\end{lemma}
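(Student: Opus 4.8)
The plan is to reduce the statement to a finite list of local moves in the front projection. By the front-projection description of Legendrian isotopy for trivalent Legendrian graphs (see \cite{AnBae20:LegGraphsDGA, AnBaeKal22:LegGraphsDGA}), two Legendrian handle graphs that are Legendrian isotopic as graphs carrying a distinguished sublink through the trivalent vertices and a distinguished collection $\handles$ of handle edges have front diagrams related by planar isotopy and a finite sequence of local moves: the three Legendrian Reidemeister moves performed away from the vertices, together with the moves that drag a trivalent vertex through a cusp, slide a strand past a vertex, and permute the cyclic order of the three edges meeting at a vertex. Each such move is supported in an embedded disk $\disk$, and an $m$-graded Maslov potential $\mu$ restricts to one on the part of the front outside $\disk$; so it suffices to check, move by move, that $\mu$ extends uniquely across $\disk$ and that the extension again satisfies the cusp relation and the vertex conditions of \fullref{fig:maslov-vertices}.

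First I would dispatch the moves that do not involve a vertex. For those the only relevant relations are the classical ones recorded in \fullref{defn:maslov-0-graph}: two strands meeting at a cusp carry potentials differing by $1$, and the two strands at a crossing carry equal potentials. These relations are manifestly preserved by Legendrian Reidemeister moves~I, II, and III, so $\mu$ transports across each such move; since no vertex lies in $\disk$, the conditions of \fullref{fig:maslov-vertices}~(b) are untouched. This is simply the classical invariance of a Maslov potential under Legendrian isotopy, applied to the link portion of $\graph$.

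Next I would treat the moves involving a vertex, which is where the actual content lies. The vertex conditions of \fullref{fig:maslov-vertices}~(b) are themselves local constraints: at a trivalent vertex the potentials of the two link edges and of the handle edge must form one of the three listed patterns. For each vertex move one checks directly that the potential on the edges entering $\disk$, together with the applicable vertex pattern and the cusp relation inside $\disk$, determines the potential on all edges of $\disk$, and that the resulting labeling realizes one of the three patterns at every vertex on the other side of the move. For example, dragging a vertex through a cusp interchanges whether the handle edge is the locally uppermost or the locally lowermost strand, and one verifies that the pattern $(a,a,a)$ is carried to $(a+1,a,a)$ or to $(a,a-1,a)$, with the unit shift absorbed by the cusp relation; the strand-past-a-vertex and edge-permutation moves are verified in the same manner. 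I expect this case analysis to be essentially the entire proof, and the only place demanding care is bookkeeping: at each stage one must keep track of which of the three edges at a vertex is the handle edge and which strand of $\leg$ lies higher, since the three patterns of \fullref{fig:maslov-vertices}~(b) are sensitive to exactly this data.

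Finally, Legendrian isotopy preserves the partition of the edges of $\graph$ into link edges and handle edges and preserves, at each vertex, which two edges lie on $\leg$; composing the potentials transported across a sequence of moves then shows that if $(\graph, \leg, \mu)$ is a Maslov-$m$ handle graph and $(\graph,\leg)$ is Legendrian isotopic to $(\graph',\leg')$, the transported potential $\mu'$ makes $(\graph', \leg', \mu')$ a Maslov-$m$ handle graph. This proves the lemma.
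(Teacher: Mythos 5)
Your approach is essentially the paper's: its proof is precisely a case-by-case verification of the local moves generating Legendrian isotopy of graph fronts (the paper invokes the six Reidemeister moves for Legendrian graphs of O'Donnol--Pavelescu rather than the An--Bae formulation, but the content and the vertex-move case analysis are the same). One correction: an $m$-graded Maslov potential is a function on the components of $\graph \setminus (\vertices \union \cusps)$ subject only to the cusp relation and the vertex patterns of \fullref{fig:maslov-vertices}, so there is \emph{no} condition at crossings; your claim that ``the two strands at a crossing carry equal potentials'' should be deleted, since were it part of the definition, invariance under Reidemeister~II (which can create crossings between strands of unequal potential) would fail. With that spurious condition removed, your move-by-move transport of the potential is exactly the intended argument.
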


\begin{proof}
  The proof is a straightforward, if somewhat tedious, case-by-case check of 
  invariance under the six Reidemeister moves for Legendrian graphs in 
  \cite{ODoPav12:LegGraphs}.
\end{proof}

A vertex $v$ of the front diagram of a handle graph $(\graph, \leg)$ may be 
classified as either \dfn{smooth} if $v$ is a smooth point of the diagram of $\leg$ or 
\dfn{cusped} if $v$ is a cusp of the diagram. We say that a front diagram of a 
handle graph has \dfn{flat handles} if all vertices are cusped and there are no 
cusps on the interiors of the handles; see \fullref{fig:flat-handle} for an 
example.
\begin{figure}[!htb]
	\includegraphics{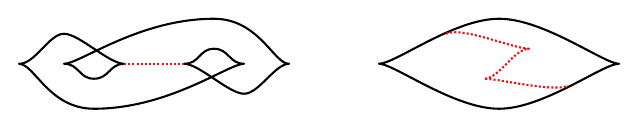}
	\caption{Legendrian handle graphs with (a) a flat handle and (b) a non-flat handle, both because the handle has cusps on the interior and because the vertices at its ends are smooth.}
	\label{fig:flat-handle}
\end{figure}

% **********
\subsection{Surgery on handle graphs}
\label{ssec:surgery-handle-graph}

We now have sufficient material to discuss surgery on handle graphs.

\begin{lemma}
  \label{lem:maslov-0-flat-handle}
  Any Maslov-$m$ handle graph $(\graph, \leg, \mu)$ is Legendrian isotopic to a 
  Maslov-$m$ handle graph whose front diagram has flat handles.
\end{lemma}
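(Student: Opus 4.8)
The plan is to eliminate the two obstructions to flatness---smooth vertices of $\graph$ and cusps in the interiors of the handles $\handles$---by purely local Legendrian isotopies of the graph, treating one handle at a time. This is legitimate because the handles are pairwise disjoint arcs and, since $\leg$ is a link and $\graph$ is trivalent, each vertex carries exactly two $\leg$-edges and one handle-edge; hence each vertex belongs to a unique handle and all of the modifications below take place in disjoint neighborhoods. A convenient feature of this approach is that, once each step is known to be a Legendrian isotopy of the \emph{graph}, \fullref{lem:maslov-0-well-def} guarantees that the Maslov-$m$ structure---in particular the vertex compatibility conditions of \fullref{fig:maslov-vertices}~(b)---is carried along automatically, so no separate bookkeeping of the Maslov potential is needed.

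First I would cuspidize the vertices. Let $v$ be a smooth vertex with incident handle-edge $h$; since $v$ is smooth, the conditions of \fullref{fig:maslov-vertices}~(b) force the two $\leg$-strands through $v$ and the handle $h$ all to carry the same Maslov value $a$. In a small ball around $v$ that meets $\graph$ only in these three edges, I would perform a Legendrian isotopy of $\graph$ inserting a small positive kink---two cusps and one self-crossing of positive sign---on the strand of $\leg$ through $v$; this is a Legendrian isotopy, so it does not alter the Legendrian type of $\leg$, and the two new cusps carry strand values $\{a+1,a\}$ and $\{a,a-1\}$. I would then slide the endpoint of $h$ along $\leg$ onto one of these two cusps and stop there (again a Legendrian graph isotopy, cf.\ \cite{ODoPav12:LegGraphs}), so that $v$ becomes a cusped vertex; the leftover cusp and self-crossing of the kink remain on $\leg$ away from every vertex, which is harmless. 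Routing $h$ onto the first cusp realizes the configuration of \fullref{fig:maslov-vertices}~(b) in which the handle value equals the lower-strand value, and onto the second the configuration in which it equals the upper-strand value---admissible either way, and in any case forced by \fullref{lem:maslov-0-well-def}. Iterating over all smooth vertices, I may assume every vertex of $\graph$ is cusped.

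With both endpoints of each handle $h$ now sitting at cusps of $\leg$, the second step is to flatten its interior. Here I would first slide those two endpoint-cusps along $\leg$ so that they have distinct $x$-coordinates (introducing further small positive kinks as above to make room, if necessary), and then isotope $h$---which in the front may cross the rest of $\graph$ freely, since such crossings are not genuine intersections---into a thin neighborhood of an $x$-monotone arc joining its two endpoints, thereby removing every cusp from its interior and producing a flat handle as in \fullref{fig:flat-handle}~(a). Performing this over all handles leaves the front diagram of $\graph$ in a form with flat handles, which completes the argument.

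The step I expect to be the main obstacle is the first one, and its delicate point is exactly that converting a smooth vertex into a cusped one must be realized by an honest Legendrian isotopy of the graph rather than by a stabilization of $\leg$, which would change its Legendrian type. The positive kink is what makes this possible: it is a type-preserving local move on $\leg$, after which the handle endpoint merely slides onto an already-present cusp, and---crucially---because the strand through $v$ carried the value $a$ on \emph{both} sides, both of the new cusps are Maslov-compatible with the value $a$ of the handle, so the resulting cusped vertex automatically meets one of the configurations of \fullref{fig:maslov-vertices}~(b). By contrast, sliding the handle endpoint onto a pre-existing cusp of $\leg$ would in general produce an inadmissible vertex configuration, which is why the fresh kink at $v$ is essential; everything after that, including the general-position routing of the cusp-free handle in the second step, is routine.
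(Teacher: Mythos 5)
There is a genuine gap, and it sits exactly in the step you dismiss as ``routine.'' Your first step (cuspidizing a smooth vertex by a swallowtail move on $\leg$ near $v$ and sliding the handle endpoint onto a newly created cusp) is a reasonable alternative to the paper's route, which instead converts smooth vertices to cusped ones by the graph Reidemeister move RVI of \cite{ODoPav12:LegGraphs} as in \fullref{fig:flat-handle-pf}; one small correction is that the two cusps of a single swallowtail kink on a strand of potential $a$ both carry the pair $\{a+1,a\}$ (or both $\{a,a-1\}$, for the mirror kink), not one pair of each, since the short cap between them is cusp-free and has a single potential value --- harmless for your argument, since either pair is compatible with the handle value $a$.

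The problem is the second step. You claim that once both endpoints of a handle $h$ sit at cusps, $h$ can be isotoped rel endpoints into a thin neighborhood of an $x$-monotone arc, justified only by the fact that transverse front crossings are not genuine intersections. But crossings are not the obstruction: the signed count of interior cusps of $h$ --- equivalently the jump of the Maslov potential of $h$ between its two ends, or the winding of its front tangent relative to the fixed approach directions at the endpoints --- is invariant under Legendrian isotopy of $h$ rel endpoints, and it is nonzero whenever the handle joins regions of $\leg$ whose strands carry different Maslov potentials (such Maslov-$m$ handle graphs exist in abundance, e.g.\ a handle on a stabilized unknot joining strands of potentials $0$ and $2$). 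Opposite-sign cusp pairs on $h$ can be cancelled by swallowtail deaths, but a nonzero net count cannot be removed by any ``general-position routing''; it can only be pushed off $h$ through its endpoint vertices, rotating the handle's tangent through the vertical there --- this is the move RVI (preceded by RIV to carry the cusp past crossings) that the paper uses in \fullref{fig:flat-handle-pf}, and it changes the local attachment configuration and the potential bookkeeping at the vertex (whence the values $a$, $a+1$, $a+2$ appearing there), which must then be rechecked against \fullref{fig:maslov-vertices}~(b) (or handled by \fullref{lem:maslov-0-well-def}). Your proposal contains no mechanism for this, so as written it fails for every handle with nonzero net cusp count; the delicate point of the lemma is this step, not the cuspidization of smooth vertices.
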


\begin{proof}
  Convert every smooth vertex of $\graph$ to a cusp vertex as in 
  \fullref{fig:flat-handle-pf}~(a), then eliminate all cusps on handles as in 
  \fullref{fig:flat-handle-pf}~(b). The result is a handle graph whose front 
  diagram has flat handles.
  \begin{figure}[!htb]
    \labellist
    \small\hair 2pt
    \pinlabel {$a$} [bl] at 22 152
    \pinlabel {$a$} [b] at 78 145
    \pinlabel {$a$} [b] at 78 54
    \pinlabel {$a+1$} [b] at 49 54
    \pinlabel {$a$} [tl] at 20 28
    \pinlabel {$a$} [bl] at 20 63
    \pinlabel {$a+1$} [bl] at 188 157
    \pinlabel {$a$} [tl] at 206 130
    \pinlabel {$a+1$} [bl] at 152 135
    \pinlabel {$a$} [tl] at 152 113
    \pinlabel {$a+1$} [bl] at 250 108
    \pinlabel {$a+1$} [br] at 247 87
    \pinlabel {$a$} [tl] at 266 73
    \pinlabel {$a+1$} [bl] at 305 61
    \pinlabel {$a+2$} [b] at 354 55
    \pinlabel {$a+1$} [br] at 323 33
    \pinlabel {$a$} [tl] at 336 18
    \pinlabel {RVI} [l] at 50 93
    \pinlabel {RIV} [tr] at 188 90
    \pinlabel {RVI} [tr] at 258 38
    \endlabellist
    \includegraphics{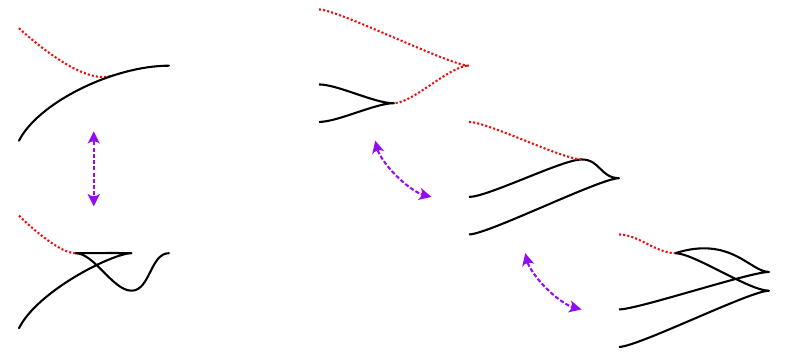}
    \caption{Converting a smooth vertex to a cusp vertex, left, and eliminating a cusp on a handle, right.}
    \label{fig:flat-handle-pf}
  \end{figure}
\end{proof}

\begin{lemma} \label{lem:flat-handle-lagr}
	Legendrian ambient surgery on a Maslov-$0$ handle graph $(\graph, \leg)$ yields a Maslov-$0$ Lagrangian cobordism from $\leg$ to $\surg(\graph, \leg)$.
\end{lemma}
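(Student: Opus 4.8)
The plan is to combine the surgery-theoretic construction of the Lagrangian cobordism with a computation of the Maslov potential of the resulting cobordism. Recall from \cite{EkhHonKal16:LagCob, Dim16:LegAmbSurg} that Legendrian ambient surgery on a handle graph $(\graph, \leg)$ produces an exact Lagrangian cobordism $L = L(\graph, \leg, \handles)$ from $\leg$ to $\surg(\graph, \leg)$; concretely, $L$ is built from the trace of the Legendrian isotopy that brings $\graph$ into flat-handle form, followed by the standard Lagrangian handle attachments along the (now flat) handles in $\handles$. The Maslov number of a Lagrangian cobordism is computed from the Maslov class of $L$, which in turn is governed by the behavior of a grading on the generating family / front description; in the decomposable setting, the Maslov potential on $\leg$ extends over the trace cobordism automatically (Legendrian isotopy does not change the Maslov number), so the only place a nontrivial Maslov class can be introduced is at the $1$-handle attachments.

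First I would invoke \fullref{lem:maslov-0-flat-handle} to replace $(\graph, \leg, \mu)$ by a Legendrian-isotopic Maslov-$0$ handle graph with flat handles, noting by \fullref{lem:maslov-0-well-def} that this preserves all the relevant structure, and that the trace of this isotopy is a Maslov-$0$ Lagrangian cylinder (indeed the trace of any Legendrian isotopy has Maslov number $0$ since it is topologically a product). So it suffices to treat the case of a Maslov-$0$ handle graph already in flat-handle form, and to show that attaching a single Lagrangian $1$-handle along a flat handle whose two feet carry Maslov potential values prescribed by \fullref{fig:maslov-vertices}~(b) yields a Maslov-$0$ cobordism; the general case follows by attaching the handles one at a time and using that the Maslov number of a concatenation divides (in fact here equals $0$ when each piece does).

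Next I would carry out the local Maslov computation for a single flat-handle surgery. The key point is that the compatibility conditions at the two trivalent vertices of a Maslov-$0$ handle graph force the handle arc to carry a well-defined value of $\mu$ (the middle configuration in \fullref{fig:maslov-vertices}~(b) with all three incident values equal to $a$), and that the two strands of $\leg$ being surgered together — one with potential $a$ just below, matching the handle — glue up in the handle attachment so that the resulting closed-up front strand of $\surg(\graph, \leg)$ inherits a consistent potential. One then checks that the Lagrangian handle cobordism $L$ itself admits a function to $\Z$ restricting to $\mu$ on $\leg$ and to the induced potential on $\surg(\graph,\leg)$ — equivalently, that the Maslov class $\mu_L \in H^1(L;\Z)$ vanishes. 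Since a $1$-handle cobordism is topologically a pair of pants, $H^1(L;\Z) \cong \Z$ is generated by a loop passing once over the handle, and the vanishing of $\mu_L$ on that loop is exactly the statement that the Maslov potential values on the two feet are consistent with the potential on the handle, which is guaranteed by \fullref{defn:maslov-0-graph}. Hence $L$ is Maslov-$0$.

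The main obstacle I anticipate is the bookkeeping at the vertices: one must verify that the three vertex-compatibility conditions of \fullref{fig:maslov-vertices}~(b), together with the cusp condition defining an $m$-graded Maslov potential, really do propagate to give a globally consistent $\Z$-valued (not just $\Z/m$-valued) grading on the handle cobordism, i.e.\ that no monodromy around the handle is introduced. This is essentially a local check at each of the (at most two) vertices of each handle and along the flat handle itself, done in the front projection; it is the honest content of the lemma, but it is a finite case analysis rather than a conceptual difficulty, and it parallels the ungraded construction in \cite{SabVelWon21:MaxLeg, EkhHonKal16:LagCob} with the grading carried along.
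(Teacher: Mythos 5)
Your proposal follows essentially the same route as the paper's proof: reduce to flat handles via \fullref{lem:maslov-0-flat-handle}, observe that isotopy traces and flat-handle attachments extend the $0$-graded Maslov potential over the front of the Legendrian lift of the cobordism, and conclude that every loop has Maslov index $0$ (the paper cites \cite[Section~2.2]{RutSul20:CellularLCH1} for this last step rather than phrasing it as vanishing of the Maslov class on $H^1(L)$). One small slip: for a pair of pants $H^1(L;\Z)\isom\Z^2$, not $\Z$, but the extra boundary-parallel classes cause no trouble since the Maslov-$0$ handle graph hypothesis forces a $0$-graded potential on (hence vanishing rotation numbers of) the ends, so your argument is unaffected.
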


\begin{proof}
  \fullref{lem:maslov-0-flat-handle} allows us to assume that after an isotopy, 
  all handles are flat.  As shown in \fullref{fig:maslov-0-surface}, both 
  Legendrian isotopy and attachment of a flat $1$-handle generate fronts for 
  Legendrian lifts of cobordisms with $0$-graded Maslov potentials.  It follows 
  that any loop on the front diagram for $L$ passes through the same number of 
  cusps in an upward direction as it does in a downward direction.  As in 
  \cite[Section~2.2]{RutSul20:CellularLCH1}, this shows that every loop has 
  Maslov index $0$, and hence that the cobordism $L$ has Maslov number $0$.
  \begin{figure}[!htb]
    \labellist
    \small\hair 2pt
    \pinlabel {$a+1$} [l] at 74 73
    \pinlabel {$a$} [l] at 75 53
    \pinlabel {$a+1$} [l] at 76 28
    \pinlabel {$a$} [l] at 76 11
    \pinlabel {$a+1$} [br] at 160 51
    \pinlabel {$a$} [tl] at 217 28
    \endlabellist
    \includegraphics{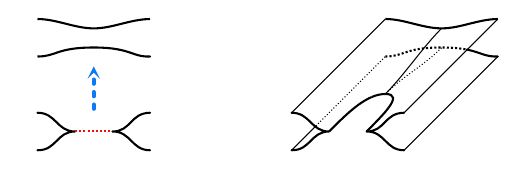}
    \caption{Surgery on a flat handle in a Maslov-$0$ handle graph yields a $0$-graded Maslov potential on the front of the associated Lagrangian cobordism.}
    \label{fig:maslov-0-surface}
  \end{figure}
\end{proof}

\begin{remark}
  A version of this result is well known to experts, namely that a decomposable 
  Lagrangian cobordism has Maslov number $0$ whenever
    \begin{enumerate*}[label=(\alph*)]
      \item its $1$-handles are attached along flat handles; and
      \item these flat handles are between cusps whose incident strands have 
        matching Maslov potentials.
    \end{enumerate*}
  The purpose of the handle graph language introduced above is mainly to enable 
  work with handle graphs without flat handles in the next subsection.
\end{remark}

%\js{Define handle graph and surgery on that handle graph.  Note that a $1$-handle between two cusps with the same Maslov potentials yields a Maslov zero cobordism.  Also need to define the grading of a $1$-handle to be the grading of the Reeb chord that is created --- want to talk about positively graded pinch moves below. Namely, if we have a Maslov potential at the top of the cobordism, the grading of the $1$-handle is the difference in Maslov potentials of the strands joined by the co-core.}

%\js{Note that if we can consistently define an integral Maslov potential before and after all cobordism operations --- that is, if we can define a Maslov potential on the Legendrian lift of the cobordism! --- then the cobordism is Maslov $0$.  Equivalently, it suffices for all of the Reeb chords we shrink in ``pinching'' from top to bottom to have grading zero, as in \cite[Remark 3.7]{casals-ng}}

% **********
\subsection{\texorpdfstring{Maslov-$0$}{Maslov-zero} handle graphs for zigzag 
  cobordisms}
\label{ssec:maslov-0-qc}

The next steps in the proof of \fullref{thm:maslov-0} parallel those in 
\cite[Section~4]{SabVelWon21:MaxLeg}, making the diagrammatic operations in 
\cite[Lemmas~3.2, 4.2, 4.3, 4.4]{SabVelWon21:MaxLeg} that take a Legendrian 
$\leg$ and produce a handle graph $\graph$ on a stabilized unknot $\leg_-$ with 
$\leg = \surg(\graph, \leg_-)$ into Maslov-$0$ operations.  This procedure 
involves the operation of pinching between two parallel strands of a front 
diagram to create a handle in a handle graph. We note that pinching two 
adjacent strands in a Maslov-$m$ handle graph produces another Maslov-$m$ 
handle graph if the Maslov potential of the higher strand is (modulo $m$) one 
more than that of the lower strand.

The necessary modifications to the lemmas in \cite{SabVelWon21:MaxLeg} are 
described in the following two lemmas.  The first of these constructs a 
Maslov-$0$ cobordism from a double-stabilization to the original link; this 
lemma replaces \cite[Lemma~3.2]{SabVelWon21:MaxLeg}.

\begin{lemma} \label{lem:maslov-0-stab}
  Given a Legendrian link $\leg$ all of whose components have vanishing 
  rotation number, there exists a Maslov-$0$ handle graph $\graph$ on 
  $S_{+-}(\leg)$ with $\leg = \surg(\graph, S_{+-}(\leg))$.% \qed
\end{lemma}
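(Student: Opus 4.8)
The plan is to take the handle graph produced in \cite[Lemma~3.2]{SabVelWon21:MaxLeg} and endow it with a compatible $0$-graded Maslov potential. Recall that for any Legendrian link $\leg$ that lemma produces a handle graph on $S_{+-}(\leg)$ consisting of $S_{+-}(\leg)$ together with a single handle $h$ localized near the stabilization, such that Legendrian ambient surgery along $h$ recovers $\leg$; outside a small disk $\disk$ around the stabilization region, its front diagram agrees with a front diagram for $\leg$. Since the double stabilization does not change any rotation number, every component of $S_{+-}(\leg)$ again has vanishing rotation number, so its front admits a $0$-graded Maslov potential. What must be shown is that one can be chosen and extended over $h$ so that the three vertex conditions of \fullref{defn:maslov-0-graph} hold.

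First I would fix a $0$-graded Maslov potential $\mu$ on a front diagram for $\leg$; this exists because every component has rotation number $0$. Outside $\disk$ this transports verbatim to the front of $S_{+-}(\leg)$. Inside $\disk$, the front of $S_{+-}(\leg)$ is the standard local model obtained from a single strand (of potential $a$, say) by one positive and one negative stabilization. The cusp relation $\mu(u) = \mu(l) + 1$ forces the potentials of the two short new strands; because the two stabilizations contribute opposite increments, these forced values agree with the ambient value $a$ where $\bdy \disk$ is crossed, so $\mu$ extends over $S_{+-}(\leg)$.

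It remains to grade the handle $h$, which runs between the two parallel strands created inside $\disk$. In the local model these two strands carry potentials that differ by exactly $1$; writing the higher one as $a+1$ and the lower one as $a$, assign $h$ the constant potential $a$. One then checks, at each of the two trivalent vertices where $h$ meets $S_{+-}(\leg)$, that the incident strands realize one of the permitted configurations of \fullref{fig:maslov-vertices}(b); this is a short, explicit local picture check. The resulting triple is a Maslov-$0$ handle graph $\graph$ on $S_{+-}(\leg)$, and by \cite[Lemma~3.2]{SabVelWon21:MaxLeg} we have $\leg = \surg(\graph, S_{+-}(\leg))$. We do not need $h$ to be flat at this stage; \fullref{lem:maslov-0-flat-handle} will be applied later when handle graphs are converted into Lagrangian cobordisms.

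The only step needing care is the bookkeeping of potentials inside $\disk$: one must confirm that the two strands meeting $h$ differ in potential by exactly $1$ and that the vertex conditions hold, possibly after a harmless interchange of the two stabilizations. I expect no real obstacle, since the ungraded statement is already \cite[Lemma~3.2]{SabVelWon21:MaxLeg} and the behavior of a $0$-graded Maslov potential under stabilization and under pinching has been recorded in the discussion preceding this lemma.
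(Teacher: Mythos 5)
Your overall strategy---take the handle graph that undoes the double stabilization and equip it with a compatible $0$-graded Maslov potential---is the same as the paper's, which simply exhibits the graded graph explicitly in \fullref{fig:maslov-0-stab}. However, your description of that graph contains a concrete error: the handle graph of \cite[Lemma~3.2]{SabVelWon21:MaxLeg} cannot consist of a single handle $h$ localized near the stabilization. Surgery on one coherently oriented $1$-handle changes the number of components by exactly one (so for $\leg$ a knot it turns the knot $S_{+-}(\leg)$ into a two-component link, not $\leg$), and each handle contributes $-1$ to the Euler characteristic of the resulting cobordism, whereas $\tb(S_{+-}(\leg)) = \tb(\leg) - 2$ forces $-\chi = 2$ by \fullref{prop:cob-classical}. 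The cobordism from $S_{+-}(\leg)$ to $\leg$ is a genus-$1$ cobordism (this is precisely the point of \fullref{rem:zz-genus-neq-smooth-genus}), so the handle graph has \emph{two} handles.

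Because of this, the ``short, explicit local picture check'' you defer is not the check you describe. The double-stabilization region contains more than two new strands, and one must verify that \emph{both} pinches can be arranged between strands whose Maslov potentials differ by exactly $1$ with the larger value on the upper strand; this ordering condition is exactly the issue that forces the additional stabilizations in \fullref{lem:maslov-0-surgery}, so it is not automatic and is the only substantive content of the lemma. In the paper's figure the stabilization region is drawn so that the relevant parallel strands carry potentials $a$, $a+1$, $a+2$ and both handles join strands whose potentials differ by one, after which the $0$-graded potential on $\leg$ (which exists since all rotation numbers vanish and is preserved by $S_{+-}$, as in your second paragraph) extends over the whole graph. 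With the correct two-handle graph your argument can be completed along these lines, but as written it rests on a false description of the cited construction and omits the verification that constitutes the proof; you should exhibit the graded local picture, as the paper does.
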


\begin{proof}
The proof is contained in \fullref{fig:maslov-0-stab}, with the condition that 
the rotation number is $0$ yielding a $0$-graded Maslov potential on $\leg$, 
and hence on $S_{+-}(\leg)$.
\begin{figure}[!htb]
  \labellist
  \small\hair 2pt
  \pinlabel {$a$} [l] at 94 15
  \pinlabel {$a$} [t] at 39 20
  \pinlabel {$a$} [t] at 77 20
  \pinlabel {$a+1$} [l] at 140 65
  \pinlabel {$a$} [l] at 131 15
  \pinlabel {$a+1$} [l] at 131 30
  \pinlabel {$a+2$} [l] at 123 52
  \pinlabel {$a+1$} [r] at 338 135
  \pinlabel {$a$} [r] at 338 95
  \pinlabel {$a+1$} [b] at 245 115
  \pinlabel {$a$} [t] at 245 98
  \pinlabel {$a+1$} [b] at 186 112
  \pinlabel {$a$} [t] at 186 96
  \endlabellist
  \includegraphics{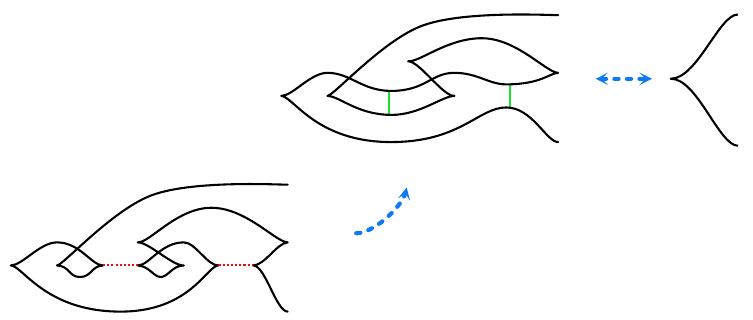}
  \caption{A handle graph on $S_{+-}(\leg)$ that yields $\leg$ under Maslov-$0$ surgery.}
  \label{fig:maslov-0-stab}
\end{figure}
\end{proof}

The next lemma allows us to replace an orientable $1$-handle attachment from $\leg_-$ to $\leg_+$ with a Maslov-$0$ $1$-handle attachment from a stabilization of $\leg_-$ to $\leg_+$.  We say that an ambient surgery along a flat handle is \dfn{ordered} if the Maslov potential at the top of the co-core of the handle in $\leg_+$ is larger than the Maslov potential at the bottom.

\begin{lemma} \label{lem:maslov-0-surgery}
  Suppose the Legendrian links $\leg_-$ and $\leg_+$ have components with 
  vanishing rotation number.  If there is a Lagrangian cobordism from $\leg_-$ 
  to $\leg_+$ induced by surgery on a single ordered $1$-handle in a handle 
  graph on $\leg_-$, then for some $k \geq 1$, there exists a Maslov-$0$ 
  Lagrangian cobordism from $(S_{+-})^k (\leg_-)$ to $\leg_+$ induced by 
  surgery on a Maslov-$0$ handle graph on $(S_{+-})^k (\leg_-)$.  Further, the front for $(S_{+-})^k (\leg_-)$ has the same crossings as that of $\leg_-$.
\end{lemma}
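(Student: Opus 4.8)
The plan is to measure the failure of the given handle graph to be Maslov-$0$ by a single integer, and then to remove that integer by threading the handle through an appropriate number of double stabilizations of $\legm$, each of which is simultaneously ``undone'' by adjoining to the handle graph a pair of auxiliary handles in the manner of \fullref{lem:maslov-0-stab}. To set up the discrepancy, write $(\graph,\legm)$ for the given handle graph, with its single ordered $1$-handle $h$, feet $p_1,p_2\in\legm$, and $\surg(\graph,\legm)=\legp$. Since every component of $\legm$ has vanishing rotation number, its front carries a $0$-graded (that is, $\Z$-valued) Maslov potential $\mu$. The vertex conditions of \fullref{defn:maslov-0-graph} prescribe, at each foot of $h$, the value that any extension of $\mu$ over $h$ must take there; comparing this requirement with the cusp data along $h$ shows that $\mu$ extends to make $(\graph,\legm,\mu)$ a Maslov-$0$ handle graph exactly when a certain integer \emph{defect} $d=d(h,\mu)$ vanishes, and a Legendrian isotopy of the arc $h$ alone cannot change $d$ --- which is why $\legm$ itself must be modified. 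The cobordism from $\legm$ to $\legp$ determined by the original handle has Maslov number $|d|$, since every loop inherited from $\legm$ has Maslov index equal to a rotation number, hence $0$, while the loop over $h$ has index $d$; and as this cobordism is an orientable Lagrangian surface, its Maslov class is even, so $d$ is even.

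The key local move is the following. Near $p_1$, insert a double stabilization $S_{+-}$ into $\legm$; this creates a zigzag whose ``detour'' strands carry Maslov potential $\mu(p_1)\pm1$. Iterating along a detour strand, $k$ such stabilizations produce near $p_1$ a strand of Maslov potential $\mu(p_1)+s$ for values $s$ whose range grows with $k$ and whose parity is controlled by $k$. Re-routing the foot $p_1$ of $h$ to terminate on such a shifted strand changes the defect to $d\mp s$; since $d$ is even, we may choose the shift --- and hence a suitable $k\geq1$ depending on $d$ --- so that the new defect is $0$. At the same time, enlarge the handle graph by adjoining, for each of the $k$ inserted double stabilizations, the pair of Maslov-$0$ handles supplied by \fullref{lem:maslov-0-stab} (cf.\ \fullref{fig:maslov-0-stab}) whose ambient surgery undoes that stabilization; by \fullref{prop:cob-classical} these account precisely for the $2k$ extra handles forced by the drop $\tb((S_{+-})^k(\legm))=\tb(\legm)-2k$.

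It remains to assemble the pieces. Call the resulting link $(S_{+-})^k(\legm)$ --- whose front has the same crossings as that of $\legm$, since the stabilizations are purely local zigzags --- and let $\graph'$ be the enlarged handle graph. The construction is arranged so that $\graph'$ carries a $0$-graded Maslov potential compatible with every vertex condition, making $(\graph',(S_{+-})^k(\legm))$ a Maslov-$0$ handle graph; surgery on the $2k$ auxiliary handles returns $\legm$ with $h$ in its re-routed (but Legendrian-isotopic) position, and surgery on $h$ then produces $\legp$, so that $\surg(\graph',(S_{+-})^k(\legm))=\legp$. Invoking \fullref{lem:flat-handle-lagr} then shows that the associated Lagrangian cobordism has Maslov number $0$. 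The hard part will be the local front picture near $p_1$: one must thread $h$ through the nested stack of zigzags in a way that does \emph{not} obstruct the auxiliary handles designed to undo those same zigzags, so that the combined surgery returns \emph{exactly} $\legp$ rather than some stabilization of it. This amounts to front-diagram bookkeeping that generalizes \fullref{fig:maslov-0-stab} to a threaded nest of zigzags, and it is there that the even parity of $d$ --- forced by orientability --- is precisely what makes a target defect of $0$ reachable.
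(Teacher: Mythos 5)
Your overall strategy is the one the paper uses: measure the (even) grading defect of the single ordered handle, kill it by inserting double stabilizations of $\leg_-$ locally near a foot of the handle, compensate the Euler-characteristic drop with handles that undo the stabilizations as in \fullref{lem:maslov-0-stab}, and then invoke \fullref{lem:flat-handle-lagr} to get the Maslov-$0$ cobordism. The problem is that the step you yourself flag as ``the hard part''---drawing the handle graph on $(S_{+-})^k(\leg_-)$ near the stabilized region and checking that surgery on \emph{all} of its handles returns exactly $\leg_+$, with a single $0$-graded potential satisfying the vertex conditions at every handle simultaneously---is not bookkeeping to be deferred: it is the entire content of the lemma, and in the paper it is supplied by the explicit local front picture of \fullref{fig:maslov-0-surgery} (with potentials $a$ and $a+2k+1$ across the handle, i.e.\ defect $2k$). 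Relatedly, the ``same crossings'' clause is not automatic from stabilizations being ``purely local zigzags'': the paper justifies it by the Fuchs--Tabachnikov technique \cite{FucTab97:LegTransKnots}, isotoping $(S_{+-})^k(\leg_-)$ so that its front agrees with that of $\leg_-$ outside a prescribed small neighborhood where the wiggles (and the re-routed foot) live. Without the local model, one cannot rule out that threading $h$ through the nested wiggles forces extra cusps or crossings, or that the combined surgery yields a stabilization of $\leg_+$ rather than $\leg_+$ itself---exactly the failure mode you mention but do not exclude.

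A secondary issue is your justification that the defect $d$ is even. The claim that ``the cobordism determined by the original handle has Maslov number $|d|$'' is false when the two feet of the handle lie on different components of $\leg_-$: then no loop passes over the handle, $H_1$ of the cobordism is carried by the boundary circles, and the Maslov number is $0$ regardless of $d$; moreover the Maslov potential may be shifted independently on each component, so in that case $d$ is not even well defined and can simply be normalized away without any stabilization. The case that genuinely needs an argument is when both feet lie on one component; there the correct reasoning is diagrammatic rather than homological: an ordered (orientable) pinch joins oppositely oriented strands, and since a $0$-graded potential changes parity exactly at cusps, where the direction of travel also flips, the potential difference across the handle is odd, so the defect $\mu(u)-\mu(l)-1$ is even (consistent with the $a$ versus $a+2k+1$ labels in the paper's figure). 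Splitting these two cases, and then actually exhibiting the local handle graph (or citing \fullref{fig:maslov-0-surgery}), would complete your argument.
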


\begin{proof}
The proof is contained in \fullref{fig:maslov-0-surgery}.  Using a technique of 
\cite{FucTab97:LegTransKnots}, we may assume that, after a Legendrian isotopy, 
the front diagrams of $(S_{+-})^k (\leg_-)$ and $\leg_-$ are identical outside 
a small neighborhood located in a place of our choosing in which the 
stabilizations are performed.  In particular, after that isotopy, the front 
diagrams of $(S_{+-})^k (\leg_-)$ and $\leg_-$ will have the same number of 
crossings.
\begin{figure}[!htb]
\labellist
\small\hair 2pt
 \pinlabel {$a$} [tl] at 20 15
 \pinlabel {$a$} [l] at 197 9
 \pinlabel {$a+2k+1$} [l] at 197 104
 \pinlabel {$a$} [r] at 178 147
 \pinlabel {$a+2k+1$} [r] at 178 242
% \pinlabel {$a+2k$} [r] at 189 226
 \pinlabel {$a$} [r] at 176 294
 \pinlabel {$a+2k+1$} [r] at 176 317
\endlabellist
  \includegraphics{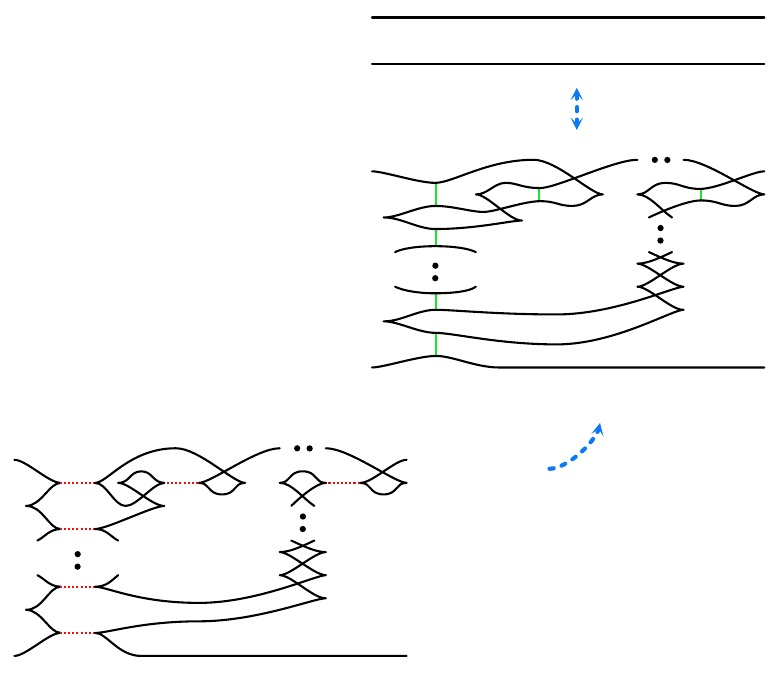}
	\caption{A handle graph on $S_{+-}^k(\leg_-)$ that yields $\leg_+$ under a Maslov-$0$ surgery, where $\leg_+$ is obtained from $\leg_-$ by a single orientable $1$-handle attachment.}
	\label{fig:maslov-0-surgery}
\end{figure}
\end{proof}

\begin{proof}[Proof of \fullref{thm:maslov-0}]  As mentioned above, the proof 
  follows the same steps as the proof in \cite[Section~4]{SabVelWon21:MaxLeg}.  
  We reduce each of $\leg$ and $\leg'$ to Maslov-$0$ handle graphs on 
  (stabilized) unknots using the following steps:

\begin{description}
\item[Elimination of negative crossings]  A negative crossing is eliminated by 
  a single pinch move as in \cite[Figure~17]{SabVelWon21:MaxLeg}; we may assume 
  the pinch is ordered by choosing to pinch on the appropriate side of the 
  crossing.  Use \fullref{lem:maslov-0-surgery} to replace the cobordism 
  arising from the pinch with a Maslov-$0$ cobordism whose negative end has the 
  same crossings as the original negative end.

\item[Elimination of positive crossings] A leftmost positive crossing is 
  eliminated by the more elaborate procedure depicted in 
  \cite[Figure~18]{SabVelWon21:MaxLeg}.  Only the last step needs to be 
  adjusted, first by raising the Maslov potential of the top strand of the 
  final two pinches using the same combination of Reidemeister-I moves and 
  pinches in earlier stages of the procedure, and then by replacing the final 
  two pinches using \fullref{lem:maslov-0-surgery}.

\item[Merging of components] This step may be adjusted by simply adding 
  appropriate Reidemeister-I moves to the procedure in 
  \cite[Figure~19]{SabVelWon21:MaxLeg} to ensure that the pinches are all 
  Maslov-$0$ operations.
\end{description}

The unknots are then stabilized further using \fullref{lem:maslov-0-stab} until 
they have the same Thurston--Bennequin numbers (they already have rotation 
number $0$ by construction).  The resulting unknots are Legendrian isotopic; we 
call the result $\leg_-$.  By dragging the handle graphs along with the isotopy 
and taking a union of the handles, we obtain $(\graph_-,\leg_-)$. By 
\fullref{lem:flat-handle-lagr}, the partial surgery procedure in 
\cite{SabVelWon21:MaxLeg} results in the desired Maslov-$0$ Lagrangian 
cobordisms.
\end{proof}

%%%%%%%%%%

%\item[Elimination of a Negative Crossing] Given a Legendrian link $\leg_+$ with negative crossing, a Maslov $0$ handle graph on a Legendrian link $\leg_-$ with one fewer negative crossing that yields $\leg_+$. Such a handle graph appears in Figure~\ref{fig:neg-xing}.
%
%
%\item[Elimination of a Leftmost Positive Crossing] Given a Legendrian link $\leg_+$ whose leftmost crossing is positive, a Maslov $0$ handle graph on a Legendrian link $\leg_-$ with one fewer positive crossing that yields $\leg_+$.  Only the last step of this process needs to be adjusted from \cite[Lemma 4.3]{SabVelWon21:MaxLeg}; this adjustment is pictured in Figure~\ref{fig:pos-xing}.

%\item[Merging of Components] Given a Legendrian link $\leg_+$ with two components that are disjoint in the front diagram, a Maslov $0$ handle graph on $\leg_-$, where $\leg_-$ has a component that is the (topological) connected sum of the two given components of $\leg_+$, that yields $\leg_+$.  This step may be adjusted by simply adding appropriate Reidemeister I moves to the procedure in Figure 16 of \cite{SabVelWon21:MaxLeg}.

% !TEX root = lqc.tex

% ****************************************
% "Augmented" LQC structure
% ****************************************

\section{Relationship to non-classical invariants}
\label{sec:non-classical}

In this section, we discuss the relationship between non-classical invariants 
and Lagrangian zigzag cobordism.  The section begins with a 
review of Legendrian contact homology in \fullref{ssec:lch-background}, 
proceeds to apply it to zigzag cobordisms in \fullref{ssec:lch-zz}, and ends 
with a proof of a setwise Poincar\'e--Chekanov polynomial geography theorem 
(\fullref{thm:poly-geography}).

% ********************
\subsection{Background on Legendrian contact homology}
\label{ssec:lch-background}

Lagrangian cobordisms are obstructed by the Legendrian contact homology 
differential graded algebra (LCH DGA), whose homology is a non-classical 
invariant of Legendrian knots.  In this subsection, we briefly set notation and 
review key results.  See the survey \cite{EtnNg22:LCHSurvey} for a more 
comprehensive introduction.

The LCH DGA $(\alg_{\leg},\partial_{\leg})$ of the Legendrian knot $\leg$ in 
the standard contact $\R^3$ was first defined by Chekanov \cite{Che02:DGA}; see 
also \cite{Eli98:ContactInv}. The algebra (over the field $\FF_2$, for 
simplicity) is generated by the Reeb chords of $\leg$ and graded by a 
Conley--Zehnder index.  The differential counts certain immersed disks in 
$\R^2$ with boundary on the Lagrangian projection of $\leg$; these disks 
correspond to holomorphic disks in the symplectization $\R \times \R^3$ 
\cite{EliGivHof00:SFTIntro, EtnNgSab02:LCHHol}.  

While it can be difficult to distinguish knots from presentations of their DGAs, 
Chekanov's linearization procedure yields a more computable set of invariants.  
Given a DGA $(\alg_\leg, \partial_\leg)$, its \dfn{augmentations} are DGA maps 
$\aug \colon (A_\leg, \partial_\leg)\to(\FF_2,0)$. Each augmentation $\aug$ 
induces a differential $\partial^\aug_\leg$ on the $\FF_2$ vector space 
$A_\leg$ generated by the Reeb chords of $\leg$.  The homology of the chain 
complex $(A_\leg,\partial^\aug_\leg)$ is called the \dfn{linearized Legendrian 
  contact homology $\LCH_*(\leg, \aug)$ with respect to $\aug$}.  It is 
convenient to record the dimensions of $\LCH_*(\leg, \aug)$ in a 
\dfn{Poincar\'e--Chekanov polynomial} \[p_{\leg, \aug}(t) = 
  \sum_{i=-\infty}^\infty \dim \LCH_i(\leg, \aug)\,t^i.\]
A duality result for Legendrian contact homology \cite{Sab06:LCHDuality} 
implies that every Poincar\'e--Chekanov polynomial is of the form 
\begin{equation}
  \label{eq:duality}
  p_{\leg, \aug}(t) = t + q(t) + q(t^{-1})
\end{equation} for some polynomial $q$. The set $\poly_\leg$ of 
Poincar\'e--Chekanov polynomials over all possible augmentations is an 
invariant of the Legendrian knot
$\leg$.  For example, Chekanov used this fact to show that the twist knots 
$\leg$ and $\leg'$ in \fullref{fig:chv-ex} are not Legendrian isotopic despite 
having the same classical invariants since $\poly_\leg = \{2+t\}$ while 
$\poly_{\leg'} = \{t^{-2} + t + t^2\}$ \cite{Che02:DGA}.
\begin{figure}[!htb]
 \includegraphics{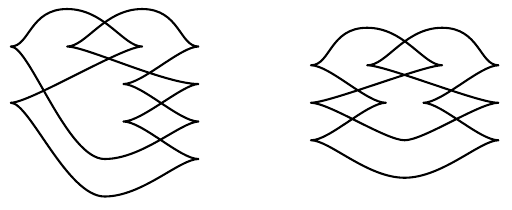}
 \caption{Two non-isotopic Legendrian representatives of $m (5_2)$.}
 %\caption{The Legendrian twist knots depicted are not Legendrian isotopic 
 %despite having the same classical invariants.}
 \label{fig:chv-ex}
\end{figure}

As hinted above, Legendrian contact homology is functorial under Lagrangian 
cobordism.  More precisely, an exact, Maslov-$0$ Lagrangian cobordism $L$ from 
$\leg_-$ to $\leg_+$ induces a DGA map $\phi_L: 
(\alg_{\leg_+},\partial_{\leg_+}) \to (\alg_{\leg_-},\partial_{\leg_-})$ 
\cite{EkhHonKal16:LagCob}.  Such a DGA map can be used to pull back 
augmentations for $\leg_-$ to augmentations for $\leg_+$ via $\phi^*_L \aug_- = 
\aug_- \circ \phi_L$, and hence linearized Legendrian contact homology can be 
used to obstruct Lagrangian cobordisms.  We recall this obstruction from Pan 
\cite{Pan17:LagCobAug}:

\begin{theorem}[{\cite[Corollary~1.4]{Pan17:LagCobAug}}]
  \label{thm:lch-obstruction}
If there is an exact, Maslov-$0$ Lagrangian cobordism $L$ from $\leg_-$ to $\leg_+$, and if $\epsilon_-$ is an augmentation of $(\alg_{\leg_-},\partial_{\leg_-})$, then
		\[
      \LCH_*(\leg_+,\phi_L^*\aug_-) \cong 
      \LCH_*(\leg_-,\aug_-)\oplus\FF^{-\chi(L)}[0],\]
		where $\FF^{-\chi(L)}[0]$ denotes the vector space $\FF^{-\chi(L)}$ in degree 0.  In particular, if $L$ is a concordance, then the associated linearized Legendrian contact homologies are isomorphic. 
\end{theorem}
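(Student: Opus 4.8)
The plan is to deduce the theorem from three inputs: functoriality of the Chekanov--Eliashberg DGA under exact Maslov-$0$ cobordisms, a Floer-theoretic long exact sequence tying the two linearized homologies to the singular topology of $L$, and an elementary homology computation for the surface $L$. First, by \cite{EkhHonKal16:LagCob} the cobordism $L$ induces a unital DGA map $\phi_L\colon(\alg_{\leg_+},\partial_{\leg_+})\to(\alg_{\leg_-},\partial_{\leg_-})$, so that $\aug_+:=\phi_L^*\aug_-=\aug_-\comp\phi_L$ is a genuine augmentation of the DGA of $\leg_+$ and, after passing to the augmented complexes, $\phi_L$ induces a chain map, hence a map $\phi_L^*\colon\LCH_*(\leg_+,\aug_+)\to\LCH_*(\leg_-,\aug_-)$. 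The key input --- ultimately counts of pseudoholomorphic disks with boundary on $L$, of the kind underlying the cobordism maps of \cite{EkhHonKal16:LagCob} and \cite{Pan17:LagCobAug} --- is the existence of a long exact sequence
\[
  \cdots\longrightarrow V_k\longrightarrow\LCH_k(\leg_+,\aug_+)\xrightarrow{\ \phi_L^*\ }\LCH_k(\leg_-,\aug_-)\longrightarrow V_{k-1}\longrightarrow\cdots
\]
in which the groups $V_*$ depend only on the topology of $L$; concretely $V_*\cong H_{*+1}(L,\leg_+;\FF_2)$, the degree shift being the one forced by the grading conventions for a $3$-dimensional contact manifold. (When $\leg_-=\eset$ this is Ekholm's Seidel-type isomorphism $\LCH_*(\leg_+,\aug_L)\cong H_{*+1}(L,\partial L;\FF_2)$ for fillings; here we are in the complementary regime $\leg_-\neq\eset$, which is precisely the regime $-\chi(L)\geq0$ in which the stated formula makes sense.)

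Next I would compute $V_*$. Since $L$ is a connected compact orientable surface with $\partial L=\leg_-\disjunion\leg_+$, both pieces nonempty --- so $L$ has exactly two boundary circles when $\leg_\pm$ are knots --- we have $H_0(L,\leg_+)=0$ by connectedness and $H_2(L,\leg_+)=0$ because $[\leg_+]$ is non-trivial in $H_1(L)$; hence $V_*$ is supported in degree $0$ with $\dim_{\FF_2}V_0=-\chi(L,\leg_+)=-\chi(L)$, using $\chi(\leg_+)=0$. Substituting into the long exact sequence makes $\phi_L^*$ an isomorphism in every degree $k\neq 0,1$ and leaves the four-term exact sequence
\begin{multline*}
  0\to\LCH_1(\leg_+,\aug_+)\xrightarrow{\phi_L^*}\LCH_1(\leg_-,\aug_-)\xrightarrow{\,\delta\,}\FF_2^{-\chi(L)}\\
  \to\LCH_0(\leg_+,\aug_+)\xrightarrow{\phi_L^*}\LCH_0(\leg_-,\aug_-)\to 0.
\end{multline*}

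It remains to show that the connecting map $\delta$ vanishes. From the part of the long exact sequence already discussed, $\phi_L^*$ is an isomorphism in degree $-1$, so $\dim_{\FF_2}\LCH_{-1}(\leg_+,\aug_+)=\dim_{\FF_2}\LCH_{-1}(\leg_-,\aug_-)$; on the other hand, the duality relation \eqref{eq:duality}, applied separately to $(\leg_+,\aug_+)$ and to $(\leg_-,\aug_-)$, gives $\dim\LCH_1=\dim\LCH_{-1}+1$ in each case. Hence $\dim_{\FF_2}\LCH_1(\leg_+,\aug_+)=\dim_{\FF_2}\LCH_1(\leg_-,\aug_-)$, and comparing this with the four-term exact sequence forces $\delta=0$ (and $\phi_L^*$ to be an isomorphism in degree $1$). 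The sequence then collapses to a short exact sequence $0\to\FF_2^{-\chi(L)}\to\LCH_0(\leg_+,\aug_+)\to\LCH_0(\leg_-,\aug_-)\to 0$, which splits over the field; assembling degrees gives $\LCH_*(\leg_+,\phi_L^*\aug_-)\cong\LCH_*(\leg_-,\aug_-)\oplus\FF_2^{-\chi(L)}[0]$, and the concordance clause is the special case $\chi(L)=0$.

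The main obstacle is the first step. Constructing a suitable Floer complex for a \emph{cobordism} (rather than a filling), establishing its acyclicity, and pinning down the grading shifts so that the correction term $V_*$ lands precisely in degree $0$ constitute the analytically substantial part; this is what one imports from \cite{EkhHonKal16:LagCob} and \cite{Pan17:LagCobAug}. Everything afterwards is bookkeeping with the long exact sequence and the duality relation \eqref{eq:duality}.
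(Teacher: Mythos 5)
This theorem is not proved in the paper: it is quoted wholesale from Pan's work \cite{Pan17:LagCobAug} (her Corollary~1.4), so there is no internal argument to compare yours against. Your reconstruction is essentially the standard proof of that corollary and is sound: the long exact sequence with third term $H_{*+1}(L,\leg_+;\FF_2)$ (correctly calibrated against the filling case), the computation that this relative homology is concentrated so as to contribute $\FF_2^{-\chi(L)}$ in degree $0$ (your vanishing of $H_2(L,\leg_+)$ is right, since the only relation among boundary classes is that their total sum vanishes and $\leg_-\neq\eset$), and the use of Sabloff duality in degrees $\pm1$ together with finite-dimensionality to kill the connecting map and split the resulting short exact sequence over the field. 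The one point to repair is attribution of the key analytic input: the exact sequence for a genuine cobordism (rather than a filling) is not in Ekholm--Honda--K\'alm\'an, and citing Pan for it is close to circular since the statement being proved \emph{is} Pan's corollary; it is the Floer theory for Lagrangian cobordisms ("Cthulhu complex") of Chantraine, Dimitroglou Rizell, Ghiggini, and Golovko, which is also what Pan's own proof rests on. With that reference in place, your argument is a faithful account of how the result is established in the literature, whereas the paper itself simply cites it.
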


% ********************
\subsection{Linearized LCH and zigzag cobordisms}
\label{ssec:lch-zz}

\fullref{thm:lch-obstruction} shows that if there is a Lagrangian concordance 
$L$ from $\leg_-$ to $\leg_+$, then $\poly_{\leg_-} \subset \poly_{\leg_+}$.  
This shows, for example, that not only are the twist knots in 
\fullref{fig:chv-ex} not Legendrian isotopic, but there is no Lagrangian 
concordance between them in either direction.

Even so, the structure of the linearized LCH invariant does not yield an 
obstruction to Maslov-$0$ zigzag cobordism.  To see why, suppose that 
$\legjoin(\leg_1,\leg_2)$ is a Lagrangian cospan with top Legendrian $\leg_+$.  
While the sets of augmentations of $\leg_i$, $i=1,2$, both pull back to 
augmentations of $\leg_+$, these sets may be disjoint.  In particular, we might 
have that the subsets $\poly_{\leg_1}, \poly_{\leg_2} \subset \poly_{\leg_+}$ 
are disjoint, and hence we cannot fruitfully use comparisons between the sets 
$\poly_{\leg_i}$ to obstruct a Lagrangian zigzag cobordism.

\begin{remark}
  This issue of not being able to guarantee that the sets of pulled-back 
  invariants match up---and hence being unable to use the sets of invariants to 
  obstruct Lagrangian zigzag cobordisms---is common to linearized LCH, 
  generating family homology \cite{SabTra13:LagCobObstructions, 
    Tra01:LegGenFunc}, and sheaf invariants \cite{Li22:LagCobSheaves, 
    SheTreZas17:LegKnotsSheaves}.
  %Thus, all of these types of invariants share the issue discussed above.
  
  The use of Legendrian invariants $\lossh (\leg)$ in knot Heegaard Floer 
  homology \cite{BalLidWon22:LagCobHFK,GolJuh19:LOSSConc} or knot monopole 
  Floer homology \cite{BalSiv18:KHMLeg} suffers from an issue that is perhaps 
  similar in spirit, but differently manifested. In, say, Heegaard Floer 
  theory, one has homomorphisms $\mathcal{F}_i \colon \HFKh (\leg_+) \to \HFKh 
  (\leg_i)$ such that $\mathcal{F}_i (\lossh (\leg_+)) = \lossh (\leg_i)$.  But
  knowing only $\lossh (\leg_i)$---even if one vanishes and the other does 
  not---one is never able to disprove the existence of \emph{some} unknown, 
  non-vanishing $\lossh (\leg_+)$ and corresponding homomorphisms 
  $\mathcal{F}_i$ satisfying the above, which would be necessary to obstruct 
  all possible zigzag cobordisms between $\leg_1$ and $\leg_2$.
\end{remark}

\begin{openquestion}
  \label{openq:leg-botany}
\fullref{fig:chv-ex} showcases an instance of the Legendrian botany question:
How many different Legendrian knots (up to isotopy) are there for a fixed set 
of classical invariants?
Since the classical invariants are preserved by Lagrangian zigzag concordance, 
it is reasonable to ask whether the two knots in \fullref{fig:chv-ex} are 
zigzag concordant.  Pushing further, one could restrict $\LagGenus$ to the set 
of Legendrians with common smooth knot class and classical invariants to 
\emph{quantify} the botany question: What is the diameter of such a set of 
Legendrian knots? An effective nonclassical invariant of Lagrangian zigzag 
concordance seems necessary to answer this question.
\end{openquestion}

One possible way to deal with the structural failure of linearized LCH to yield 
an obstruction to zigzag cobordism is as follows:
One could refine the Lagrangian zigzag cobordism relation to take pairs $(\leg, 
\aug)$ of Legendrians and augmentations  as its objects, while insisting on 
matching pullbacks of augmentations at the top in a Lagrangian cospan.  More 
precisely, an \dfn{augmented Lagrangian cospan} $\legjoin(\aug,\aug')$ consists 
of an augmented Legendrian link $(\leg_+, \aug_+)$ and two exact, connected 
Maslov-$0$ Lagrangian cobordisms $L$ from $\leg$ to $\leg_+$ and $L'$ from  
$\leg'$ to $\leg_+$ so that $\aug \circ \phi_L = \aug_+ = \aug' \circ 
\phi_{L'}$; the definitions of the augmented zigzag cobordism relation $\qcobe$ 
and concordance relation $\qconce$ then follow the same structure as 
\fullref{defn:lqc-relation}.  Note that \fullref{thm:lch-obstruction} implies 
that linearized LCH is an invariant of augmented zigzag concordance and that 
components of linearized LCH in nonzero grading are invariants of augmented 
zigzag cobordism. This refinement shifts the study of Lagrangian zigzag 
concordance from Legendrians to augmentations.  For example, we prove the 
following:

\begin{proposition}
	For any $n \in \N$, there exists a Legendrian $\leg_n$ with augmentations $\aug_1, \ldots \aug_n$ so that $(\leg_n,\aug_i) \not \qcobe (\leg_n,\aug_j)$ for $i \neq j$.
\end{proposition}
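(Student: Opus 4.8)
The statement asks to produce, for any $n$, a Legendrian knot $\leg_n$ carrying augmentations $\aug_1, \ldots, \aug_n$ that are pairwise non-equivalent under the augmented zigzag cobordism relation $\qcobe$. The key structural fact from \fullref{thm:lch-obstruction} is that the \emph{non-zero-graded} part of linearized LCH is an invariant of augmented zigzag cobordism: if $(\leg, \aug) \qcobe (\leg', \aug')$, then passing through each cospan only changes linearized homology by adding free summands concentrated in degree $0$, so $\LCH_i(\leg, \aug) \cong \LCH_i(\leg', \aug')$ for all $i \neq 0$. Thus it suffices to exhibit a single Legendrian knot with $n$ augmentations whose Poincaré--Chekanov polynomials have pairwise distinct coefficients in \emph{some} non-zero degree.

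\medskip

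The plan is to build $\leg_n$ by a connected-sum (or front-concatenation) construction out of building blocks each of which contributes a controllable nonzero-degree term. A convenient starting point is the Chekanov knot or the twist-knot family: for instance, there are Legendrian representatives of $m(5_2)$-type twist knots whose $\poly$ includes a polynomial with a $t^2$ term. More flexibly, one can invoke the realization results for Poincaré--Chekanov polynomials: by Melvin--Shrestha \cite{MelShr05:ChePolys} (or the fuller \fullref{thm:poly-geography} proved later in the paper), there are Legendrian knots admitting augmentations with prescribed Poincaré--Chekanov polynomials of the form $q(t) + q(t^{-1}) + t$. Choosing $q_i(t) = t^{k_i}$ for distinct positive integers $k_1 < \cdots < k_n$ produces, on $n$ separate knots $K_1, \dots, K_n$, augmentations with polynomials $t^{k_i} + t^{-k_i} + t$. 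Then I would take $\leg_n$ to be a Legendrian connected sum $K_1 \connsum \cdots \connsum K_n$ and use the behavior of augmentations and linearized LCH under connected sum (the linearized complex of a connected sum, with a ``pair'' of augmentations, has homology that is, up to a degree shift/summand, the tensor or direct sum of the pieces --- see e.g.\ \cite{Che02:DGA, Sab06:LCHDuality} and work on DGAs of connected sums) to produce augmentations $\aug_1, \dots, \aug_n$ on $\leg_n$ that ``select'' the interesting augmentation on $K_i$ and a trivial one elsewhere. This yields Poincaré--Chekanov polynomials $p_{\leg_n, \aug_i}(t)$ that differ from each other in degrees $\pm k_i$, hence differ in a nonzero degree, so no two are related by $\qcobe$.

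\medskip

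An alternative, and probably cleaner, route avoids connected sums entirely: simply take $\leg_n$ to be a single Legendrian knot realizing the \emph{set} $\{t^{k_1} + t^{-k_1} + t, \dots, t^{k_n} + t^{-k_n} + t\} \subset \poly_{\leg_n}$, which is exactly what \fullref{thm:poly-geography} provides (with the even constants $c_i$ appearing there, which are irrelevant since they do not affect the nonzero degrees). Then $\leg_n$ already carries augmentations $\aug_1, \dots, \aug_n$ with these polynomials, and since $\dim \LCH_{k_i}(\leg_n, \aug_i) = 1$ while $\dim \LCH_{k_i}(\leg_n, \aug_j) = 0$ for $j$ with $k_j \neq k_i$ (and $k_i$ large enough that the constant $c_i$ terms, living in degree $0$, cannot interfere), the nonzero-graded linearized LCH distinguishes them. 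By \fullref{thm:lch-obstruction} applied along any chain of augmented Lagrangian cospans, this rules out $(\leg_n, \aug_i) \qcobe (\leg_n, \aug_j)$ for $i \neq j$.

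\medskip

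The main obstacle is bookkeeping the degree-$0$ ambiguity: $\qcobe$-equivalence is only detected by nonzero-graded linearized LCH, so I must ensure the distinguishing features of the $\aug_i$ genuinely live in nonzero degrees and cannot be absorbed into (or washed out by) the degree-$0$ free summands $\FF^{-\chi(L)}[0]$ introduced by the cobordisms in a zigzag. Choosing the polynomials $q_i$ to be monomials $t^{k_i}$ with the $k_i$ distinct and positive handles this cleanly: the coefficient of $t^{k_i}$ in $p_{\leg_n, \aug_i}$ is $1$ and in $p_{\leg_n, \aug_j}$ is $0$, an invariant of $\qcobe$. The only remaining care is to cite the correct realization input --- either \fullref{thm:poly-geography} directly (if one is comfortable with a forward reference within the same paper) or the Melvin--Shrestha single-polynomial result \cite{MelShr05:ChePolys} applied $n$ times together with a connected-sum argument to collect all $n$ augmentations on one knot.
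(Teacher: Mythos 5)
Your second (``cleaner'') route is correct, and it rests on exactly the same key observation as the paper --- that by \fullref{thm:lch-obstruction} the nonzero-graded part of linearized LCH is an invariant of the augmented zigzag cobordism relation $\qcobe$ --- but the construction of the examples is genuinely different. The paper does not invoke \fullref{thm:poly-geography} or \fullref{thm:maslov-0} at all: following Sivek \cite{Siv11:BorderedDGA}, it takes $\leg_n$ to be the $n$-fold iterated $\tb$-twisted Whitehead double of the maximal unknot, which comes equipped with $n$ augmentations whose Poincar\'e--Chekanov polynomials satisfy an explicit recursion ($p_1(t)=t+2$, $p_k(t)=t+2+(t+2+t^{-1})(p_{k-1}(t)-t)$) and hence are pairwise distinct in nonzero degrees; the conclusion then follows immediately from \fullref{thm:lch-obstruction}. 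Your route instead feeds monomials $q_i(t)=t^{k_i}$ (with distinct $k_i\geq 2$) into the setwise realization theorem, so that one knot carries augmentations whose polynomials differ in degrees $\pm k_i$, and the degree-$0$ constants $c_i$ are harmless. This is logically sound and not circular (the proof of \fullref{thm:poly-geography} does not use the proposition), but it is a forward reference that imports the full Maslov-$0$ cospan machinery of \fullref{thm:maslov-0}, whereas the paper's argument is self-contained modulo Sivek's computation. Your first route, via connected sums and the behavior of augmentations and linearized LCH under Legendrian connected sum, is the weak point of the write-up: that behavior is not established in this paper and you cite no precise statement for it, so as written it is an appeal to folklore; if you want to keep that variant you should either supply a reference with the exact connected-sum formula for augmentations and linearized homology or drop it in favor of the second route.
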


\begin{proof}
  Following the proof of \cite[Corollary~5.2]{Siv11:BorderedDGA}, let $\leg_0$ 
  be the maximal Legendrian unknot, and let $\leg_n$ be the Legendrian (i.e.\ 
  $\tb$-twisted) Whitehead double of $\leg_{n-1}$.  Sivek showed that $\leg_n$ 
  has augmentations $\{\aug_1, \ldots, \aug_n\}$ with Poincar\'e-Chekanov 
  polynomials $\{p_1, \ldots, p_n\}$ that satisfy $p_1(t) = t+2$ and
  \begin{equation} \label{eq:whitehead-recur}
    p_k(t) = t+2 + (t+2+t^{-1})(p_{k-1}(t)-t).
  \end{equation}

  Using \eqref{eq:whitehead-recur}, we may inductively compute that, for $n 
  \geq 2$, $\deg p_n = n-1$.  The proof now follows from 
  \fullref{thm:lch-obstruction}, as noted above.
\end{proof}

%A second approach to the structural issues of linearized LCH is to use
In a different direction, we may also \emph{exploit} the interaction between 
lineared LCH and a Lagrangian cospan, not to obstruct zigzag cobordisms, but to 
prove other interesting statements.
In particular, we may use
the fact that the Legendrian $\leg_+$ at the top of a Lagrangian cospan 
$\legjoin(\leg_1,\leg_2)$ pulls back linearized LCH information from both 
$\leg_1$ and $\leg_2$
%In particular, we can
to
construct Legendrian knots with (almost) arbitrary sets of Poincar\'e--Chekanov 
polynomials.  The procedure is made precise by \fullref{thm:poly-geography}, 
which tells us that for any set of Laurent polynomials compatible with duality 
as in \eqref{eq:duality}, there is a Legendrian knot that contains those 
polynomials---up to a correction in grading $0$---in its set of 
Poincar\'e--Chekanov polynomials.

\begin{proof}[Proof of \fullref{thm:poly-geography}]
  By \cite[Theorem 1.2]{MelShr05:ChePolys}, for each $i \in \{1, \ldots, n\}$, 
  there exists a Legendrian knot $\leg_i$ and an augmentation $\aug_i$ of its 
  LCH DGA so that $p_{\leg_i, \aug_i}(t) = p_i(t)+p_i(t^{-1})+t$.  
  \fullref{thm:maslov-0} and induction yield a Legendrian knot $\leg_+$ and 
  Maslov-$0$ Lagrangian cobordisms $\leg_i \prec_{L_i} \leg_+$. Let $c_i = 
  \chi(L_i)$. Finally, \fullref{thm:lch-obstruction} gives augmentations 
  $\aug_{+,i}$ of $\leg_+$ so that \[p_{\leg_+, \aug_{+,i}}(t) = p_{\leg_i, 
      \aug_i}(t) + \chi(L_i) =  p_i(t)+p_i(t^{-1})+t+ c_i,\]
which proves the theorem.
\end{proof}

\begin{remark}
  The constants $c_i$ are not independent. In fact, once $c_1$ is known, the 
  other constants are determined.  The idea is that the formula $\tb(\leg_+) = 
  p_{\leg_i, \aug_i}(-1) + c_i$ holds for any $i$.  Knowing $c_1$ determines 
  $\tb(\leg_+)$, and then, in turn, $\tb(\leg_+)$ determines $c_2, \ldots, 
  c_n$.
\end{remark}

% ****************************************
% Bibliography setup
% ****************************************

\bibliographystyle{mwamsalphack}
\bibliography{bibliography}

\providecommand{\bysame}{\leavevmode\hbox to3em{\hrulefill}\thinspace}
\providecommand{\MR}{\relax\ifhmode\unskip\space\fi MR }
% \MRhref is called by the amsart/book/proc definition of \MR.
\providecommand{\MRhref}[2]{%
  \href{http://www.ams.org/mathscinet-getitem?mr=#1}{#2}
}
\providecommand{\href}[2]{#2}
\begin{thebibliography}{BLW22}

\bibitem[AB20]{AnBae20:LegGraphsDGA}
Byung~Hee An and Youngjin Bae, \emph{A {C}hekanov-{E}liashberg algebra for
  {L}egendrian graphs}, J. Topol. \textbf{13} (2020), no.~2, 777--869.
  \MR{4092780}

\bibitem[ABK22]{AnBaeKal22:LegGraphsDGA}
Byung~Hee An, Youngjin Bae, and Tam\'{a}s K\'{a}lm\'{a}n, \emph{Ruling
  invariants for {L}egendrian graphs}, J. Symplectic Geom. \textbf{20} (2022),
  no.~1, 49--97. \MR{4518248}

\bibitem[Ago22]{Ago22:RibConcPartialOrder}
Ian Agol, \emph{Ribbon concordance of knots is a partial ordering}, Comm. Amer.
  Math. Soc. \textbf{2} (2022), 374--379. \MR{4520779}

\bibitem[BG22]{BouGal22:GeoBilinLCH}
Fr{\'e}d{\'e}ric Bourgeois and Damien Galant, \emph{Geography of bilinearized
  legendrian contact homology}, version 4, 2022,
  \href{http://arxiv.org/abs/1905.12037}{\texttt{arXiv:1905.12037}}.

\bibitem[BLW22]{BalLidWon22:LagCobHFK}
John~A. Baldwin, Tye Lidman, and C.-M.~Michael Wong, \emph{Lagrangian
  cobordisms and {L}egendrian invariants in knot {F}loer homology}, Michigan
  Math. J. \textbf{71} (2022), no.~1, 145--175. \MR{4389674}

\bibitem[BS18]{BalSiv18:KHMLeg}
John~A. Baldwin and Steven Sivek, \emph{Invariants of {L}egendrian and
  transverse knots in monopole knot homology}, J. Symplectic Geom. \textbf{16}
  (2018), no.~4, 959--1000. \MR{3917725}

\bibitem[BST15]{BouSabTra15:LagCobGF}
Fr\'{e}d\'{e}ric Bourgeois, Joshua~M. Sabloff, and Lisa Traynor,
  \emph{Lagrangian cobordisms via generating families: construction and
  geography}, Algebr. Geom. Topol. \textbf{15} (2015), no.~4, 2439--2477.
  \MR{3402346}

\bibitem[CG22]{CasGao22:InfLag}
Roger Casals and Honghao Gao, \emph{Infinitely many {L}agrangian fillings},
  Ann. of Math. (2) \textbf{195} (2022), no.~1, 207--249. \MR{4358415}

\bibitem[CH18]{CocHar18:ConcGeom}
Tim Cochran and Shelly Harvey, \emph{The geometry of the knot concordance
  space}, Algebr. Geom. Topol. \textbf{18} (2018), no.~5, 2509--2540.
  \MR{3848393}

\bibitem[Cha10]{Cha10:LagConc}
Baptiste Chantraine, \emph{Lagrangian concordance of {L}egendrian knots},
  Algebr. Geom. Topol. \textbf{10} (2010), no.~1, 63--85. \MR{2580429}

\bibitem[Cha15]{Cha15:LagConcNotSym}
\bysame, \emph{Lagrangian concordance is not a symmetric relation}, Quantum
  Topol. \textbf{6} (2015), no.~3, 451--474. \MR{3392961}

\bibitem[Che02]{Che02:DGA}
Yuri Chekanov, \emph{Differential algebra of {L}egendrian links}, Invent. Math.
  \textbf{150} (2002), no.~3, 441--483. \MR{1946550}

\bibitem[CN22]{CasNg22:InfLag}
Roger Casals and Lenhard Ng, \emph{Braid loops with infinite monodromy on the
  {L}egendrian contact {DGA}}, preprint, version 2, 2022,
  \href{http://arxiv.org/abs/2101.02318}{\texttt{arXiv:2101.02318}}.

\bibitem[CNS16]{CorNgSiv16:LagConcObstructions}
Christopher Cornwell, Lenhard Ng, and Steven Sivek, \emph{Obstructions to
  {L}agrangian concordance}, Algebr. Geom. Topol. \textbf{16} (2016), no.~2,
  797--824. \MR{3493408}

\bibitem[Dim16]{Dim16:LegAmbSurg}
Georgios Dimitroglou~Rizell, \emph{Legendrian ambient surgery and {L}egendrian
  contact homology}, J. Symplectic Geom. \textbf{14} (2016), no.~3, 811--901.
  \MR{3548486}

\bibitem[DP13]{DynPra13:MinGridMaxTB}
I.~A. Dynnikov and M.~V. Prasolov, \emph{Bypasses for rectangular diagrams. {A}
  proof of the {J}ones conjecture and related questions}, Trans. Moscow Math.
  Soc. \textbf{74} (2013), 97--144. \MR{3235791}

\bibitem[EGH00]{EliGivHof00:SFTIntro}
Y.~Eliashberg, A.~Givental, and H.~Hofer, \emph{Introduction to symplectic
  field theory}, Visions in mathematics: {T}owards 2000, Geom. Funct. Anal.
  {\bf {S}pecial {V}ol. {II}}, Birkh\"{a}user Verlag, Basel, 2000, Proceedings
  of the meeting held at {T}el {A}viv {U}niversity, {T}el {A}viv, {A}ugust
  25--{S}eptember 3, 1999, pp.~560--673. \MR{1826267}

\bibitem[EH03]{EtnHon03:LegConnSum}
John~B. Etnyre and Ko~Honda, \emph{On connected sums and {L}egendrian knots},
  Adv. Math. \textbf{179} (2003), no.~1, 59--74. \MR{2004728}

\bibitem[EHK16]{EkhHonKal16:LagCob}
Tobias Ekholm, Ko~Honda, and Tam\'{a}s K\'{a}lm\'{a}n, \emph{Legendrian knots
  and exact {L}agrangian cobordisms}, J. Eur. Math. Soc. (JEMS) \textbf{18}
  (2016), no.~11, 2627--2689. \MR{3562353}

\bibitem[Eli98]{Eli98:ContactInv}
Yakov Eliashberg, \emph{Invariants in contact topology}, Proceedings of the
  {I}nternational {C}ongress of {M}athematicians, {D}oc. {M}ath. {\bf {E}xtra
  {V}ol. {II}}, Deutsche Math. Ver., 1998, pp.~327--338. \MR{1648083}

\bibitem[EN22]{EtnNg22:LCHSurvey}
John~B. Etnyre and Lenhard~L. Ng, \emph{Legendrian contact homology in
  {$\mathbb{R}^3$}}, Surveys in 3-manifold topology and geometry, Surv. Differ.
  Geom., vol.~25, Int. Press, Boston, MA, 2020 (2022), pp.~103--161.
  \MR{4479751}

\bibitem[ENS02]{EtnNgSab02:LCHHol}
John~B. Etnyre, Lenhard~L. Ng, and Joshua~M. Sabloff, \emph{Invariants of
  {L}egendrian knots and coherent orientations}, J. Symplectic Geom. \textbf{1}
  (2002), no.~2, 321--367. \MR{1959585}

\bibitem[Etn05]{Etn05:LegTransSurvey}
John~B. Etnyre, \emph{Legendrian and transversal knots}, Handbook of knot
  theory, Elsevier B. V., Amsterdam, 2005, pp.~105--185. \MR{2179261}

\bibitem[EV18]{EtnVer18:LegSatellites}
John Etnyre and Vera V\'{e}rtesi, \emph{Legendrian satellites}, Int. Math. Res.
  Not. IMRN (2018), no.~23, 7241--7304. \MR{3883132}

\bibitem[FT97]{FucTab97:LegTransKnots}
Dmitry Fuchs and Serge Tabachnikov, \emph{Invariants of {L}egendrian and
  transverse knots in the standard contact space}, Topology \textbf{36} (1997),
  no.~5, 1025--1053. \MR{1445553}

\bibitem[Gei08]{Gei08:ContactBook}
Hansj\"{o}rg Geiges, \emph{An introduction to contact topology}, Cambridge
  Studies in Advanced Mathematics, vol. 109, Cambridge University Press,
  Cambridge, 2008. \MR{2397738}

\bibitem[GJ19]{GolJuh19:LOSSConc}
Marco Golla and Andr\'{a}s Juh\'{a}sz, \emph{Functoriality of the {EH} class
  and the {LOSS} invariant under {L}agrangian concordances}, Algebr. Geom.
  Topol. \textbf{19} (2019), no.~7, 3683--3699. \MR{4045364}

\bibitem[GSY22]{GuaSabYac22:LegSatellitesCob}
Roberta Guadagni, Joshua~M. Sabloff, and Matthew Yacavone, \emph{Legendrian
  satellites and decomposable cobordisms}, J. Knot Theory Ramifications
  \textbf{31} (2022), no.~13, Paper No. 2250071, 33. \MR{4523297}

\bibitem[K{\'a}l05]{Kal05:LCHpi1}
Tam\'{a}s K{\'a}lm\'{a}n, \emph{Contact homology and one parameter families of
  {L}egendrian knots}, Geom. Topol. \textbf{9} (2005), 2013--2078. \MR{2209366}

\bibitem[Li22]{Li22:LagCobSheaves}
Wenyuan Li, \emph{Lagrangian cobordism functor in microlocal sheaf theory},
  preprint, version 2, 2022,
  \href{http://arxiv.org/abs/2108.10914}{\texttt{arXiv:2108.10914}}.

\bibitem[LM]{LivMoo:KnotInfo}
Charles Livingston and Allison~H. Moore, \emph{Knot{I}nfo: {T}able of {K}not
  {I}nvariants}, accessed online on Jul 15, 2023.

\bibitem[MS05]{MelShr05:ChePolys}
Paul Melvin and Sumana Shrestha, \emph{The nonuniqueness of {C}hekanov
  polynomials of {L}egendrian knots}, Geom. Topol. \textbf{9} (2005),
  1221--1252. \MR{2174265}

\bibitem[Ng01]{Ng01:LegSatellites}
Lenhard~L. Ng, \emph{The {L}egendrian satellite construction}, preprint,
  version 1, 2001,
  \href{http://arxiv.org/abs/math/0112105}{\texttt{arXiv:math/0112105}}.

\bibitem[NT04]{NgTra04:LegTorusLinks}
Lenhard Ng and Lisa Traynor, \emph{Legendrian solid-torus links}, J. Symplectic
  Geom. \textbf{2} (2004), no.~3, 411--443. \MR{2131643}

\bibitem[OP12]{ODoPav12:LegGraphs}
Danielle O'Donnol and Elena Pavelescu, \emph{On {L}egendrian graphs}, Algebr.
  Geom. Topol. \textbf{12} (2012), no.~3, 1273--1299. \MR{2966686}

\bibitem[Pan17]{Pan17:LagCobAug}
Yu~Pan, \emph{The augmentation category map induced by exact {L}agrangian
  cobordisms}, Algebr. Geom. Topol. \textbf{17} (2017), no.~3, 1813--1870.
  \MR{3677941}

\bibitem[RS20]{RutSul20:CellularLCH1}
Dan Rutherford and Michael Sullivan, \emph{Cellular {L}egendrian contact
  homology for surfaces, part {I}}, Adv. Math. \textbf{374} (2020), 107348, 71.
  \MR{4133520}

\bibitem[Sab06]{Sab06:LCHDuality}
Joshua~M. Sabloff, \emph{Duality for {L}egendrian contact homology}, Geom.
  Topol. \textbf{10} (2006), 2351--2381. \MR{2284060}

\bibitem[Sab21]{Sab21:RulingsIntro}
\bysame, \emph{Ruling and augmentation invariants of {L}egendrian knots},
  Encyclopedia of knot theory, CRC Press, Boca Raton, FL, 2021, pp.~403--410.

\bibitem[Sar20]{Sar20:RibKh}
Sucharit Sarkar, \emph{Ribbon distance and {K}hovanov homology}, Algebr. Geom.
  Topol. \textbf{20} (2020), no.~2, 1041--1058. \MR{4092319}

\bibitem[Siv11]{Siv11:BorderedDGA}
Steven Sivek, \emph{A bordered {C}hekanov-{E}liashberg algebra}, J. Topol.
  \textbf{4} (2011), no.~1, 73--104. \MR{2783378}

\bibitem[ST13]{SabTra13:LagCobObstructions}
Joshua~M. Sabloff and Lisa Traynor, \emph{Obstructions to {L}agrangian
  cobordisms between {L}egendrians via generating families}, Algebr. Geom.
  Topol. \textbf{13} (2013), no.~5, 2733--2797. \MR{3116302}

\bibitem[STZ17]{SheTreZas17:LegKnotsSheaves}
Vivek Shende, David Treumann, and Eric Zaslow, \emph{Legendrian knots and
  constructible sheaves}, Invent. Math. \textbf{207} (2017), no.~3, 1031--1133.
  \MR{3608288}

\bibitem[SVW21]{SabVelWon21:MaxLeg}
Joshua~M. Sabloff, David~Shea Vela-Vick, and C.-M.~Michael Wong, \emph{Upper
  bounds for the {L}agrangian cobordism relation on {L}egendrian links},
  {A}lgebr.\ {G}eom.\ {T}opol., to appear, version 1, 2021,
  \href{http://arxiv.org/abs/2105.02390}{\texttt{arXiv:2105.02390}}.

\bibitem[Tra01]{Tra01:LegGenFunc}
Lisa Traynor, \emph{Generating function polynomials for {L}egendrian links},
  Geom. Topol. \textbf{5} (2001), 719--760. \MR{1871403}

\bibitem[Tra21]{Tra21:LegTransIntro}
\bysame, \emph{An introduction to the world of {L}egendrian and transverse
  knots}, Encyclopedia of knot theory, CRC Press, Boca Raton, FL, 2021,
  pp.~385--392.

\end{thebibliography}

% ****************************************

\end{document}